\theoremstyle{plain}
\newtheorem{theorem}{Theorem}\numberwithin{theorem}{section}
\newtheorem{main}{Main~Theorem}{}
\newtheorem{lemma}{Lemma}\numberwithin{lemma}{section}
\newtheorem{proposition}{Proposition}\numberwithin{proposition}{section}
\numberwithin{corollary}{section}
\theoremstyle{definition}
\newtheorem{definition}{Definition}\numberwithin{definition}{section}
\theoremstyle{remark} \theoremstyle{exam} \theoremstyle{ob}
\newtheorem{remark}{Remark}\numberwithin{remark}{section}
\numberwithin{notation}{section}
\numberwithin{example}{section}
\numberwithin{ob}{section}
\numberwithin{nt}{section}
\numberwithin{equation}{section}
\begin{document}
%\begin{frontmatter}
\title[Maximal Averages on  Variable Hyperplanes]
    {Annulus Maximal Averages   
   on Variable Hyperplanes}
    \author{ Joonil Kim }

    \address{Department of Mathematics\\
            Yonsei University  \\
            Seoul 121, Korea}
   \email{jikim7030@yonsei.ac.kr}

   \keywords{
      2000 Mathematics Subject Classification : 42B15, 42B30.}

 %  \subjclass[2000]{42B15, 42B30}
         %This study was supported (in part ) by the research
         %funds from Chosun University (2004). }
   %\subjclass{Primary: 42B15, 42B30}
   %\date{December 20, 200

\begin{abstract}
By giving   a thin width   $  \delta\ll 1$   to  a unit circle $S^1$, we set an annulus $S^1_{\delta}$  on the Euclidean plane $\mathbb{R}^2$. Consider the    maximal means $M^{\delta}_{S^1}$ over dilations of the annulus $S^1_{\delta}$. It is known that the operator norm of $M^\delta_{S^1}$   on  $L^2(\mathbb{R}^2)$ is $O(|\log 1/\delta|^{1/2})$. In this paper, we study the maximal operator $\mathcal{M}^\delta_{S^1(A)}$   over those annuli    now imbedded on the variable hyperplanes $(x,x_3)+\left\{\left(y, \langle A(x), y\rangle\right): y\in\mathbb{R}^2\right\}\subset \mathbb{R}^{2+1}$  where $A$ is a $2\times 2$ real matrix.
 The model hyperplane is the horizontal plane of the Heisenberg group $\mathbb{H}^1$  when  $A$ is given by the skew--symmetric matrix  $J$.    It turns out that a rank of matrix $JA+(JA)^T$     determines   $\|\mathcal{M}^\delta_{S^1  (A)} \|_{L^2(\mathbb{R}^{2+1})\rightarrow L^2(\mathbb{R}^{2+1})} $.   In the higher dimension $\mathbb{R}^{d+1}$,   the corresponding spherical maximal operator $\mathcal{M}_{S^{d-1}(A)}$ is bounded in $L^p(\mathbb{R}^{d+1})$ for $p>d/(d-1)$ if all eigenvalues of $A$ have nonzero imaginary parts.\end{abstract}
    \maketitle

\setcounter{tocdepth}{1}

\footnote{This research was supported by Basic Science Research Program through the National Research Foundation of Korea(NRF) funded by the Ministry of Education(2019R1H1A2039703)}

\tableofcontents

\section{Introduction} 
With a thickness $0<\delta<1$, we set an annulus $S^1_{\delta}=\{y\in \mathbb{R}^2:1-\delta/2\le |y|\le 1+\delta/2\}$, and  a tube $T_{\delta}=\{y\subset \mathbb{R}^2:|y_1|<1/2, |y_2|<\delta/2\}$. Let $R_\theta$ be a $2\times 2$ matrix of  rotation by an angle $\theta\in [0,2\pi]$. For a locally  integrable function $f\in L^1_{loc}(\mathbb{R}^2)$, we  consider  the   annulus maximal average  of $f$ over the dilations $tS^1_\delta$ of   the annulus $S^1_\delta$ given by
\begin{align}\label{95n}
M^\delta_{S^1} f(x)=\sup_{t>0} \frac{1}{|S^1_{\delta}|}  \int_{y\in S^1_{\delta}}|f(x- ty)|dy 
\end{align}
 and the tube  (Nikodym)  maximal average  of $f$ over the rotations $R_\theta  T_\delta$ of  the tube $T_\delta$,
 $$N^\delta_T f(x)=\sup_{\theta\in [0,2\pi]} \frac{1}{|T_{\delta}|}  \int_{y\in T_{\delta}}|f(x- R_\theta y)|dy.   $$
 The operator norms  of  the two maximal averages     have the same growth rate of $1/\delta$ as
\begin{align*}
\| N^\delta_T \|_{L^2(\mathbb{R}^2)\rightarrow L^2(\mathbb{R}^2) }= O(|\log (1/\delta)|^{1/2}) \ \text{and}\  \| M^\delta_{S^1} \|_{L^2(\mathbb{R}^2)\rightarrow L^2(\mathbb{R}^2) }= O(|\log (1/\delta)|^{1/2}).  
\end{align*}
 The former was obtained by C\'{o}rdoba \cite{Cordoba}  and the latter by Bourgain \cite{B1} and Schlag \cite{Shlag}.
\subsection{Maximal  Average along Variable Hyperplanes in $\mathbb{R}^{3}$} 
To each $(x,x_3)\in\mathbb{R}^{2+1}$ and a $2\times 2$ matrix $A$, we assign the  hyperplanes $ \pi_A(x,x_3)$ in $\mathbb{R}^{2+1}$ given by \begin{align}\label{400m}
 \pi_A(x,x_3):=(x,x_3)-\left\{\left(y,\langle A(x),y\rangle\right): y\in \mathbb{R}^2\right\}.  
\end{align}
We lift the above annuli and   tubes on the plane $\mathbb{R}^2\times\{0\}$ to   the hyperplane $ \pi_A(x,x_3)$.
Associated with each  region,    define the annulus maximal  average of $f\in L^1_{loc}(\mathbb{R}^{2+1})$ as
\begin{align}\label{0g}
\mathcal{M}^{\delta}_{S^1(A)} f(x,x_3)=\sup_{t>0} \frac{1}{|S^1_{\delta}|}  \int_{y\in S^1_{\delta}}|f(x- ty,x_3-\langle A(x), ty\rangle)|dy
\end{align}
and the tube (Nikodym) maximal average of $f\in L^1_{loc}(\mathbb{R}^{2+1})$ as
\begin{align}\label{08g}
\mathcal{N}^{\delta}_{T(A)}  f(x,x_3)=\sup_{\theta\in [0,2\pi]} \frac{1}{|T_{\delta}|}  \int_{y\in T_{\delta}}|f(x- R_\theta y,x_3-\langle A(x), R_\theta y\rangle)|dy.
\end{align}
The main purpose of this paper is to investigate  the operator norms of   $\mathcal{M}^{\delta}_{S^1(A)} $  on $L^2(\mathbb{R}^{2+1})$ according to $2\times 2$ matrices $A$, and revisit the corresponding known classification of  $ \mathcal{N}^{\delta}_{T(A)}  $ for a comparison with  $\mathcal{M}^{\delta}_{S^1(A)} $.  For this purpose, we define a notion of symmetric and skew-symmetric ranks of $2\times 2$ matrices. \begin{definition}\label{df11}
Let $M_{d\times d}(\mathbb{R})$ be the set of $d\times d$ real matrices. In $M_{2\times 2}(\mathbb{R})$,  we select the three matrices as
$$J=\left(\begin{matrix}0&1\\ -1&0\end{matrix}\right),\  I=\left(\begin{matrix}1&0\\0&1\end{matrix}\right) \ \text{and}\    I_c=  \left(\begin{matrix}1&c\\0&1\end{matrix}\right)  \ \text{ for $c\ne 0$ }.$$  Notice that  $J$, called the skew symmetric matrix,  rotates a vector  by the angle $\pi/2$  clockwise. \end{definition}

\begin{definition}[Symmetric  and Skew--Symmetric Ranks]\label{def12}
Let $A\in M_{2\times2}(\mathbb{R})$.   Then, we
 define the symmetric rank and the skew--symmetric rank of $A$ by $$\text{rank}\left(A+A^T\right)\ \ \text{and}\ \    \text{rank}\left(JA+(JA)^T\right).$$
Roughly speaking,   $ \text{rank}\left(JA+(JA)^T\right)$ measures  the extent to which $A$ is close to the skew--symmetric matrix $J$. For example,  $\text{rank}\left(JA+(JA)^T\right)=2$ for $A=J$, while 
$$\text{  $\text{rank}\left(JA+(JA)^T\right)=1$ for $A=I_c$  and  $\text{rank}\left(JA+(JA)^T\right)=0$ for $A=I$.}    $$    \end{definition}
 
Before stating the main result, we introduce a few  notations.
 Given  two scalars $F,G\ge 0$,  we write $F \lesssim G $   if $F \le CG $ for a   constant $C>0$ depending only on  a given matrix $A$ and dimension  $d$. The notation $F \approx G $ indicates $F \lesssim G $ and $G \lesssim F $. 
In particular, given $\delta>0$, we write $\|\cdot\| \lesssim_{\epsilon} \left(\frac{1}{\delta}\right)^{s}$ if  for an arbitrary   small $\epsilon>0$, there exists $C_\epsilon>0$ such that $\|\cdot\|\le C_\epsilon \delta^{-\epsilon}\delta^{-s}$. Moreover,  we write  $\| \cdot\|\approx_{\epsilon}  \left(\frac{1}{\delta}\right)^{s}$ if  for an arbitrary   small $\epsilon>0$, there exists $C_\epsilon>0$ and $C>0$ such that  $ \delta^{-s}/C\le  \|\cdot\| \le C_\epsilon\delta^{-\epsilon}\delta^{-s}  $. 
  
\begin{main}[Annulus  Maximal Function] \label{main1}
Suppose that $A\in M_{2\times 2}(\mathbb{R})$.
\begin{itemize}
\item Let  $\text{rank}(A)=2$. Then it holds that
\begin{itemize}
\item[(1)]   if $\text{rank} (JA+(JA)^T )=2$, then $ \|\mathcal{M}^{\delta}_{S^1(A)}  \|_{L^2(\mathbb{R}^3)\rightarrow L^2(\mathbb{R}^3) }\approx_{\epsilon}   (\frac{1}{\delta} )^0$,
\item[(2)]  if $\text{rank} (JA+(JA)^T )=1$, then  $ \|\mathcal{M}^{\delta}_{S^1(A)} \|_{L^2(\mathbb{R}^3)\rightarrow L^2(\mathbb{R}^3) }\approx_{\epsilon}    (\frac{1}{\delta} )^{1/6},$
\item[(3)] if $\text{rank}  (JA+(JA)^T )=0$, then $  \|\mathcal{M}^{\delta}_{S^1(A)}  \|_{L^2(\mathbb{R}^3)\rightarrow L^2(\mathbb{R}^3) }\approx_{\epsilon}  (\frac{1}{\delta} )^{1/2}$.
\end{itemize}
\item Let  $\text{rank}(A)=1$, then  $ \|\mathcal{M}^{\delta}_{S^1(A)} \|_{L^2(\mathbb{R}^3)\rightarrow L^2(\mathbb{R}^3) }\approx_{\epsilon}  (\frac{1}{\delta} )^0$. 
\item
Let $\text{rank}(A)=0$, then   $ \| \mathcal{M}^{\delta}_{S^1(A)}  \|_{L^2(\mathbb{R}^3)\rightarrow L^2(\mathbb{R}^3) }\approx_{\epsilon}  (\frac{1}{\delta} )^0$ (Euclidean case).
\end{itemize}
\end{main}
  Let $A$ be invertible. Then   Main Theorem \ref{main1} states that    $\mathcal{M}^\delta_{S^1(A)}$ has the best bound   $\approx_{\epsilon}\delta^{-0}$  when $\text{rank}\left(JA+(JA)^T\right)=2$ whereas  $\mathcal{M}^\delta_{S^1(A)}$  has the worst bound $\approx_\epsilon\delta^{-1/2}$ when  $\text{rank}\left(JA+(JA)^T\right)=0$ (occurs exactly when $A=cI$). 
  \subsection{Comparison with Nikodym Maximal functions}
We compare the annulus maximal operator $\mathcal{M}^\delta_{S^1(A)}$ with  the tube maximal operator $\mathcal{N}^\delta_{T(A)}$ of (\ref{08g}) from the author's previous results  of \cite{K1,K2}. 
 \begin{theorem}[Nikodym Maximal Functions in  \cite{K2}]\label{1tt}
Suppose that $A\in M_{2\times 2}(\mathbb{R})$.
\begin{itemize}
\item Let  $\text{rank}(A)=2$. Then it holds that
\begin{itemize}
\item[(1)] if $\text{rank}\left(A+A^T\right)=2$, then $ \| \mathcal{N}^{\delta}_{T(A)}   \|_{L^2(\mathbb{R}^3)\rightarrow L^2(\mathbb{R}^3) }\approx_{\epsilon}  (\frac{1}{\delta} )^0$,
\item[(2)]  if $\text{rank}\left(A+A^T\right)=1$, then  $ \| \mathcal{N}^{\delta}_{T(A)}   \|_{L^2(\mathbb{R}^3)\rightarrow L^2(\mathbb{R}^3) }\approx_{\epsilon}   \ (\frac{1}{\delta}  )^{1/6},$
\item[(3)]  if $\text{rank}\left(A+A^T\right)=0$, then $   \| \mathcal{N}^{\delta}_{T(A)}  \|_{L^2(\mathbb{R}^3)\rightarrow L^2(\mathbb{R}^3) }\approx_{\epsilon}  (\frac{1}{\delta} )^{1/4}$.
\end{itemize}
\item  Let  $\text{rank}(A)=1$. Then  $ \| \mathcal{N}^{\delta}_{T(A)}   \|_{L^2(\mathbb{R}^3)\rightarrow L^2(\mathbb{R}^3) }\approx_{\epsilon}   (\frac{1}{\delta} )^0$.
\item
Let $\text{rank}(A)=0$.  Then   $  \| \mathcal{N}^{\delta}_{T(A)}   \|_{L^2(\mathbb{R}^3)\rightarrow L^2(\mathbb{R}^3) }\approx_{\epsilon}  (\frac{1}{\delta} )^0$ (Euclidean case).
\end{itemize}

\end{theorem}
\begin{remark}
   In \cite{K2},  the above theorem is stated in terms of $D:=(a_{12}+a_{21})^2-4a_{11}a_{22}=\det\left(A+A^T\right) $ and $(a_{11},a_{22})$ if $A=\left(\begin{matrix}a_{11}&a_{12}\\ a_{21}&a_{22}\end{matrix}\right)$ is not symmetric. The symmetric case $a_{12}=a_{21}$ is reduced to the Euclidean case  
 $ \| \mathcal{N}^{\delta}_{T(A)}   \|_{L^2(\mathbb{R}^3)\rightarrow L^2(\mathbb{R}^3) }\approx_\epsilon \delta^{-0} $, which corresponds to    $\rm{rank}(A)\le 1$ in the  theorem \ref{1tt}. In \cite{K2},   the line segments $\{(t,k\delta t):t\in [-1/2,1/2]\}$ for $k=1,\cdots,[1/\delta]$ are treated   rather than the  rectangles $\{R_\theta y:y\in  T_{\delta}\}$ in (\ref{08g}). \end{remark}
Theorem \ref{1tt} states that
$\mathcal{N}^{\delta}_{T(A)} $ has the best bound $\approx_\epsilon \delta^{-0}$ if   $\rm{rank}(A+A^T)=2$, however $\mathcal{N}^{\delta}_{T(A)} $ has the worst bound $\approx_\epsilon \delta^{-1/4}$ when $\rm{rank}(A+A^T)=0$ (occurs exactly when $A=cJ$). 
The novelty of this paper is to demonstrate  that
   the skew--symmetricity measured from $\rm{rank}(JA+(JA)^T)$  or  symmetricity from $\rm{rank}(A+A^T)$   plays a strikingly opposite role  between  annuli and tubes on the variable planes $\pi_{A}(x,x_3)$   in order to classify the operator norms of $\mathcal{M}^\delta_{S^1(A)}$ and $\mathcal{N}^{\delta}_{S^1(A)}$. 
  \\
  \\
\noindent
{\bf Notations}.   Given two vectors $u$ and $v$ in $\mathbb{R}^d$, we write 
$v=u+O(\rho)$ if there exists $C>0$ independent of $u$, $v$, and $\rho$ such that
$
|v-u|\le C\rho.
$
For every  $m\in\mathbb{Z}_+$, we frequently use the  smooth  cutoff functions:
 \begin{itemize}
 \item[(1)] $\psi$ supported in $\{u:|u|\le 1\}\subset\mathbb{R}^m$ with $\psi(u)\equiv 1$ in $|u|<1/2$,
 \item[(2)]  $\chi$  supported in $\{ u:1/2\le |u|\le 2\}\subset \mathbb{R}^m$ 
  \end{itemize}
where we  allow   slight  changes of $\chi$ and $\psi$  line by line.  We denote the phase functions  by $\Phi,\phi$, and  the integral kernels by $K,L,\Psi$, which can be different in cases.   Finally, our positive constants $c$ and $C$ can be also different   line by line.

\subsection{The Rotational Curvature and the Heisenberg group} 
Given $A\in M_{d\times d}(\mathbb{R})$ and $(x,x_{d+1})\in\mathbb{R}^{d+1}$,  set  the   hyperplanes whose normal vector depending on $x$ as
\begin{align}\label{u88}
\pi_A(x,x_{d+1}) =(x,x_{d+1})- \{\left(y,\langle A(x), y\rangle\right):y\in\mathbb{R}^d \} \subset \mathbb{R}^{d+1}
\end{align}
  as in (\ref{400m}).  
Consider the average of $f$ over a ball embedded in the plan $\pi_A(x,x_{d+1}) $  given by
$$\mathcal{A}_{\pi_A}(f)(x,x_{d+1})=\int_{ \mathbb{R}^{d}} f(x-y,x_{d+1}-\langle A(x),y\rangle) \psi(y)dy.$$
If $A$ is invertible, the smoothing effect of the average   from the variable planes  $\pi_A $,   measured by
$$\|\mathcal{A}_{\pi_A}\|_{L^2_{\alpha}(\mathbb{R}^{d+1})\rightarrow L^2(\mathbb{R}^{d+1})} \lesssim 1\ \text{for $\alpha\ge -\frac{d}{2}$, } $$
is due to
 $\det(A)\ne 0$ which is the  rotational curvature developed by Phong and  Stein 
in 1980s.   They  used   the concept of the rotational curvature for establishing the $L^p$ theory of the singular Radon transforms and generalized Radon transforms \cite{PS}.  This was  preceded by the model case study  of the horizontal plane (\ref{u88}) with $A=J$ of the Heisenberg group $\mathbb{H}^n$  by  Geller and  Stein \cite{GeSt}.  These   effects of the curvature arising from the $x$-side were culminated in the study of  the maximal average on the variable hyper-surfaces  constructed by   Sogge  and  Stein \cite{Sogge,SS1}.  Their theory covers the maximal averages associated with the variable surfaces of co-dimension one.       
To study the maximal average along the surfaces of co-dimension two,   we
 consider the one-parameter family of the $(d-1)$-dimensional surfaces $(x,x_{d+1})- t\{ \left(y,\langle A(x), y\rangle\right): y\in  S^{d-1}\}$  in $ \mathbb{R}^{d+1}$ with a parameter $t\in\mathbb{R}_+$.
Over these surfaces, we  set the average of $f\in L^1_{loc}(\mathbb{R}^{d+1})$   as
 \begin{align}\label{47kg}
 \mathcal{A}_{S^{d-1}(A)}(f) (x,x_{d+1},t):=\int_{y\in S^{d-1}}  f\left(x-ty, x_{d+1}- \langle A(x),ty\rangle \right) d\sigma(y) 
 \end{align}
where $d\sigma $  is the measure  on  the unit sphere $S^{d-1}$, and define the maximal average over all $t>0$ as
 \begin{align}\label{nsc12}
  \mathcal{M}_{S^{d-1}(A)} f(x,x_{d+1}):=\sup_{t>0} \mathcal{A}_{S^{d-1}(A)}(f)(x,x_{d+1},t). 
  \end{align}
Let $d=2n$ and $A=J:=\left(\begin{matrix}0& I\\-I&0\end{matrix}\right)\in M_{2n\times 2n}(\mathbb{R})$. Take  a dilated measure
  $d\sigma_t=  d   \sigma(\cdot /t)/t^{2n}$     and   a Dirac mass  $\delta_{2n+1}$  at $0\in\mathbb{R}$ (in the last coordinate).  Then  the above average $ \mathcal{A}_{S^{2n-1}(J)}(f) (x,x_{2n+1},t)$ for each fixed $t>0$ is the  group convolution of a function $f$ and the surface carried measure  $d\sigma_t\otimes \delta_{2n+1}$ supported on the horizontal plane  $ \mathbb{R}^{2n}\times\{0\}$ of the Heisenberg group $\mathbb{H}^n$:
\begin{align}\label{69kg}
\mathcal{A}_{S^{2n-1}(J)}(f) (x,x_{2n+1},t)=f*_{J}\left(d\sigma_t\otimes \delta_{2n+1}\right)(x,x_{2n+1}). 
\end{align}
    Here $\mathbb{H}^n\cong \mathbb{R}^{2n+1}$ is the $2n+1$ dimensional Heisenberg group endowed with the following group law:
$$(x,x_{2n+1})\cdot (y,y_{2n+1})=\left(x+y, x_{2n+1}+y_{2n+1}+ \langle J(x), y\rangle\right).$$
In 1997,  Nevo   and  Thangavelu   \cite{NT}   initiated a study  on  the maximal average of (\ref{69kg}) for $A=J$ in (\ref{nsc12}) and obtained the maximal and pointwise ergodic theorems for the radial average  on the Heisenberg group $\mathbb{H}^n$ with $n\ge 2$. 
  In 2004,  M\"{u}ller and  Seeger   \cite{MS} proved that, for $n\ge 2$,
\begin{align}\label{nsc112}
\|  \mathcal{M}_{S^{2n-1}(J)}& f\|_{L^p(\mathbb{H}^n)} \le C \|f\|_{L^p(\mathbb{H}^n)}\ \text{for all $f\in L^p(\mathbb{H}^n)$}\nonumber\\
& \text{ if and only if}\   \frac{2n}{2n-1} <p\le \infty 
\end{align}
 by observing that the phase function of the corresponding Fourier integral operators satisfies the two sided fold singularities of  \cite{GS2}.  Indeed, Muller and Seeger in \cite{MS} obtained the result for a   class of operators defined on
M\'etivier groups,  whose last components (multi-dimension) contain the bilinear forms induced from skew symmetric matrices.  
   In the same year,  Narayanan and  Thangavelu  \cite{NT2}  proved  (\ref{nsc112}) by using the spectral theory of the Heisenberg group for $n\ge 2$.   Recently,
 Anderson,  Cladek,  Pramanik and   Seeger   \cite{ACPS}  considered  the  maximal average 
 \begin{align*} 
\sup_{t>0} \int_{y\in S^{2n-1}}  f\left(x-ty, x_{2n+1}-t^2\Lambda(y)- \langle J(x),ty\rangle \right) d\sigma(y) 
 \end{align*}
where  $\Lambda$ is a linear functional
 $\Lambda:\mathbb{R}^{2n}\rightarrow \mathbb{R} $ and  obtained its   $L^p(\mathbb{H}^n)$ boundedness for the same range of $p$  as in (\ref{nsc112}), even though  the dilates  of the surface measure is no longer supported in a fixed hyperplane.   For the case $n=1$, the  boundedness of
 $\mathcal{M}_{S^1(J)}$ on $L^p(\mathbb{H}^1)$  has been  an  open problem   since it was conjectured for the range $p>2 $  in  \cite{NT}.  However, more recently, Beltran, Guo, Hickman and Seeger \cite{BGHS} showed that  $\| \mathcal{M}_{S^1(J)}f\|_{L^P(\mathbb{H}^1)}\le C\|f\|_{L^p(\mathbb{H}^1)}$ with $p>2$ for all  functions $f\in L^p(\mathbb{H}^1)$  satisfying    $f(\cdot,x_3)$ is radial in $\mathbb{R}^2$ for each $x_3\in\mathbb{R}$.
 
 In this paper,  we investigate   the general matrices $A\in M_{d\times d}(\mathbb{R})$  rather than the skew symmetric matrix $J\in M_{2n\times 2n}(\mathbb{R})$ of the Heisenberg group $\mathbb{H}^n$ for  the  $L^p(\mathbb{R}^{d+1})$ boundedness of $\mathcal{M}_{S^{d-1}(A)}$ of  (\ref{nsc12}).  \begin{main}\label{main2}
  Let $d\ge 3$. Suppose that all   eigenvalues of $A\in M_{d\times d}(\mathbb{R})$ have nonzero imaginary parts. Then there exists $C>0$ such that
  \begin{align*} 
&\|  \mathcal{M}_{S^{d-1}(A)} f\|_{L^p(\mathbb{R}^{d+1})}\le C \|f\|_{L^p(\mathbb{R}^{d+1})}\ \text{for all $f\in L^p(\mathbb{R}^{d+1})$}\\
&\qquad\qquad\qquad\qquad \text{if and only if}\   \frac{d}{d-1}<p\le \infty. 
\end{align*}  
Notice that any non-degenerate skew symmetric $d\times d$ matrix satisfies the above hypothesis.   \end{main}
 
\subsection{Homogeneous Group}
To each $A=(a_{ij})\in M_{d\times d}(\mathbb{R}) $, we   assign a  Lie group $\mathbb{G}=\mathbb{G}^{d+1}(A)$  identified with $\mathbb{R}^{d+1}$ endowed with the group multiplication
$$(x,x_{d+1})\cdot (y,y_{d+1})=\left(x+y,x_{d+1}+y_{d+1}+\langle A(x),y\rangle\right)$$
with its Lie algebra $\mathcal{G}=\mathcal{G}^{d+1}(A)$ generated by the basis $\{X_1,\cdots,X_d,X_{d+1}\}$ with 
$X_i=\partial/\partial x_i+(a_{i1}x_1+\cdots +a_{id}x_d)\partial/\partial_{x_{d+1}}$ for $i=1,\cdots,d$ and $X_{d+1}=\partial/\partial x_{d+1}$. They satisfy the commutator relation $[X_i,X_j]=(a_{ji}-a_{ij})X_{d+1}$ with the other commutators vanished. So, for any matrix $A$, it holds that  $[[ \mathcal{G},\mathcal{G}], \mathcal{G}]=0$, i.e.,  the step of   $\mathcal{G}$ is at most 2.  Thus $\mathbb{G}$ is a two step  Nilpotent Lie group if $A\ne A^T$. Indeed, $\mathbb{G}$ is abelian if and only if $A^{T}=A$. For example, the Heisenberg group  $\mathbb{H}^n$ is  the non-abelian group $ \mathbb{G}^{2n+1}(J)$ as $J^T=-J$ while $\mathbb{G}^{2n+1}(I)$ is the abelian group.
On the other hand, the group $\mathbb{G}$ has the inverse element of $(y,y_{d+1})$ given by
 $$(y,y_{d+1})_A^{-1}=\left(-y,-y_{d+1}+\langle A(y),y\rangle\right).$$ 
So, we define the group convolution of $f*_{A}g$ for two   functions $f$ and $g$ in $L^1(\mathbb{G}^{d+1}(A))$  
as
\begin{align*}
f*_{A}g(x,x_{d+1})&=\int f\left(x,x_{d+1})\cdot (y,y_{d+1})_A^{-1}\right) g(y,y_{d+1})dydy_{d+1}\\
&=\int f(x-y,x_{d+1}-y_{d+1}-\langle A(x-y),y\rangle)g(y,y_{d+1})dydy_{d+1}.
\end{align*}
Here $dydy_{d+1}$ is the Euclidean measure, which can be regarded as the Haar measure (both left and right invariant with respect to the group multiplication).
We set a simpler bilinear operation  of two functions $f$ and $g$  rather than $f*_A g$ as
$$f \cdot_A g(x,x_{d+1}):=\int f(x-y,x_{d+1}-y_{d+1}-\langle A(x),y\rangle)g(y,y_{d+1})dydy_{d+1}.$$ 
Consider $ [d\sigma_t \otimes \delta_{d+1} ]$ where $d\sigma_t$ is the  measure along the sphere of radius $t$ in $\mathbb{R}^{d}$ and $\delta_{d+1}$ is a Dirac mass along the last coordinate. Then   $\mathcal{A}_{S^{d-1}(A)}f(\cdot,t)=f\cdot_A[d\sigma_t \otimes \delta_{d+1}]$ in (\ref{47kg}).
 By defining the coordinate change  $\mathcal{U}_Af(x,x_{d+1})=f(x,x_{d+1}+\langle A(x),x\rangle)$,   we check that
\begin{align}\label{0ch}
f*_{A}g=\mathcal{U}_{-A}(\mathcal{U}_Af \cdot_{-A^T} g)\ \text{equivalently}, f\cdot_A g=\mathcal{U}_{-A}(\mathcal{U}_{A}f*_{-A^T}g).
\end{align}   
Thus, the second  identity of  (\ref{0ch})  for $g= [d\sigma_t \otimes \delta_{d+1} ]$ implies 
$$\mathcal{A}_{S^{d-1}(A)}f(\cdot,t)=\mathcal{U}_{-A}(\mathcal{U}_Af*_{-A^T}  [d\sigma_t\otimes \delta_{d+1}]  ).   $$ 
Therefore, we can  regard the average operator $\mathcal{A}_{S^{d-1}(A)}f(\cdot,t)$ in (\ref{47kg}) as the group convolution of a function $f$ and a measure $[d\sigma_t(\cdot )\otimes \delta_{d+1}(\cdot)]$ in the group $\mathbb{G}^{d+1}(-A^T)$.   \subsection{Oscillatory Integral Operators}
Let $A\in M_{d\times d}(\mathbb{R})$ and $j\in \mathbb{Z}_+$. Given  $\lambda\in \mathbb{R} $, we consider an oscillatory integral operator $\mathcal{T}^\lambda_j$ mapping $g\in L^2(\mathbb{R}^d)$ to $ \mathcal{T}^{\lambda}_j g\in L^2(\mathbb{R}^d\times \mathbb{R})$  defined by
 \begin{align}\label{5rpp}
\mathcal{T}_{j}^{\lambda}g (x,t )&= \lambda^{d/2}  \chi\left(t\right)  \int_{\mathbb{R}^d}
e^{2\pi i\lambda[\langle x,\xi\rangle+t| \xi+A(x)|]}
\chi\left(\frac{\lambda t|\xi+A(x)|}{2^j}\right) \widehat{g}(  \xi)d\xi. \end{align}
If $A$ is  invertible and $2^j=\lambda$,  we use a change of variable $x\rightarrow A^{-1}(x)$.  Next we replace (rename) $A^{-1}$  in the phase function by $A$ so that  the phase function   becomes $$\phi(x,t,\xi)=\langle A(x), \xi\rangle+t|\xi+x|.$$ Next, we also  localize $x$ with $\psi(x)$ and   set
\begin{align}\label{apl}
 \mathcal{T}_{\rm{annulus}}^\lambda g(x,t)= \lambda^{d/2}  \chi\left(t\right)\psi\left(x\right) \int_{\mathbb{R}^d}
e^{2\pi i\lambda\phi(x,t,\xi)}
\chi\left(  t|\xi+x| \right) \widehat{g}(  \xi)d\xi.
\end{align}
Then we   reduce the estimate of  the maximal average to  that of the oscillatory integral operators as it states in Main Theorem \ref{main3}. 
\begin{main}\label{main3}
Let $A\in M_{d\times d}(\mathbb{R})$ and $j\in\mathbb{Z}_+$. Suppose    that  
\begin{align}\label{s100}
 \|\mathcal{T}_{j}^{\lambda}\|_{L^2(\mathbb{R}^d)\rightarrow L^2(\mathbb{R}^d\times \mathbb{R})}\lesssim_{\epsilon}   2^{ c(A) j/2} \ \text{uniformly in $\lambda$}.
\end{align}
If $d\ge 3$ in (\ref{s100}) with $d-c(A)>2$, then there exists $C>0$ such that
\begin{align}\label{s81}
\left\| \mathcal{M}_{S^{d-1}(A)}\right\|_{L^p(\mathbb{R}^{d+1})\rightarrow L^p( \mathbb{R}^{d+1})}\le C\ \text{for $p>\frac{(d- c(A))}{(d- c(A))-1}$.} 
\end{align}
If $d=2$ in (\ref{s100}), then it holds that \begin{align}\label{s82}
\left\| \mathcal{M}^{\delta}_{S^1(A)}  \right\|_{L^2(\mathbb{R}^3)\rightarrow L^2(\mathbb{R}^3) }\lesssim_{\epsilon}\left(\frac{1}{\delta}\right)^{c(A)/2}.
\end{align}
   If $A$ is invertible, the hypothesis    (\ref{s100}) can be reduced to the case   $2^j=\lambda$, i.e., 
\begin{align}\label{s83}
  \|\mathcal{T}_{\rm{annulus}}^{\lambda}\|_{L^2(\mathbb{R}^d)\rightarrow L^2(\mathbb{R}^d\times \mathbb{R})}\lesssim_{\epsilon} \lambda^{ c(A)/2} \ \text{ for    $\lambda\ge 1$. }
  \end{align}
   
 \end{main}

Main Theorem \ref{main3} tells that  (\ref{s100}) (or (\ref{s83}) when $A$ invertible) gives
\begin{itemize}
\item The  upper bounds of the main theorem 1  with  the exponents
\begin{align}\label{314a}
c(A)/2=\begin{cases} 0\ \text{if $\text{rank}\left(JA+(JA)^T\right)=2$ for $\text{rank}(A)=2$}\\
1/6\ \text{if $\text{rank}\left(JA+(JA)^T\right)=1$ and $\text{rank}(A)=2$}\\
1/2\ \text{if $\text{rank}\left(JA+(JA)^T\right)=0$ and $\text{rank}(A)=2$}\\
0\ \text{if $\text{rank}(A)=1$.}
\end{cases}
\end{align}
\item  The upper bound of the main theorem 2  with the exponent $c(A)/2=0$.
\end{itemize}
\begin{main}\label{main4}[Regularity and Local smoothing in $L^2$]\label{main4}
Let $A$ be $d\times d$ invertible matrix. Suppose that   
\begin{align}\label{4899}
\|\mathcal{T}_{\rm{annulus}}^\lambda\|_{L^2(\mathbb{R}^{d+1})\rightarrow L^2(\mathbb{R}^{d+1}\times \mathbb{R})}\lesssim_{\epsilon}  \lambda^{c(A)/2}\ \text{for $c(A)/2\ge 0$}.
\end{align}
Then  it holds that
\begin{align}\label{4880}
\left\| \mathcal{A}_{S^{d-1}(A)} \right\|_{L^2_{ \alpha}(\mathbb{R}^{d+1})   \rightarrow L^2(\mathbb{R}^{d+1} \times [1,2])  }  \le  C
\text{ for   $ -(d-1)/2+c(A)/2<\alpha $}.
\end{align}
This implies that (\ref{4880}) holds true for each $c(A)/2$ in (\ref{314a}). In general (\ref{4880}), without   time integral over $[1,2]$,  fails to hold, namely, there exists $A\in M_{d\times d}(\mathbb{R})$ and the range $   -(d-1)/2+c(A)/2< \alpha<\alpha_0$ so that  \begin{align}\label{448}
f\rightarrow   \mathcal{A}_{S^{d-1}(A)}f(\cdot,1)\ \text{is unbounded from $L^2_{ \alpha}(\mathbb{R}^{d+1})$ to $L^2 (\mathbb{R}^{d+1})$}.\end{align}
This means that the integral with respect to $t\in [1,2]$ induces  a nontrivial  $L^2$-regularity   (\ref{4880}) for all  $ -(d-1)/2+c(A)/2< \alpha<\alpha_0$. 
  We shall manifest this observation for the case $d=2$ and $A=\left(\begin{matrix}0&1\\ 1&0\end{matrix}\right)$ in Section \ref{Sec91}.
 \end{main}
Our paper is orgainized as it follows. 
\\
{\bf Organization}.
In Section 2, we  shall classify the matrices   of $M_{2\times 2}$ according to the skew symmetric rank given by $\text{rank}\left(JA+(JA)^T\right)$.  In Section 3, we    discuss  why $JA+(JA)^T$ appears in the proof of  Main Theorem \ref{main1}, and   compare this with $A+A^T$ arising from the Nikodym maximal case.  In Sections \ref{sec3} and \ref{sec5}, we   prove Main Theorem \ref{main3}.
In Sections \ref{secc6} through \ref{sec90}, we  establish
   the   estimates of  (\ref{s100}) and (\ref{s83})  to show the sufficient parts of Main Theorems 1 and 2. In Section \ref{Sec91}, we prove Main Theorem \ref{main4}.  In Section \ref{sec9}, we obtain the lower bounds of Main Theorem \ref{main1}. 
  In the appendix, we revisit some basic estimates (maximal theorem, dilation invariance, orthogonality from cancellation) for the average operators associated with the bilinear forms $B(x,y)=\langle A(x),y\rangle$.
\section{Skew--Symmetric Ranks }\label{sec2}
\subsection{Properties of Skew--Symmetric  Ranks}
\begin{lemma}\label{lem21}
Let $A\in M_{d\times d}(\mathbb{R})$. The rank of the matrix $ JA+(JA)^T $ is invariant under a switch  between (i)  $JA$ and $AJ$, (ii) $A$ and $A^T$,  (iii) $A$ and $A^{-1}$ (when $A$ is invertible),  (iv) $A$ and $QAQ^T$ (for an orthogonal matrix $Q$), and (v) $A$ and $cA$ with $c\ne 0$.   
 \end{lemma}

\begin{proof}[Proof of Lemma \ref{lem21}]
  By $JJ^T=J^TJ=I$ and $J^T=-J$,  
we obtain (i) and (ii) from $$ JA+(JA)^T=J (AJ+(AJ)^T)J^T\ \text{and}\ JA+(JA)^T=-(A^TJ+(A^TJ)^T).$$
 We next obtain (iii)  by inserting $AJ$ into $A$ below
  \begin{align}\label{ss44}
(A+A^T)(A^{-1})= (A^T)( (A^{-1})^{T}+A^{-1})  
\end{align}
 and obtain (iv)   from $  Q[JA+(JA)^T] Q^T =\pm (J(QAQ^T)+[J(QAQ^T)]^T) $ due to     $QJ=\pm JQ$.  Finally (v) follows from $(cA)^T=cA^T$.
 \end{proof}

\begin{proposition}\label{lem10044}
Let $A\in M_{2\times 2}(\mathbb{R})$ be an invertible matrix. Then it holds that \begin{itemize}
\item[(1-1)] $\text{rank}\left(JA+(JA)^T\right)=2$  if and only if   $A$ has two different eigenvalues.
\item[(1-2)] $\text{rank}\left(JA+(JA)^T\right)=1$   if and only if     there is   $r\ne 0$ and an orthogonal matrix $Q$ such that 
 $\text{$rA=Q^T \left(\begin{matrix}r\lambda&1\\ 0&r\lambda\end{matrix}\right)    Q$   for a nozero eigenvalue $\lambda $ of $A$.}$    Here 1 can be located at the opposite side. 
 \item[(1-3)] $\text{rank}\left(JA+(JA)^T\right)=0$   if and only if   $A =cI$ for an identity matrix $I$ and $c\ne 0$.
\end{itemize}
 \end{proposition} 
 
To prove Proposition \ref{lem10044} for $A=\left(\begin{matrix}a_{11}&a_{12}\\
a_{21}&a_{22}
\end{matrix}\right)$, we observe that 
\begin{align}\label{78kg}
\det\left(JA+(JA)^T\right)&=\det\left(\begin{matrix}-2a_{21}& a_{11}-a_{22}\\
 a_{11}-a_{22}& 2a_{12}
\end{matrix}\right) \\
&=4(a_{11}a_{22}-a_{12}a_{21})-(a_{22}+a_{11})^2 \nonumber\end{align}
which appears   inside of the square root in the following formula of   an eigenvalue  $\lambda$  of $A$,
\begin{align}\label{1001kg}
 \lambda= \frac{(a_{11}+a_{22})\pm \sqrt{-[4(a_{11}a_{22}-a_{12}a_{21}) -(a_{22}+a_{11})^2]   }  }{2}. \end{align} 
 \begin{proof}[Proof of (1-1)]
By  (\ref{78kg}) and (\ref{1001kg}),  it holds that $\det(JA+(JA)^T)=0$ if and only if $A$ has  an eigenvalue $\lambda$ of the algebraic multiplicity 2. This shows (1-1) of Proposition \ref{lem10044}.
 \end{proof}
 \begin{proof}[Proof of (1-3)]
Let $A$ be a nonzero matrix.  Then it  holds that   $JA+(JA)^T={\bf 0}$ in (\ref{78kg}) if and only if $a_{12}=a_{21}= 0$ and $a_{11}=a_{22}\ne 0$,  namely $A=cI$ with $c\ne 0$. This shows (1-3) of Proposition \ref{lem10044}
\end{proof}
\begin{proof}[Proof of (1-2)]
Let $\mathcal{E}(\lambda)$ be the eigenspace associated with an eigenvalue $\lambda$. Note  that from   (1-1) and (1-3), 
 the condition $\text{rank}\left(JA+(JA)^T\right)=1$ and $\rm{rank}(A)=2$ implies that
 $A$ has an eigenvalue $\lambda\ne 0$ of algebraic multiplicity 2 with    
  $\text{dim}(\mathcal{E}(\lambda))=1$ because an algebraic multiplicity 2 with $ \text{dim}(\mathcal{E}(\lambda))=2$ happens only if $A=cI$ at (1-3).     Let $\lambda$ be such an eigenvalue. Thus 
$\left(\begin{matrix}a_{11}-\lambda &a_{12}\\
a_{21}&a_{22}-\lambda
\end{matrix}\right)= A-\lambda I =\left(\begin{matrix}\alpha_1&\alpha_2\\
k\alpha_1&k\alpha_2
\end{matrix}\right)$ where $k=-\alpha_1/\alpha_2$ because 
\begin{align}\label{2323}
\alpha_1+k\alpha_2=a_{11}+a_{22}-2\lambda=0 
\end{align}
 from 
$ 4(a_{11}a_{22}-a_{12}a_{21})-(a_{22}+a_{11})^2=\det(JA+(JA)^T)=0$ in (\ref{1001kg}).  At first, take  $v = (v_1,kv_1) \in \mathbb{R}^2$ with $v_1=\frac{1}{1+k^2}$. Then as the rows of  $A-\lambda I$  are parallel to $(\alpha_1,\alpha_2)$, \begin{align}\label{ph00}
|v|=1\ \text{and}\  r(A-\lambda I )v=0\ \text{ for any $r\ne0$}  
\end{align}
For this fixed vector $v= (v_1,kv_1)$ chosen in (\ref{ph00}),  let us next find a real number $r\ne 0$ and $w\in \mathbb{R}^2$ such that 
\begin{align}\label{ph01}
\text{$r(A-\lambda I)w=v$, $w\cdot v=0$, and $|w|=1$}.
\end{align}
\begin{proof}[Proof of (\ref{ph01})]
Let $ v= (v_1,kv_1)$ satisfying (\ref{ph00}).  Obviously, we can first find  $w=(w_1,w_2)$ satisfying  $w\perp v=0$  with $|w|=1$. Since $(\alpha_1,\alpha_2)\perp v$ (due to $\alpha_1+k\alpha_2=0$ in the above), it holds that $w$ and $(\alpha_1,\alpha_2)$ are parallel. Thus   $w\cdot (\alpha_1,\alpha_2)\ne 0$.
Next, given such $w$, we can take $r\ne 0$ satisfying
 $r(w_1 \alpha_1+w_2\alpha_2)=v_1$, namely, $r(A-\lambda I)w=r\left(\begin{matrix}\alpha_1&\alpha_2\\
k\alpha_1&k\alpha_2
\end{matrix}\right) \left(\begin{matrix}w_1\\
w_2
\end{matrix}\right)= v$.   This implies (\ref{ph01}).
 \end{proof}
From (\ref{ph00}) and (\ref{ph01}),  set the orthogonal matrix $Q=[v,w]$ satisfying the Jordan form $ [r(A-\lambda I)]Q=  [0,v]=Q\left(\begin{matrix}0 &1\\ 0&0\end{matrix}\right)   $ where $Q^{-1}=Q^T$ due to   $w\cdot v=0$ and $|v|=|w|=1$.  Therefore, we proved $\Rightarrow$ of (1-2). Finally, check $JA+(JA)^T=\left(\begin{matrix}0 &0\\ 0&-c\end{matrix}\right) $ for $A=\left(\begin{matrix}1 &c\\ 0&1\end{matrix}\right) $. This with (iv) and (v) of Lemma \ref{lem21}    leads the other direction $\Leftarrow$ of (1-2).  Therefore we have finished the proof  for (1-2) of  Proposition \ref{lem10044}.
\end{proof} 
\begin{lemma}\label{lem2.2}
Given a surface  measure $ \sigma$ over a sphere $S^{d-1}$  and a matrix $A\in M_{d\times d}(\mathbb{R})$, we recall
$$\mathcal{M}_{S^{d-1}(A)}(f)(x,x_{d+1})=\sup_t\int_{S^{d-1}} f(x-ty,x_{d+1}-\langle A(x),ty\rangle)d\sigma(y).$$
Then the $L^p(\mathbb{R}^{d+1})\rightarrow L^p(\mathbb{R}^{d+1})$ norm of the maximal operator $\mathcal{M}_{S^{d-1}(A)}$ is invariant when $A(x)$ in the above is replaced by  
 (i) $rA(x)$ for $r\ne 0$ and  (ii) $Q^TAQ(x)$ for any orthogonal matrix $Q$. This   yields the invariance  of the operator norms of the corresponding maximal operators  $\mathcal{M}_{S^{d-1}(A)}^{\delta} $  in
Main Theorems  \ref{main1}   under a switch of $A$ with (i) or  (ii).  
\end{lemma}
\begin{proof} 
  Set $[f]^{1/r}(x,x_{d+1})= f(x,x_{d+1}/r)$ and $[f]_Q(x,x_{d+1})=f(Q(x),x_{d+1})$. Then,
\begin{align*}
[\mathcal{M}_{S^{d-1}(A)} f]^{1/r}(x,x_{d+1})&=\sup_{t>0}\int
[f]^{1/r}\left(  x-  t  y,  x_{d+1} -  \langle rA  (x),   ty \rangle  \right)d\sigma(y),\\
[\mathcal{M}_{S^{d-1}(A)} f]_{Q}(x,x_{d+1})&=\sup_{t>0}  \int_{S^{d-1}}
f\left( Q( x)-  Q (ty),x_{d+1} - \langle A Q(x),   tQ(y) \rangle \right)d\sigma(y) \\
&=\sup_{t>0} \int_{S^{d-1}}
[f]_Q\left(x- ty,x_{d+1} - \langle Q^TAQ (x),  ty \rangle \right)d\sigma(y) .
\end{align*} 
The above two identities implies the desired $L^p$ norm invariance.
\end{proof}
\begin{remark}
Let $\text{rank}\left(JA+(JA)^T\right)=1$ and $\text{rank}(A)=2$ in Main Theorem \ref{main1}. Then from  (1-2) in Proposition \ref{lem10044}   with  Lemmas \ref{lem21} and   \ref{lem2.2}, it suffices to treat  $A=  \left(\begin{matrix}1&c\\ 0&1\end{matrix}\right).$ 
 \end{remark}
\section{Idea of Proof}    
 In this section, we shall see why $JA+(JA)^T$ is involved in our estimate  for the oscillatory integral operators  $ \mathcal{T}_{\rm{annulus}}^{\lambda}$ in (\ref{s83}). 
 \subsection{Why $JA+(JA)^T $     arises} To obtain (\ref{s83})     in Sections \ref{secc6} through \ref{sec90}, we shall  estimate the integral $\int |K(\xi,\eta)|d\xi$ for the integral kernel $ K(\xi,\eta)$ of $ [\mathcal{T}_{\rm{annulus}}^{\lambda}]^*\mathcal{T}_{\rm{annulus}}^{\lambda}$,
\begin{align}
K(\xi,\eta)&=\lambda^{d }\int e^{2\pi i  \lambda \Phi(x,t,\xi,\eta)}\psi(x)^2\chi(t)^2\chi\left( 
t  |\xi+ x|\right)\chi\left(  t |\eta+
x| \right) dxdt.\label{28n}\end{align}
From (\ref{apl}), the phase function is 
\begin{align*} 
\Phi(x,t,\xi,\eta)&=\phi(x,t,\xi)-\phi(x,t,\eta)= \langle A^T(\xi-\eta),x\rangle +t\left( |\xi+x  |-|\eta+x|\right).
\end{align*}
For fixed $\eta$,  the support of $(x,\xi,t)$ in (\ref{28n}) is contained in the set $$D(\eta)=\{(x,t,\xi):|x|\le 1\ \text{and}\ |t|\approx |\xi+x|\approx|\eta+x|\approx 1\}.$$
On this region, if $|\partial_{t}\Phi|$ or $|\nabla_x\Phi|$    is away from  $ \lambda^{-1+\epsilon}$ or $c|\xi-\eta|$ respectively, then the integral $\int |K(\xi,\eta)|d\xi=O(1)$.  Thus, it suffices to consider the region $D(\eta)$ satisfying
\begin{align}
\partial_t\Phi(x,t,\xi,\eta)|&= |\xi+x  |-|\eta+x|=O( \lambda^{-1+\epsilon})\label{33}\\
\nabla_x\Phi(x,t,\xi,\eta)&= A^T(\xi-\eta) + t \left( \frac{\xi+x}{|\xi+x|}-  \frac{\eta+x}{|\eta+x|} \right) =o(|\xi-\eta|).\label{34}
\end{align}
Under the condition of (\ref{33}), we shall verify   in (\ref{34}) that
\begin{align}\label{0023g}
\nabla_x\Phi(x,t,\xi,\eta)&=\left(A^T+\frac{t}{|\xi+x|}I\right)(\xi-\eta)+O(\text{smaller term}).
 \end{align}
 which is very adaptive to the application of the integration by parts.
From  the invertibility of $\left(A^T+\frac{t}{|\xi+x|}I\right)  $ due to our hypothesis of $A $ having no  purely real eigenvalues,  we shall obtain   the following core estimate 
\begin{align}\label{pc001}
 \|  \mathcal{T}_{\rm{annulus}}^\lambda\|_{L^2(\mathbb{R}^d)\rightarrow L^2(\mathbb{R}^d\times \mathbb{R})}=O(\lambda^{d/2} \lambda^{-d/2}\lambda^{\epsilon})
\end{align} 
for the proof of 
Main Theorem 2 when $d\ge 3$.  Here $\lambda^{-d/2}$ is the decay part, which is usually obtained from the full rank of the mixed hessian matrix.  Before estimating $\|\mathcal{T}_{\rm{annulus}}^\lambda\|_{op}$ of  (\ref{pc001}) for $d=2$ of Main Theorem 1 with $\text{rank}(A)=2$, we need to write out the mixed hessian matrix of  the phase function  $\phi(x,t,\xi)=\langle A(x), \xi\rangle+t|x+\xi|$ for $x,\xi\in \mathbb{R}^2$ and  $t\in\mathbb{R}$ in (\ref{apl}) as
\begin{align}
\phi_{(x_1x_2t)(\xi_1\xi_2)}''=\left(\begin{matrix} \phi_{x_1\xi_1}&\phi_{x_1\xi_2} \\
\phi_{x_2\xi_1}&\phi_{x_2\xi_2}\\
\phi_{t\xi_1}&\phi_{t\xi_2}
 \end{matrix}  \right)=   \left(\begin{matrix}a_{11}+t\frac{u_2^2}{|u|^3}&u_{12}- t\frac{u_1u_2}{|u|^3} \\ a_{21}-
t\frac{u_1u_2}{|u|^3} &a_{22}+t\frac{u_1^2}{|u|^3} \\ \frac{u_1}{|u|} &\frac{u_2}{|u|} \end{matrix}  \right)\label{a71}
\end{align}
where we denote $u=(u_1,u_2):=(x_1+\xi_1,x_2+\xi_2)$.
 A further calculation gives 
\begin{align}
\det\left(\phi_{(x_1x_2)(\xi_1\xi_2)}''\right)&= \det(A)+\frac{t}{|u|^3}\left\langle \frac{(A+A^T)}{2}u,u\right\rangle,\nonumber\\
\det\left(\phi_{(x_1t)(\xi_1\xi_2)}''\right)& =\frac{1}{|u|} \left(\det\left(   \begin{matrix}a_{11} &a_{12} \\ u_1&u_2\end{matrix}  \right) +\frac{tu_2}{|u|} \right),\label{a72} \\
\det\left(\phi_{(x_2t)(\xi_1\xi_2)}''\right)& =\frac{1}{|u|} \left(\det\left(   \begin{matrix}a_{21} &a_{22} \\ u_1&u_2\end{matrix}  \right) -\frac{tu_1}{|u|} \right).\nonumber
\end{align}
The case $A=J$ (corresponding to the Heisenberg group  $A+A^T={\bf 0}$)  is the best case satisfying that   $\text{rank}(JA+(JA)^T)=2$. For this case, the determinant of the mixed hessian in (\ref{a72}) is  
\begin{align*}
 \det\left( \phi_{(x_1x_2)(\xi_1\xi_2)}''\right)=\det(J)+\frac{t}{|u|^3}\left\langle \frac{(J+J^T)}{2}u,u\right\rangle=\det(J)= 1.
\end{align*}
This  enables us to  apply  the H\"{o}rmander  theorem for the estimate of the above $\mathcal{T}_{\rm{annulus}}^\lambda f(\cdot,t)$ with  a fixed $t$, to obtain a good operator norm $O(\lambda^{d/2}\lambda^{-d/2})$ with $d=2$ in (\ref{pc001}). For this case $A=J$ (Heisenberg group), the time variable $t$ does not play a role for earning any regularity. In general case, we need to utilize the time variable for the $L^2$ estimate in this paper. However, the difficulty occurs in the sense that    
\begin{itemize}
\item we are  not able   to apply the H\"{o}rmander  theorem for the $2\times 2$ sub-matrix  as  $$ \rm{rank}(\phi_{(x_1x_2t)(\xi_1\xi_2)}'')=2  \ \text{ in (\ref{a71}) can fail,} $$ even if the phase has the good  rank condition, namely, $\text{rank}( JA+(JA)^T)=2$. 
\item any direct application of  fold singularity condition does not seem to give a full additional regularity $+1/2$ as it appears in the following example.
\end{itemize}
A model example related with these obstacles  is
\begin{align*}
A=\left(\begin{matrix}0&c\\c&0\end{matrix}  \right)  \ \text{where}\ \text{$a_{11}=a_{22}=0$ and $a_{12}=a_{21}=c  $ with $c\ne 0$.}
\end{align*}
For this case, the  three   mixed hessians  of (\ref{a72}) are simultaneously singular since $$
 \det( \phi_{(x_1x_2)(\xi_1\xi_2)}'') =\det(\phi_{(x_1t)(\xi_1\xi_2)}'')=\det(\phi_{(x_2t)(\xi_1\xi_2)}'')=0,  $$
   at $(x_1,x_2,t,\xi_1,\xi_2)$      satisfying for  $(u_1,u_2)=(x_1+\xi_1,x_2+\xi_2)$
$$  |u_1|=|u_2|=1   \ \text{and}\ t=\pm \sqrt{2} c\ \text{where $\pm$ is the sign of  $\frac{u_2}{u_1}$}.$$
 From the rank condition  $ \rm{rank}(\phi_{(x_1x_2t)(\xi_1\xi_2)}'')=1$ for this $A$,  we get only $O(\lambda^{2/2} \lambda^{-1/2} )$ while we need to obtain $ \|  \mathcal{T}_{\rm{annulus}}^\lambda\|_{L^2(\mathbb{R}^2)\rightarrow L^2(\mathbb{R}^2\times \mathbb{R})}=O(\lambda^{2/2} \lambda^{-2/2}\lambda^{\epsilon})$. 
In order to overcome this lack of nondegeneracy and obtain an additional factor $\lambda^{-1/2}$,   we    estimate the intersection of the sublevel sets satisfying (\ref{33}) and (\ref{34}):
\begin{itemize}
\item $   |\{(x,t,\xi)\in D(\eta): |\partial_t\Phi(x,t,\xi,\eta) | <\lambda^\epsilon/\lambda\}|$ of (\ref{33}) and 
\item   $|\{(x,t,\xi)\in D(\eta):| \langle v,\nabla_x\Phi(x,t,\xi,\eta)\rangle| <\lambda^\epsilon/\lambda\}|$ derived from (\ref{34}).  
 \end{itemize}
Here $v= J\frac{\left( \xi-\eta\right)}{|\xi-\eta|} $ perpendicular to  $\nabla_x\partial_t\Phi(x,t,\xi,\eta) $.  From  (\ref{0023g}), it follows    that
\begin{align*}
  \left\langle v,\nabla_x\Phi  (x,t,\xi,\eta)\right\rangle&= \left\langle \frac{1}{2}(JA+(JA)^T)(\xi-\eta),\frac{\left( \xi-\eta\right)}{|\xi-\eta|} \right\rangle+O(small).
\end{align*}
So, we shall focus on  the estimation of the measure of the   region $\mathcal{R}^{A}_{\rm{annulus}}(\eta)$ given by
\begin{align*} 
 \left\{(x,t,\xi)\in D(\eta):\big| |\xi+x  |-|\eta+x|\big|+\left|\left\langle (JA+(JA)^T)(\xi-\eta),  \frac{\left( \xi-\eta\right)}{|\xi-\eta|}\right\rangle \right|\lesssim\frac{\lambda^{\epsilon}}{\lambda}  \right\} \end{align*}
  where the oscillation  in  (\ref{28n})  ceases.
On the region $ \mathcal{R}^{A}_{\rm{annulus}}(\eta)$, notice that
\begin{itemize}
\item the first term  $\big| |\xi+x  |-|\eta+x|\big|$ arises from the circles $S^{1}$,
\item the second one  $\left\langle (JA+(JA)^T)(\xi-\eta),    \frac{\left( \xi-\eta\right)}{|\xi-\eta|} \right\rangle  $     from the planes $\pi_A(x,x_{3}) $ in (\ref{u88}).
\end{itemize}
   From those two terms, we   derive the  desired bound of $ \|  \mathcal{T}_{\rm{annulus}}^\lambda\|_{op}$ in Main Theorem \ref{main1}: 
\begin{itemize} 
\item[(1)] If  $\text{rank}(JA+(JA)^T)=2$,  then     $
   \left| \mathcal{R}^{A}_{\rm{annulus}}(\eta) \right|\lesssim \frac{\lambda^{2\epsilon}}{\lambda^2}\ $ and   $\|  \mathcal{T}_{\rm{annulus}}^\lambda\|_{op}=O(\lambda^{ \epsilon})$.
\item[(2)] If $\text{rank}(JA+(JA)^T)=0$,    then $   \left| \mathcal{R}^{A}_{\rm{annulus}}(\eta)\right|\lesssim \frac{\lambda^{2\epsilon}}{\lambda} $ and    $\|  \mathcal{T}_{\rm{annulus}}^\lambda\|_{op}=O(\lambda^{ 1/2+ \epsilon})$.
\item[(3)]
 If  $\text{rank}\left(JA+(JA)^T\right)=1$ and $\text{rank}(A)=2$, we shall apply the well known estimates of the oscillatory integral operators having the phase of the  two sided fold singularities of \cite{GS2} in Section \ref{sec8.2}.  This leads
 $ \|  \mathcal{T}_{\rm{annulus}}^\lambda\|_{op}=O(\lambda^{ 1/6+\epsilon})$.
 \end{itemize}
Thus,  this with the reduction of Main Theorem \ref{main3}  yields  $\|\mathcal{M}^{\delta}_{S^1(A)} \|_{L^2\rightarrow L^2}\lesssim_{\epsilon} \delta^{-c(A)/2} $ of   Main Theorem 1 for $\det(A)\ne 0$. Finally, we remark that
 (1) cannot be obtained by any direct application of fold singularity conditions associated with our phase function $\phi$, whereas (3) can be obtained from the two sided fold singularity associated with $\phi$.
  \subsection{Where  $ A+A^T $     arises from} 
For the Nikodym maximal function $\mathcal{N}^{\delta}_{T(A)}$  in (\ref{08g}), we can obtain an analoguous  result of Main Theorem 3.  Let  $e(\theta)=(\cos \theta,\sin \theta)$ with $\theta\in [0,2\pi] $ and $x\in\mathbb{R}^2$.  Then the   oscillatory integral operator  $\mathcal{T}_{\rm{tube}}^\lambda$ corresponding to $ \mathcal{T}_{\rm{annulus}}^\lambda$ is defined as
   $$\mathcal{T}_{\rm{tube}}^\lambda  f(x,\theta)=\lambda^{1/2}  \psi(x)\int  e^{2\pi i\lambda \langle A(x), \xi\rangle }\chi\left( \langle \xi+x,
e^{\perp}(\theta)\rangle  \right) \psi\left(  \lambda \langle \xi+x,
e(\theta)\rangle \right)  \widehat{f}(\xi)d\xi.$$
Then the singular set $\mathcal{R}_{\rm{tube}}^A(\eta)$ of $\mathcal{T}_{\rm{tube}}^\lambda$  corresponding to     $\mathcal{R}_{\rm{annulus}}^A(\eta)$ of   $\mathcal{T}^\lambda_{\rm{annulus}}$, is
$$\mathcal{R}_{\rm{tube}}^A(\eta)= \left\{(x,\theta,\xi):  \left|\left\langle (A+A^T)(\xi-\eta), \frac{\left( \xi-\eta\right)}{|\xi-\eta|} \right\rangle \right|\lesssim \frac{\lambda^{\epsilon }}{\lambda} \ \text{and}\ -x,\xi \in  T_{1/\lambda}(e^{\perp}(\theta),\eta)\right\}.$$
Here $ T_{1/\lambda}(e^{\perp}(\theta),\eta)$ is a tube with  dimensions $1\times 1/\lambda$ along the direction $e^{\perp}(\theta)$ centered at $\eta$, as
\begin{align*}
T_{1/\lambda}(e^{\perp}(\theta),\eta) :=\left\{\xi:|\langle \xi-\eta,  e^{\perp}(\theta)\rangle| \lesssim 1\ \text{and}\ |\langle \xi-\eta,  e(\theta)\rangle|  \lesssim 1/\lambda\right\}.
\end{align*}
Indeed,   we do not treat   any detail   regarding the estimate of the Nikodym maximal function in this paper. However, it is remarkable that
these two singular region $\mathcal{R}_{\rm{annulus}}^A(\eta)$ and $\mathcal{R}_{\rm{tube}}^A(\eta)$  are where the corresponding phase functions of $\mathcal{T}_{\rm{annulus}}^\lambda $ and $ \mathcal{T}_{\rm{tube}}^\lambda $ stop their oscillations respectively. These two sets give rise to  the symmetric and skew--symmetric rank conditions of $A$ in Definition \ref{def12} which determine the operator norms of   $\mathcal{N}^\delta_{T(A)}$ and $\mathcal{M}^\delta_{S^1(A)}$.

\section{Proof of  of Main Theorem \ref{main3}; Reduction to  $\mathcal{T}_j^\lambda$ }\label{sec3}
The first part of  Main Theorem \ref{main3} is to change  the maximal function estimate to that of the  oscillatory integral operators $\mathcal{T}_j^\lambda$. Its proof  is based on  the Plancherel theorem with respect to the last variable $x_{d+1}$,  combined with   (i) the asymptotic expansion of $\widehat{d\sigma}(\xi)$ and (ii) the majorization of $L^\infty(dt)$ by $L^2_{1/2}(dt)$. 
\subsection{Frequency decomposition}
Let  $\widehat{d\sigma}$ be the Fourier transform of the
 measure $d\sigma$ on the unit sphere $S^{d-1}$. Decompose
 $  \widehat{d\sigma}(\xi)=\sum_{j=0}^\infty\widehat{d\sigma_j}(\xi)$ 
 with  $d\sigma_j$ having the frequency support $|\xi|\approx 2^j$:
\begin{align}\label{kc1}
\widehat{d\sigma_0}(\xi)=\widehat{d\sigma}(\xi)\psi(\xi)
  \ \text{and}\ \widehat{d\sigma_j}(\xi)=\widehat{d\sigma}(\xi)\chi\left(\frac{\xi}{2^j}\right)\ \text{for $j\ge 1$.}
  \end{align}
Let $A\in M_{d\times d}(\mathbb{R})$ and set the average  associated with  each piece of measure  $d\sigma_j$   as
  \begin{align}\label{kcc1}
\mathcal{A}_{j} f(x,x_{d+1},t)&: 
 = \int_{\mathbb{R}^d}  f\left(x-ty,x_{d+1}-\langle A(x), ty\rangle\right)d\sigma_j(y) 
  \end{align}
where $(x,x_{d+1},t)\in\mathbb{R}^{d+1}\times\mathbb{R}$. To each   integer $j \ge 0$, we assign the corresponding maximal operator $\mathcal{M}_j$ defined by
\begin{align}\label{kcc6}
\mathcal{M}_jf(x,x_{d+1})= \sup_{t>0}\mathcal{A}_{j} f(x,x_{d+1},t).
\end{align}
For the case $j=0$ in (\ref{kcc6}),  a rapidly decreasing function  $d\sigma_0$ due to  $\widehat{d\sigma_0}\in C_0^\infty$ in (\ref{kc1}) gives a good $L^p$ result for $\mathcal{M}_0$. In fact, in the appendix (Section   \ref{sec10}),  we show  that for any $A\in M_{d\times d}(\mathbb{R}),$ there is a constant $C_p>0$ such that
\begin{align}\label{kcc}
\|\mathcal{M}_0f\|_{L^p(\mathbb{R}^{d+1})}\le C_p \| f\|_{L^p(\mathbb{R}^{d+1})} \ \text{  for all  $f\in L^p(\mathbb{R}^{d+1})$  where $1<p\le \infty$.}
\end{align}
 To deal with $j\ge 1$ in (\ref{kcc6}), we use the Fourier inversion for the second term of (\ref{kc1}) and the rapid decay   $\chi^{\vee}(x-y)=O(1+|x|)^{-N})$ with $|x|\ge 2$ and $y\in S^{d-1}$, we  verify that
\begin{align}\label{kc2}
d\sigma_j(x)=\int_{S^{d-1}} 2^{dj} \chi^{\vee}\left(2^j(x-y)\right)d\sigma(y)=O\left(\frac{2^j}{(1+|x|)^{N}}\right)
\end{align}  
where 
$2^j$ on the RHS of (\ref{kc2}) follows from the measure estimate of   the support  for $y\in S^{d-1}$.  
Hence  we apply (\ref{kcc}) and  (\ref{kc2}) to obtain  an upper bound for the $L^p$ norm of the maximal operator $\mathcal{M}_j$ in (\ref{kcc6}) with $j\ge 0$   for any $1<p\le \infty$ 
 \begin{align}\label{kcc2}
\|\mathcal{M}_j\|_{L^p(\mathbb{R}^{d+1})\rightarrow L^p(\mathbb{R}^{d+1})}\lesssim 2^j \ \text{ for all  matrices $A\in M_{d\times d}(\mathbb{R})$ in (\ref{kcc1}).}
\end{align}
To prove Main Theorem \ref{main3}, we first reduce the estimate of $\mathcal{M}_{S^{d-1}(A)}$ to that of $\mathcal{M}_j$.
\begin{proposition}\label{prop44}
Let  $d-2>c(A)$. Suppose that    $\mathcal{M}_j$ for every $j\ge 0$ is bounded as,
\begin{align}\label{kc4}
  \|\mathcal{M}_jf\|_{L^2(\mathbb{R}^{d+1})}\lesssim_{\epsilon}  2^{c(A)j/2}2^{-(d-2)j/2}\| f\|_{L^2(\mathbb{R}^{d+1})}\ \text{for all $f\in L^2(\mathbb{R}^{d+1})$}.
\end{align}
Then   (\ref{s81}) and (\ref{s82}) hold, which lead  the desired bounds of $\mathcal{M}_{S^{d-1}(A)}$ and $\mathcal{M}_{S^1(A)}^{\delta}$.
\end{proposition}
\begin{proof}[Proof of  Proposition \ref{prop44}]
From the condition $d-2>c(A)$, we can sum (\ref{kc4}) over $j$ to obtain the $L^2(\mathbb{R}^{d+1})$ boundedness of $\mathcal{M}_{S^{d-1}(A)}$. 
The interpolation of (\ref{kc4}) for $p=2$ and (\ref{kcc2}) for $p$ near $1$, combined with (\ref{kcc}), yields that  for $1<p<2$ 
\begin{align*}
  \|\mathcal{M}_{S^{d-1}(A)}\|_{L^p\rightarrow L^p}&\le \sum_{j=0}^\infty \|\mathcal{M}_{j}\|_{L^p\rightarrow L^p}  \\
  & \le\sum_{j\ge 0} 2^{O(\epsilon j)}2^{j(1-\theta)}2^{j(c(A)/2-(d-2)/2)\theta} 
 \le \sum_{j\ge 0} 2^{O(\epsilon j)} 2^{j[1+(1-1/p)(-d+c(A))]}
\end{align*}
for $1/p=\frac{(1-\theta)}{1+\epsilon}+\frac{\theta}{2}$, namely, $\theta=2(1-1/p)+O(\epsilon)$. Thus (\ref{kc4}) implies the $L^p$ boundedness of the spherical maximal operator $\mathcal{M}_{S^{d-1}(A)}$ for $p>\frac{(d- c(A))}{(d- c(A))-1}$ with $d\ge 3$ in  (\ref{s81}).
 Next, we claim that  (\ref{kc4}) for $d=2$ would lead the desired bound  for  the annulus maximal operator $\mathcal{M}^\delta_{S^1(A)}$ in (\ref{s82}). Write the  measure  supported on the annulus $S^1_\delta$ in (\ref{0g}) with  
 $\delta=2^{-j}$ as
 \begin{align}\label{ike1}
 \left(d\sigma*2^{dj}\psi (2^j\cdot )\right)\left(y\right)\ \text{for $y\in \mathbb{R}^2$  where $*$ is the Euclidean convolution in $\mathbb{R}^2$.}
\end{align}
Take the Fourier transform of (\ref{ike1}) in the Euclidean space $\mathbb{R}^2$ and make a dyadic decomposition,
$$ \widehat{d\sigma}(\xi) \widehat{\psi}\left(  \frac{\xi}{2^j}\right)=\widehat{d\sigma}(\xi) \widehat{\psi}\left(  \frac{\xi}{2^j}\right)\left( \psi(\xi)+  \sum_{\ell=1}^\infty\chi\left(  \frac{\xi}{2^\ell}\right)  \right). $$
Fix  $j\in\mathbb{Z}_+$ for $2^{-j}=\delta$.  As  we did in (\ref{kcc6}), we set the maximal operator $\widetilde{\mathcal{M}_\ell}$ associated with the symbol $\widetilde{\sigma}_\ell$ below, that is controlled as  $$\widetilde{\sigma}_\ell(\xi):=\widehat{d\sigma}(\xi) \widehat{\psi}\left(  \frac{\xi}{2^j}\right)
\chi\left(  \frac{\xi}{2^\ell}\right)=\begin{cases} O\left( \widehat{d\sigma}(\xi)  \chi\left(  \frac{\xi}{2^\ell}\right) \right) \ \text{if $2^\ell\le 2^j$ }  \\
 O\left(2^{-N(\ell-j)}  \widehat{d\sigma}(\xi)  
\chi\left(  \frac{\xi}{2^\ell}\right)     \right) \ \text{if $2^\ell>2^j$}
\end{cases}$$
for $N\gg 1$, and  $\widetilde{\sigma}_0(\xi)=\widehat{d\sigma}(\xi)  \widehat{\psi}\left(  \frac{\xi}{2^j}\right) \psi(\xi) =O(\widehat{d\sigma}(\xi)  \psi(\xi) ) $. This yields   for $\delta=2^{-j}$,  
\begin{align*}
\|\mathcal{M}^{\delta}_{S^{d-1}(A)}\|_{L^2\rightarrow L^2} &\lesssim \sum_{\ell=0}^\infty \|\widetilde{\mathcal{M}}_\ell\|_{L^2\rightarrow L^2}\\
&\lesssim \sum_{\ell: 2^0\le 2^{\ell}\le 2^j} \| \mathcal{M}_\ell\|_{L^2\rightarrow L^2}+ \sum_{\ell: 2^j<2^{\ell}  } 2^{-N(\ell-j)}\|\mathcal{M}_j\|_{L^2\rightarrow L^2}  
\end{align*}
which is bounded by $2^{2\epsilon j}   2^{jc(A)/2+\epsilon j}$ due to (\ref{kc4}). This leads
 (\ref{s82}) for $\delta=2^{-j}$.  Therefore, we finish the proof of Proposition \ref{prop44}. 
\end{proof}
Hence, for a proof of Main Theorem \ref{main3}, we   show  (\ref{kc4}) under the assumption of  (\ref{s100}).  \subsection{Symbol Representations}\label{Sec42}
Our next step is to replace the average operator  (\ref{kcc1}) with the  Fourier integral operator associated with the wave propagation $e^{2\pi it|\xi|}$ arising from the Euclidean Fourier transform of the spherical measure.
 By  applying the Fourier inversion formula of the Euclidean space $\mathbb{R}^{d+1}$ for  $f$, we  express $\mathcal{A}_{j} f$ in (\ref{kcc1}) as
\begin{align*}
\mathcal{A}_{j} f(x,x_{d+1},t)
& =\int_{\mathbb{R}^{d+1}} e^{2\pi i \langle (x,x_{d+1}),(\xi,\xi_{d+1})\rangle}\widehat{d\sigma_{j}}\left(t(\xi+ \xi_{d+1} A(x))\right)\widehat{f}(\xi,\xi_{d+1})d\xi d\xi_{d+1}.
\end{align*}
Here $\widehat{d\sigma_j}$ is the Euclidean Fourier transform of $d\sigma_j$ of (\ref{kc1}) in $\mathbb{R}^{d}$ given by
\begin{align}\label{pp04}
 \widehat{d\sigma_{j}}\left(t\big(\xi+ \xi_{d+1} A(x)\big)\right)    =\widehat{d\sigma}\left(t \big(\xi+ \xi_{d+1} A(x)  \big) \right)\chi\left(\frac{t(\xi+ \xi_{d+1} A(x)}{2^j}\right).
 \end{align}
On the other hand, the Fourier transform $\widehat{d\sigma}$ of the sphere measure on $S^{d-1}$ is written in terms of the Bessel function $J_{(d-2)/2}$ as
$$\widehat{d\sigma}(\xi)= 2\pi J_{(d-2)/2}(2\pi |\xi|)|\xi|^{-(d-2)/2} $$
where  the Bessel function $ J_{(d-2)/2}$ for large $|\xi|$ has
  the asymptotic expansion of   $$ J_{(d-2)/2}(|\xi|)=|\xi|^{-1/2}\sum_{\ell=0}^{N} (a_\ell e^{  2\pi i|\xi|}+   b_\ell e^{  -2\pi i|\xi|}) |\xi|^{-\ell}+O(|\xi|^{-(N+\frac{3}{2})}).$$ 
 Thus,
  $\widehat{d\sigma_j}(\xi)= \widehat{d\sigma}(\xi)\chi\left(\frac{\xi}{2^j}\right)$  for $j\ge 1$ in (\ref{kc1})   is expressed as  
\begin{align}\label{kc3}
 \widehat{d\sigma_j}(\xi)=\sum_{\ell =0}^N\frac{( a_\ell e^{  2\pi i|\xi|}+   b_\ell e^{  -2\pi i|\xi|})\chi_\ell\left(\frac{\xi}{2^j}\right) }{2^{\ell j}}\frac{2\pi}{2^{j(d-1)/2}}+O\left( \frac{\tilde{\chi}\left(\frac{\xi}{2^j}\right)}{2^{(d-1) j/2} 2^{(N+\frac{1}{2})j}}\right) .
\end{align}
where $\chi_\ell (\xi):=\chi(\xi)/|\xi|^{\frac{d-1}{2}+\ell}$, which  is similar to $\chi(\xi)$ due to $\text{supp}(\chi)=\{|\xi|\approx 1\}$.
In the summation,  the first term ($\ell=0$)  is dominating, as the sum over $1\le \ell\le N$  with the error   term  behaves   better. So, we   work with only the first term   in  (\ref{kc3}) given by $$    \frac{e^{  2\pi i|\xi|} }{2^{j(d-1)/2}} \chi_0\left(\frac{\xi}{2^j}\right) $$   for computing
 $ \|\mathcal{M}_jf\|_{L^2(\mathbb{R}^{d+1})}$  in (\ref{kc4}) of Proposition \ref{prop44}. We can treat $e^{  -2\pi i|\xi|} $ similarly. Hence, we now replace the above average operator $  \mathcal{A}_j $ with the Fourier integral operator $  \frac{1}{2^{j(d-1)/2}}\mathcal{T}_{m_{j}},$ where  $\mathcal{T}_{m_{j}}:L^2(\mathbb{R}^{d+1})\rightarrow L^2(\mathbb{R}^{d+1}\times\mathbb{R}) $ is defined as
 \begin{align}
& \mathcal{T}_{m_{j}} f(x,x_{d+1},t) \nonumber\\
 &\qquad =  \int e^{2\pi i \langle (x,x_{d+1}),(\xi,\xi_{d+1})\rangle }m_{j} (x,x_{d+1},t,\xi,\xi_{d+1})\widehat{f}(\xi,\xi_{{d+1}})d\xi d\xi_{{d+1}}  \label{3hf}
 \end{align}
whose symbol is $$m_{j} (x,x_{d+1},t,\xi,\xi_{d+1})=   e^{  2\pi it|\xi+\xi_{d+1}A(x)|}  \chi \left(\frac{t(\xi+\xi_{d+1}A(x))}{2^j}\right) .$$
Furthermore,  to treat the supremum over $t>0$ conveniently as in \cite{MS}, we restrict   time variable $|t|\approx 2^{-k}$ to put a symbol $m_{j,k}$ as
\begin{align}\label{955t} 
m_{j,k}(x,x_{d+1},t,\xi,\xi_{d+1})= e^{2\pi i     t|\xi+\xi_{{d+1}}A(x)| } \chi\left(\frac{t(\xi+\xi_{{d+1}}A(x))}{2^j}\right)\chi(2^kt)  .
 \end{align} 
Associated with the sequence $ (m_{j,k})_k$,  we   set the maximal operator $  \mathfrak{M}_{j} $  defined by
 \begin{align} \label{115a}
   \mathfrak{M}_{j} f(x,x_{{d+1}})&:= \sup_{k\in\mathbb{Z}}\sup_{t>0} \left|2^{-j(d-1)/2}\mathcal{T}_{m_{j,k}}f(x,x_{{d+1}},t)\right|  \end{align} 
satisfying that   
\begin{align}\label{11p}
\| \mathcal{M}_j\|_{L^2(\mathbb{R}^{d+1})\rightarrow L^2(\mathbb{R}^{d+1})}\lesssim \|  \mathfrak{M}_{j}\|_{L^2(\mathbb{R}^{d+1})\rightarrow L^2(\mathbb{R}^{d+1})}.
\end{align}
        \subsection{Majorizing $L^\infty(\mathbb{R}_+)$ by $L^2_{1/2}(\mathbb{R}_+)$}
  Let $m_{j,k}$ be the symbol in (\ref{955t}).  Then  by applying the fundamental theorem of calculus to $|\mathcal{T}_{m_{j,k}}f(x,x_{{d+1}},t)|^{2}$ in (\ref{115a}), we obtain that
\begin{align}
\sup_{t>0} |\mathcal{T}_{m_{j,k}}f(x,x_{{d+1}},t)|^{2}&=\sup_{t>0}\int_{0}^t \partial_s  \left[\left(\mathcal{T}_{m_{j,k}}f(x,x_{{d+1}},s)  \overline{\mathcal{T}_{m_{j,k}}f(x,x_{{d+1}},s) } \right) \right]ds\nonumber\\
&\le  2\int_{0}^\infty \left|\partial_t \left(\mathcal{T}_{m_{j,k}}f(x,x_{{d+1}},t)  \right) \right|  \left|\mathcal{T}_{m_{j,k}}f(x,x_{{d+1}},t) \right| dt. \label{nv22}
\end{align}
We can   majorize  $\partial_t  \mathcal{T}_{m_{j,k}}f$ in the RHS of the above inequality as
\begin{align}\label{kc9}
|\partial_t  \mathcal{T}_{m_{j,k}}f(x,x_{{d+1}},t)|\lesssim 2^{j+k}|\mathcal{T}_{\tilde{m}_{j,k}}f(x,x_{{d+1}},t)|
\end{align}
where   $\tilde{m}_{j,k}$   a slight modification of $m_{j,k}$  with $\chi(u)$    replaced by $u\chi(u)$ or $\chi(u)/u$ in (\ref{955t}).
\begin{proof}[Proof of (\ref{kc9})]
It holds that  $\partial_t\mathcal{T}_{m_{j,k}}f=\mathcal{T}_{\partial_t m_{j,k}}f $ in (\ref{3hf}).  The $t$-derivative of $m_{j,k}$ in (\ref{955t}) consists of the following factors
\begin{align*}
\partial_t \left[e^{2\pi i    t|\xi+\xi_{{d+1}}A(x)| }\right]&=2\pi  i |\xi+\xi_{{d+1}}A(x)|  e^{2\pi i  t|\xi+\xi_{{d+1}}A(x)| }=O(2^{j+k})\\
\partial_t \left[\chi(2^kt)  \chi\left(\frac{|\xi+\xi_{{d+1}}A(x)| t}{2^j } \right)\right]&= O(2^k ).\end{align*}
From this, we write the   main additional factor multiplied to $m_{j,k}$ in the above as
$$ |\xi+\xi_{{d+1}}A(x)|=2^{j+k} \left[\left(\frac{|\xi+\xi_{{d+1}}A(x)| t}{2^j } \right) \frac{1}{(2^kt)}\right]. $$ 
Here $ \left[\left(\frac{t|\xi+\xi_{{d+1}}A(x)|  }{2^j } \right) \frac{1}{(2^kt)}\right]$   absorbed into $ \chi\left(\frac{t  |\xi+\xi_{{d+1}}A(x)| }{2^j}\right)\chi(2^kt) $ of $m_{j,k}$ in (\ref{955t}). This  verifies (\ref{kc9}).
 \end{proof}
From (\ref{115a})-(\ref{kc9}), it holds that
      \begin{align}
\|\mathfrak{M}_jf\|_{L^2(\mathbb{R}^{d+1})}  &\le 2^{-(d-1)j/2} 2^{j/2} \left(\sum_{k\in\mathbb{Z}}\left\|2^{k/2}\mathcal{T}_{\tilde{m}_{j,k}}f\right\|_{L^2(\mathbb{R}^{d+1}\times \mathbb{R})}^2\right)^{1/2}. \label{oj11}
\end{align} 
From now on, we regard $\tilde{m}_{j,k}$ in the RHS of (\ref{oj11}) as $m_{j,k}$ of  (\ref{955t}).
\subsection{Littlewood-Paley Decomposition}
\begin{definition}\label{de31}
Note that $\psi$  is supported in $|(\xi,\xi_{d+1})|\le1$ in $\mathbb{R}^{d+1}$ and
 $\psi(\xi,\xi_{d+1})\equiv 1$ on $|(\xi,\xi_{d+1})|\le1/2$. Using the non-isotropic dilation, we set
\begin{align}\label{10061}
\widehat{P_j}(\xi,\xi_{d+1})=\psi\left( \frac{\xi}{2^{j+1}},\frac{\xi_{d+1}}{2^{2(j+1)}}\right)- \psi\left( \frac{\xi}{2^{j}},\frac{\xi_{d+1}}{2^{2j}}\right),
\end{align}
where $\widehat{}$ is the Fourier transform in the Euclidean space $\mathbb{R}^{d+1}$.
Define the Littlewood-Paley projection $\mathcal{P}_jf$ for $f\in \mathcal{S}(\mathbb{R}^{d+1})$ by
\begin{align*}
\mathcal{P}_jf(y,y_{{d+1}})&=\int_{  (\eta,\eta_{d+1})\in \mathbb{R}^d\times \mathbb{R}}e^{2\pi i \langle (\eta,\eta_{{d+1}}),(y,y_{{d+1}}) \rangle}\widehat{P_j}(\eta+\eta_{d+1}A(y),\eta_{d+1})\widehat{f}(\eta,\eta_{{d+1}})d\eta d\eta_{{d+1}}. 
\end{align*}
\end{definition}
From $\sum_{\ell\in\mathbb{Z}} \mathcal{P}_{j+k+\ell}=Id$ and the triangle inequality in (\ref{oj11}), it holds that
\begin{align}
&\| \mathfrak{M}_jf \|_{L^2(\mathbb{R}^{{d+1}})}\nonumber\\
&\qquad  \lesssim 2^{j(1/2-(d-1)/2)} \sum_{\ell\in\mathbb{Z}} \left\| \left(\sum_{k\in \mathbb{Z}}|2^{k/2}\mathcal{T}_{m_{j,k}}\mathcal{P}_{j+k+\ell} f|^2\right)^{1/2}\right\|_{L^2(\mathbb{R}^{{d+1}}\times \mathbb{R})}.\label{0453}
\end{align}
From this with   (\ref{11p}), in order to prove Main Theorem  \ref{main3}, we need to   prove Proposition \ref{prop03}, and estiamte the operator norm $2^{k/2}\mathcal{T}_{m_{j,k}}  $ below. 
\begin{proposition}\label{prop03} 
For an arbitrary small $\epsilon>0$, it holds that
\begin{align*}
& \sum_{\ell\in\mathbb{Z}} \left\| \left(\sum_{k\in \mathbb{Z}}|2^{k/2}\mathcal{T}_{m_{j,k}}\mathcal{P}_{j+k+\ell} f|^2\right)^{1/2}\right\|_{L^2(\mathbb{R}^{{d+1}}\times \mathbb{R})}\\
&\qquad\qquad\qquad \lesssim  2^{\epsilon j} \sup_k  \left\| 2^{k/2}\mathcal{T}_{m_{j,k}}  \right\|_{ L^2(\mathbb{R}^{d+1})\rightarrow L^2(\mathbb{R}^{d+1}\times \mathbb{R})} \|f\|_{L^2(\mathbb{R}^{d+1})}.
\end{align*}
 \end{proposition}
 To prove  Proposition \ref{prop03},  
we shall use the following preliminary estimates.  
 \begin{lemma}\label{prop1}
Recall $\mathcal{T}_{m_{j,k}}$ in  (\ref{3hf}).  If $|\ell|\ge C j$ for a large $C>0$, then there is  $c>0$ such that
\begin{align}
\left\| 2^{k/2}\mathcal{T}_{m_{j,k}}\mathcal{P}_{j+k+\ell} \right\|_{ L^2(\mathbb{R}^{d+1})\rightarrow L^2(\mathbb{R}^{d+1}\times \mathbb{R})}  &\lesssim 2^{-c|\ell|}  \label{00453}
\end{align}
and
 \begin{align}
\left\|  \mathcal{P}_{k_1} \mathcal{P}_{k_2}^*  \right\|_{ L^2(\mathbb{R}^{d+1})\rightarrow L^2(\mathbb{R}^{d+1})} &\lesssim   2^{-c|k_1-k_2|}. \label{045312}
\end{align}
\end{lemma}
\begin{proof}[Proof of Lemma \ref{prop1}]
As its proof is standard, we place  it  in the appendix (Sec 11.3).
\end{proof}
  \begin{lemma}\label{lem44}
Let $M_1,M_2$ be two measure spaces. Suppose that  there is a family $\{T_k\}_{k\in\mathbb{Z}}$ of the operators   $T_k:L^2(M_1)\rightarrow L^2(M_2)$ whose operator norm is denoted by $\|T_k\|_{op}$.  If $\|T_{k_1}T_{k_2}^*\|_{op}\le C(|k_1-k_2|) $ with $\sum_{k\in \mathbb{Z}}  \sqrt{C(|k|)}$ being finite, then   for all $f\in L^2(M_1)$,
\begin{align}
\left\|  \left( \sum_{k\in \mathbb{Z}} |T_k f|^2\right)^{1/2}\right\|_{L^2(M_2)}\lesssim     \left(  \sup_k \|T_k\|_{op} \sum_{k\in \mathbb{Z}}   \sqrt{C(|k|)}\right)^{1/2}  \|f\|_{L^2(M_1)}.\label{8kg}
\end{align}
\end{lemma}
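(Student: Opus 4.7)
The plan is to recast the square function as the operator norm of a vector-valued operator. Define $\mathbf{T}: L^{2}(M_{1}) \to \ell^{2}(L^{2}(M_{2}))$ by $\mathbf{T}f := (T_{k}f)_{k\in\mathbb{Z}}$. Since
$$\Big\|\Big(\sum_{k}|T_{k}f|^{2}\Big)^{1/2}\Big\|_{L^{2}(M_{2})}^{2} = \sum_{k}\|T_{k}f\|_{L^{2}(M_{2})}^{2} = \|\mathbf{T}f\|_{\ell^{2}(L^{2}(M_{2}))}^{2},$$
estimate (\ref{8kg}) reduces to bounding $\|\mathbf{T}\|_{op}$.

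The key tool is the $TT^{*}$ identity $\|\mathbf{T}\|_{op}^{2} = \|\mathbf{T}\mathbf{T}^{*}\|_{op}$, where $\mathbf{T}^{*}(g_{k}) = \sum_{k} T_{k}^{*} g_{k}$ and $\mathbf{T}\mathbf{T}^{*}$ acts on $\ell^{2}(L^{2}(M_{2}))$ as the operator-valued matrix whose $(i,j)$ block is $T_{i}T_{j}^{*}$. I would combine the hypothesis $\|T_{i}T_{j}^{*}\|_{op} \le C(|i-j|)$ with the trivial a priori estimate $\|T_{i}T_{j}^{*}\|_{op} \le \|T_{i}\|_{op}\|T_{j}\|_{op} \le R^{2}$. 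The elementary inequality $\min(a,b) \le \sqrt{ab}$ then yields the interpolated scalar bound
$$\|T_{i}T_{j}^{*}\|_{op} \le \min\bigl(R^{2},\,C(|i-j|)\bigr) \le R\sqrt{C(|i-j|)}.$$

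Next I would apply Schur's test in block form. For any $(g_{j}) \in \ell^{2}(L^{2}(M_{2}))$, the triangle inequality gives $\|\sum_{j} T_{i}T_{j}^{*} g_{j}\|_{L^{2}(M_{2})} \le \sum_{j} R\sqrt{C(|i-j|)}\,\|g_{j}\|_{L^{2}(M_{2})}$, and the standard scalar Schur estimate for the symmetric, translation-invariant kernel $k(i,j) := R\sqrt{C(|i-j|)}$ bounds $\|\mathbf{T}\mathbf{T}^{*}\|_{op}$ by $\sup_{i} \sum_{j} k(i,j) = R\sum_{k\in\mathbb{Z}} \sqrt{C(|k|)}$. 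Since $\sqrt{C(0)} \le \|T_{0}\|_{op} \le R$, the $k=0$ term is harmless and absorbed into the tail $\sum_{k\ge 1}\sqrt{C(|k|)}$; taking the square root gives the announced bound.

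The only delicate bookkeeping is the block Schur step — one must check that the entrywise operator-norm bound on $T_{i}T_{j}^{*}$ transfers cleanly to the block matrix $(T_{i}T_{j}^{*})_{i,j}$ on $\ell^{2}(L^{2}(M_{2}))$ — but this is routine Cauchy--Schwarz in the inner summation, so no genuine obstacle is anticipated. As an equivalent alternative, the classical Cotlar--Stein lemma applied directly to the self-adjoint operators $S_{k} := T_{k}^{*}T_{k}$ on $L^{2}(M_{1})$ yields the same conclusion: indeed $\|S_{i}S_{j}\|_{op} \le \|T_{i}^{*}\|_{op}\|T_{i}T_{j}^{*}\|_{op}\|T_{j}\|_{op} \le R^{2}C(|i-j|)$ gives $\|\mathbf{T}^{*}\mathbf{T}\|_{op} = \|\sum_{k}S_{k}\|_{op} \lesssim R\sum_{k}\sqrt{C(|k|)}$, and $\|\mathbf{T}\|_{op}^{2} = \|\mathbf{T}^{*}\mathbf{T}\|_{op}$ closes the loop.
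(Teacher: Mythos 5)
Your proposal is correct, and your closing ``alternative'' is precisely the paper's own proof: the paper applies Cotlar--Stein to the self-adjoint family $S_k=T_k^*T_k$ via the identity $\bigl\|(\sum_k|T_kf|^2)^{1/2}\bigr\|_{L^2(M_2)}^2=\langle\sum_kS_kf,f\rangle$ and the bound $\|S_{k_1}S_{k_2}\|\lesssim R^2\|T_{k_1}T_{k_2}^*\|$. Your primary route (Schur's test on the block Gram matrix $(T_iT_j^*)_{i,j}$ after interpolating $\|T_iT_j^*\|\le\min(R^2,C(|i-j|))\le R\sqrt{C(|i-j|)}$) is an equally valid, essentially equivalent packaging, and both routes in fact produce the bound $\bigl(R\sum_k\sqrt{C(|k|)}\bigr)^{1/2}$, which is the form the paper actually uses when it invokes the lemma.
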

\begin{proof}[Proof of Lemma \ref{lem44}]
Write the LHS of (\ref{8kg}) as the square root of $  \left\langle\sum_{k} T_k^*T_kf,f\right\rangle.  $  For this summation $\sum_k$,  we can apply the  Cotlar-Stein lemma   with $$\|(T_{k_1}^*T_{k_1})(  T_{k_2}^*T_{k_2})\|_{op}\lesssim  \left( \sup_k \|T_k\|_{op} \right)^2 \| T_{k_1}  T_{k_2}^* \| $$   
to obtain (\ref{8kg}).
\end{proof}
\begin{proof}[Proof of Proposition  \ref{prop03}]
Denote   various operator norms by  $\| \cdot\|_{op}$.  In  Lemma \ref{lem44},   put  
\begin{align*} 
T_k:= 2^{k/2}\mathcal{T}_{m_{j,k}}\mathcal{P}_{j+k+\ell} .
\end{align*} 
By this and (\ref{045312}), we control $\left\| T_{k_1} T_{k_2}^*  \right\|_{ L^2(\mathbb{R}^{d+1}\times\mathbb{R})\rightarrow L^2(\mathbb{R}^{d+1}\times\mathbb{R})}$ by
\begin{align*}
  \left\|  2^{k_2/2}\mathcal{T}_{m_{j,k_1 }}  \right\|_{op}\,\cdot \left\| \mathcal{P}_{j+k_1+\ell} \mathcal{P}_{j+k_2+\ell}^* \right\|_{op}\,\cdot \left\| [2^{k_1/2}\mathcal{T}_{m_{j,k_2}}]^* \right\|_{op} \lesssim   2^{-c|k_1-k_2|}\sup_k\left\|2^{k/2}\mathcal{T}_{m_{j,k}}\right\|_{op} ^2 .
 \end{align*}
From this, we  apply  Lemma \ref{lem44} with $C(|k|)= 2^{-c|k|} \sup_k\left\|2^{k/2}\mathcal{T}_{m_{j,k}}\right\|_{op} ^2$ to obtain
\begin{align*}
&  \left\|  \left( \sum_{k\in \mathbb{Z}} |T_k f|^2\right)^{1/2}\right\|_{L^2(\mathbb{R}^{d+1}\times\mathbb{R})} \lesssim \left( \sup_k\left\|2^{k/2}\mathcal{T}_{m_{j,k}}\mathcal{P}_{j+k+\ell} \right\|_{op} \sum_{k\in \mathbb{Z}}  \sqrt{C(|k|)}\right)^{1/2} \left\|  f\right\|_{L^2(\mathbb{R}^{d+1})}\\
 &\qquad\qquad\qquad\lesssim   \left(\sup_k\left\|2^{k/2}\mathcal{T}_{m_{j,k}}\mathcal{P}_{j+k+\ell} \right\|_{op}\right)^{1/2} \left(\sup_k\left\|2^{k/2}\mathcal{T}_{m_{j,k}} \right\|_{op}\right)^{1/2}  \left\| f\right\|_{L^2(\mathbb{R}^{d+1})}.
 \end{align*}
It follows from this and (\ref{00453})   that  for all $f\in L^2(\mathbb{R}^{d+1}$ with $c>0$ independent of $f$, 
\begin{align*} 
&\left\| \left(\sum_{k\in \mathbb{Z}}|2^{k/2}\mathcal{T}_{m_{j,k}}\mathcal{P}_{j+k+\ell} f|^2\right)^{1/2}\right\|_{L^2(\mathbb{R}^{{d+1}}\times \mathbb{R})}  \\
&\qquad\lesssim \begin{cases} 2^{-c|\ell|/2} \left(\sup_k\left\|2^{k/2}\mathcal{T}_{m_{j,k}} \right\|_{op}\right)^{1/2}    \|f\|_{L^2(\mathbb{R}^{d+1})}  \ \text{if $|\ell|\ge Cj$}\\
\left(\sup_k  \left\| 2^{k/2}\mathcal{T}_{m_{j,k}}  \right\|_{op} \right) \|f\|_{L^2(\mathbb{R}^{d+1})} \ \text{if $|\ell|< Cj$.}
   \end{cases}
\end{align*}  
Since $ \left\| \mathcal{T}_{m_{j,0}} \right\|_{op}\gtrsim 1$ (take $\widehat{f}(\xi,\xi_{d+1})=\chi_{B}(\xi,\xi_{d+1})$ in (\ref{i3}) with a small balls $B$), we can replace the exponent $1/2$ with $1$ in the first line of the above inequality.
By summing over $|\ell|\ge Cj$ and $|\ell|<Cj$ in the above, we obtain   Proposition \ref{prop03}. 
\end{proof} 
 
\subsection{Dilation and Localization}
Our matters   now is to estimate $\left\| 2^{k/2}\mathcal{T}_{m_{j,k}}  \right\|_{op}$   in Proposition \ref{prop03}. Then the next lemma tells that  it suffices  to fix  $k=0$ for $\mathcal{T}_{m_{j,k}}f$. 
 \begin{lemma}[Dilation]\label{prop23}
For every  $(j,k)\in\mathbb{Z}_+\times \mathbb{Z}$ and  $p\ge 1$,   it holds that 
\begin{align}\label{yb1}
\left\|2^{k/p}\mathcal{T}_{m_{j,k}}\right\|_{L^p(\mathbb{R}^{{d+1}})\rightarrow L^p(\mathbb{R}^{{d+1}}\times\mathbb{R})} = \left\|\mathcal{T}_{m_{j,0}}\right\|_{L^p(\mathbb{R}^d\times\mathbb{R})\rightarrow L^p(\mathbb{R}^{d+1}\times\mathbb{R})}
\end{align} 
where $C$ is independent of $j,k$, and   $\mathcal{T}_{m_{j,0}}f(x,x_{d+1},t) $  is expressed as
 \begin{align}\label{i3}
 \mathcal{T}_{m_{j,0}}f(x,x_{d+1},t) 
 &=\int
e^{2\pi i\left( \langle (x,x_{d+1}),(\xi,\xi_{d+1})\rangle+t|\xi+\xi_{d+1}A(x)|\right)}\nonumber \\
&\times \chi(t) \chi\left(\frac{t|\xi+\xi_{d+1}A(x)|}{2^j}\right) \widehat{f
 }(\xi,\xi_{{d+1}})d\xi d\xi_{{d+1}}.
\end{align}
 \end{lemma}
  \begin{proof}
 Our proof is   based on non-isotropic dilations  in the appendix (Sec  \ref{app11.2}).
 \end{proof}

 \subsection{Reduction to Uniform $L^2$ estimates}\label{Sec46}
Put $\xi_{{d+1}}=\lambda$ and   rewrite   (\ref{i3}) as 
\begin{align*}
&\mathcal{T}_{m_{j,0}}f(x,x_{{d+1}},t)\\
&\qquad= \int_{\mathbb{R}}
  \left[\chi\left(t\right)   \int_{\mathbb{R}^d}
e^{2\pi i\left(\langle x, \xi \rangle+t|\xi+\lambda A(x)|\right)} 
\chi\left(\frac{t|\xi+\lambda A(x)|}{2^j}\right)\widehat{f
 }(\xi,\lambda)d\xi   \right]  e^{2\pi i x_{d+1} \lambda}d\lambda.
\end{align*}
Let $ \widehat{g_{\lambda}}(\xi)=\lambda^{d/2}\widehat{f}(\lambda \xi,\lambda) $. Then  by using the change of variable $\xi\rightarrow \lambda \xi$ for the above integral, 
$$ \mathcal{T}_{m_{j,0}}f(x,x_{{d+1}},t)= \int_{\mathbb{R}}
 \left[ \mathcal{T}_{j}^{\lambda}g_{\lambda} (x,t ) \right]   e^{2\pi i x_{d+1} \lambda}d\lambda  $$
 where  
\begin{align}
\label{5rg}\mathcal{T}_{j}^{\lambda}g (x,t )&= \lambda^{d/2}  \chi\left(t\right)  \int_{\mathbb{R}^d}
e^{2\pi i\lambda \left(\langle x,  \xi \rangle+t| \xi+A(x)|\right)}
\chi\left(\frac{\lambda t|\xi+A(x)|}{2^j}\right) \widehat{g}(  \xi)d\xi \end{align} 
which is  same   as (\ref{5rpp}).
 Here we  note that $\widehat{g_{\lambda}}(\cdot)$ is a $d$-variable function for each fixed $\lambda\in\mathbb{R}$.
 Then
by applying the Plancherel theorem with respect to $x_{{d+1}}$ variable, whose frequency is denoted by  $\lambda$,   
\begin{align*}
\int\left[\int_{\mathbb{R}} |\mathcal{T}_{m_{j,0}}f(x,x_{{d+1}},t)|^2dx_{d+1} \right]dxdt  &= \int\left[\int_{\mathbb{R}} |\mathcal{T}_{j}^{\lambda}g_{\lambda} (x,t )|^2 d\lambda \right]dxdt\\
&\le \int_{\mathbb{R}} \| \mathcal{T}_{j}^{\lambda}\|_{L^2(\mathbb{R}^d)\rightarrow L^2(\mathbb{R}^{d}\times \mathbb{R})}^2  \|\widehat{g_{\lambda}}(\cdot)\|_{L^2(d\xi)}^2 d\lambda 
\end{align*}
where $\int \|\widehat{g_{\lambda}}(\cdot)\|_{L^2(d\xi)}^2d\lambda=\|f\|_{L^2(\mathbb{R}^{d+1})}^2$.
Thus, it holds that  \begin{align}\label{011}
\left\|\mathcal{T}_{m_{j,0}}\right\|_{L^2(\mathbb{R}^{d+1})\rightarrow L^2( \mathbb{R}^{{d+1}}\times\mathbb{R})}\le  \sup_{\lambda} \left\| \mathcal{T}_{j}^{\lambda} \right\|_{L^2(\mathbb{R}^d)\rightarrow L^2(\mathbb{R}^d\times \mathbb{R})}.
  \end{align} 
Therefore, we conclude that    the estimate of  $\left\|\mathcal{T}_{m_{j,0}}\right\|_{L^2(\mathbb{R}^{d+1})\rightarrow L^2(\mathbb{R}^{d+1}\times \mathbb{R})}$ is reduced to the uniform estimate  in real parameters $\lambda$ of the above oscillatory integral operators $\mathcal{T}_{j}^{\lambda}$ mapping $g\in L^2(\mathbb{R}^{d})\rightarrow \mathcal{T}_{j}^{\lambda}g\in L^2(\mathbb{R}^{d}\times \mathbb{R})$.
  \begin{proof}[Proof of Main Theorem \ref{main3}]
From    (\ref{115a}),(\ref{0453})   and Proposition  \ref{prop03}  together with  (\ref{yb1}) and (\ref{011}),  we obtain that  
\begin{align*}
 \left\|\mathcal{M}_{j} \right\|_{L^2(\mathbb{R}^{d+1})\rightarrow L^2(\mathbb{R}^{d+1})} &\lesssim\left\| \mathfrak{M}_j  \right\|_{L^2(\mathbb{R}^{{d+1}})\rightarrow L^2(\mathbb{R}^{{d+1}})}
 \\
 &\lesssim 2^{j(1/2-(d-1)/2)} 2^{\epsilon j}
\sup_k\left\|2^{k/2}\mathcal{T}_{m_{j,k}}\right\|_{L^2(\mathbb{R}^{{d+1}})\rightarrow L^2(\mathbb{R}^{{d+1}}\times\mathbb{R})}\\
&  \lesssim  2^{j(1/2-(d-1)/2)} 2^{\epsilon j} \left\|\mathcal{T}_{m_{j,0}}\right\|_{L^2([-1,1]^d\times\mathbb{R})\rightarrow L^2([-10,10]^d\times\mathbb{R}]\times I)} \\
&\le2^{j(1/2-(d-1)/2)} 2^{\epsilon j}  \sup_{\lambda} \left\| \mathcal{T}_{j}^{\lambda} \right\|_{L^2(\mathbb{R}^d)\rightarrow L^2(\mathbb{R}^d\times \mathbb{R})} \\ 
&\le 2^{j(1/2-(d-1)/2)} 2^{\epsilon j} 2^{c(A)j/2}.
\end{align*}
From this combined with Proposition  \ref{prop44}, we have
proved the first part of  Main Theorem \ref{main3}, i.e.,   (\ref{s100}) $\Rightarrow $ (\ref{s81}) and (\ref{s82}).
 \end{proof}

 \section{Proof of Main Theorem \ref{main3}; Reduction to  $\mathcal{T}_{\rm{annulus}}^{\lambda}$}\label{sec5}  We prove the remaining part of Main Theorem \ref{main3}, i.e., that if $\det(A)\ne 0$,
 $$\|\mathcal{T}_{\rm{annulus}}^{\lambda}\|_{L^2(\mathbb{R}^d)\rightarrow L^2(\mathbb{R}^d\times \mathbb{R})}\lesssim_{\epsilon} \lambda^{c(A)/2}\ \text{implies}\ \|\mathcal{T}_{j}^{\lambda}\|_{L^2(\mathbb{R}^d)\rightarrow L^2(\mathbb{R}^d\times \mathbb{R})}\lesssim_{\epsilon} 2^{c(A)j/2}\  $$
 for all $j\in\mathbb{Z}_+$. We shall show this  by  applying the H\"{o}rmander theorem to an appropriate dilations  of $\mathcal{T}_{j}^{\lambda}$.
  Let $A\in M_{d\times d}(\mathbb{R})$ be invertible.  
The change of variable $x\rightarrow A^{-1}(x)$ for   $\mathcal{T}_{j}^{\lambda}g$   in  (\ref{5rg}), yields\begin{align*}
\mathcal{T}_{j}^{\lambda}g(A^{-1}(x),t )& =\frac{ \lambda^{d/2}  \chi\left(t\right) }{\det(A)} \int_{\mathbb{R}^d}
e^{2\pi i\lambda \left(\langle A^{-1}(x), \xi \rangle+t| \xi+x|\right)}
\chi\left(\frac{\lambda t|\xi+x|}{2^j}\right) \widehat{g
 }(  \xi)d\xi.
  \end{align*}
     In view of (iii) of Lemma \ref{lem21}, we are allowed to replace $A^{-1}$ with $A$ in the above oscillatory part  and rewrite it as
\begin{align}\label{5ppa}
  \mathcal{T}_{j}^{\lambda}g(x,t )  = \lambda^{d/2}  \chi\left(t\right) \int_{\mathbb{R}^d}
e^{2\pi i\lambda \left(\langle A(x), \xi \rangle+t| \xi+x|\right)}
\chi\left(\frac{\lambda t|\xi+x|}{2^j}\right) \widehat{g
 }(  \xi)d\xi.
\end{align}
Since $ \mathcal{T}_{\rm{annulus}}^{\lambda} $ in (\ref{apl}) is  defined exactly as  the operator $ \mathcal{T}_{j}^{\lambda} $ with $2^j=\lambda$,
the main part of our proof is to treat $  \mathcal{T}_{j}^{\lambda} $ for the case $\lambda\not\approx 2^j$  as it follows.
 \begin{proposition}[Extreme $\lambda$] \label{pr02}
Suppose that   $A\in M_{d\times d}(\mathbb{R})$ is invertible.   Then, there is $C\gg 1$ such that
\begin{align}
 \left\|\mathcal{T}_j^\lambda   \right\|_{L^2(\mathbb{R}^d)\rightarrow L^2(\mathbb{R}^{d+1}) }&\lesssim 1\   \ \text{ if $\left|\frac{2^j}{\lambda}\right|\ge C$, }
\label{546}\\
 \left\|\mathcal{T}_j^\lambda  \right\|_{L^2(\mathbb{R}^d) \rightarrow L^2(\mathbb{R}^{d+1}) }&\lesssim \left|\frac{2^j}{\lambda}\right|^{d/2} \  \  \text{ if $\left|\frac{2^j}{\lambda}\right|\le  1/C$.}\label{545}
\end{align}
\end{proposition}

\begin{proof}[Proof of (\ref{546})]
By using the $L^2$ norm invariance of the dilation $\left(\frac{2^j}{\lambda}\right)^{d/2} \mathcal{T}_{j}^{\lambda}g(\frac{2^j}{\lambda}x,t ) $  and
  the change of variable $\xi\rightarrow \left(\frac{2^j}{\lambda}\right) \xi$ in (\ref{5ppa}), it suffices to deal with the operator
\begin{align} \label{54ss}
 \mathcal{T}_{j}^{\lambda}g( x,t )  =\left(\frac{2^j}{\lambda}\right)^{d}  \lambda^{d/2}  \chi\left(t\right)  \int_{\mathbb{R}^d}
e^{2\pi i\lambda  \left[\left(\frac{2^j}{\lambda}\right)^2  \langle A(x), \xi \rangle+  \left(\frac{2^j}{\lambda}\right)t| \xi+x|  \right]  }
\chi\left(  t|\xi+x|\right) \widehat{g
 }(  \xi)d\xi.
\end{align}
Observe that the support  of the above integral is contained in $\{|x+\xi|\le 4\}$. Thus the support restriction  $|x-n|\le 1$ implies $|\xi+n|\le 5$. From this,
we decompose  $$ \mathcal{T}_{j}^{\lambda}g( x,t )=\sum_{n\in\mathbb{Z}^d}\left(\frac{2^j}{\lambda}\right)^{d}  \lambda^{d/2}  \int e^{i[\,\cdot\,]} \psi(x-n)\psi\left(\frac{\xi+n}{5}\right)\chi(t)\chi\left(  t|\xi+x|\right) \widehat{g
 }(  \xi)d\xi. $$  
Thus, by localization principle, it suffices to deal with one fixed $n\in\mathbb{Z}^d$ in the above summation.
Apply the change of variables $x\rightarrow x+n$ and $\xi\rightarrow \xi-n$. Then the above integral ($n^{th}$ term) becomes $$e^{-2\pi i \lambda  \left[\left(\frac{2^j}{\lambda}\right)^2  (\langle A(x),n\rangle+\langle A(n),n\rangle\right]}\widetilde{\mathcal{T}}_{j}^{\lambda}h( x,t). $$
Here 
\begin{align} \label{5pp}
\widetilde{\mathcal{T}}_{j}^{\lambda}h( x,t ) = \left(\frac{2^j}{\lambda}\right)^{d}  \lambda^{d/2}  \chi\left(t\right)  \int_{\mathbb{R}^d}
e^{2\pi i\lambda  \left(\frac{2^j}{\lambda}\right)^2 \phi(x,t,\xi) }\psi(x)\psi(\xi/5)
\chi\left(  t|\xi+x|\right) h(\xi)d\xi
\end{align}
and $$h(\xi)=e^{2\pi i \lambda  \left[\left(\frac{2^j}{\lambda}\right)^2  \langle A(n),\xi\rangle\right]}\widehat{g
 }(  \xi-n)\ \text{and}\    \phi(x,t,\xi)= \langle A(x), \xi \rangle+  \left(\frac{2^j}{\lambda}\right)^{-1}t| \xi+x|. $$ 
 Fix $|t|\approx 1$. Then 
  the determinant of the $d\times d$ mixed hessian matrix of   $\phi(x,t,\xi)$ is
\begin{align*}
\det( [\phi_{x_i\xi_j}''(x,t,\xi)])=\det(A)+O\left(\left(\frac{2^j}{\lambda}\right)^{-1}\right)\approx \det(A)\gtrsim 1
\end{align*}
which follows from the multilinearity of $\det$ combined with   $\left(\frac{2^j}{\lambda}\right)^{-1}\ll 1$. This enables us to apply the H\"{o}rmander  theorem for (\ref{5pp}) to obtain that
 $$\|  [\widetilde{\mathcal{T}}_{j}^{\lambda}g](\cdot,t) \|_{L^2(\mathbb{R}^d)}\le C\left(\frac{2^j}{\lambda}\right)^{d}  \lambda^{d/2} \left(\lambda \left(\frac{2^j}{\lambda}\right)^2\right)^{-d/2}  \|g\|_{L^2(\mathbb{R}^d)}=C\|g\|_{L^2(\mathbb{R}^d)} $$
with $C$  independent of $t$. So   integrate $\|  [\widetilde{\mathcal{T}}_{j}^{\lambda}g](\cdot,t) \|_{L^2(\mathbb{R}^d)}^2$ along $dt$ to obtain (\ref{546}).
\end{proof}
\begin{remark}\label{rek1}
If $2^j\gg \lambda$, we can  prove (\ref{546}) for $\mathcal{T}_j^\lambda$ in   (\ref{5rg}) without   $\text{rank}(A) =d$ since the phase function
$\left(\frac{2^j}{\lambda}\right)^2 \left( \langle x, \xi \rangle+  \left(\frac{2^j}{\lambda}\right)^{-1}t| \xi+A(x)| \right)$ has non-degenerate hessian for a fixed $t$.
 See    Proposition \ref{prop111} in the appendix.
\end{remark}

\begin{lemma}\label{lem977}
Let $F(x,t,y)=t|x+y|$. 
On the region $\{(x,t,y): |x_k+y_k|\ge |x+y|/d, |t|\approx 1\ \text{and}\ |x+y|\approx 1\}$, it holds that
$$\det [F''_{(x_1\cdots x_{k-1}x_{k+1}\cdots x_d t)(y_1\cdots y_d)}(x,t,y)]  \approx \frac{t^{d-1}}{|x+y|^{d-1}}.$$
\end{lemma}
\begin{proof}
Let $u=(x+y)$ and $u_i=x_i+y_i$. The   $(d+1)\times d$ mixed hessian matrix of $F$ is
$$[ F_{(x_1\cdots x_d t)(y_1\cdots y_d)}''(x,t,y)]=
  \left(\begin{matrix}
F_{x_1y_1}&\cdots&F_{x_1y_d} \\
F_{x_2y_1}&\cdots&F_{x_2y_d}\\
\vdots\\
F_{x_dy_1}&\cdots&F_{x_dy_d}\\
F_{ty_1}&\cdots&F_{ty_d}\\
 \end{matrix}  \right)=  \left(\begin{matrix}
t|u|^{-3}(\alpha_{1j}) \\
t|u|^{-3}(\alpha_{2j})\\
\vdots\\
t|u|^{-3}(\alpha_{dj})\\
|u|^{-1}(\alpha_{(d+1)j})\\
 \end{matrix}  \right)
$$
where for each fixed $i=1,\cdots,d$,
$$\alpha_{ij}=\begin{cases} -u_iu_j\ \text{if}\ j\ne i\\
|u|^2-u_i^2\ \text{if}\ j=i
\end{cases}\ \text{and}\ \alpha_{(d+1)j}=u_j.$$
Fix $k=1,\cdots,d$. If $u_k=0$, then $(\alpha_{kj})_{j=1}^d=(0,\cdots,0,|u|^2,0,\cdots,0)$ where $|u|^2$ is located on the $k^{th}$ component.
If $u_k\ne 0$, then for the $d\times (d+1)$ matrix $(\alpha_{ij})$, we
apply an elementary row operation replacing $[k^{th}\ \text{row}]$ with $[k^{th}\ \text{row}+u_k\times (d+1)^{th}\ \text{row}]$ as $$(\alpha_{kj})_{j=1}^d+u_k(\alpha_{(d+1)j})_{j=1}^d=(0,\cdots,0,|u|^2,0,\cdots,0)   $$
where $  |u|^2 $ is located on the $k^{th}$ component. Thus it holds that
$$\left(\begin{matrix}
 (\alpha_{1j}) \\
 (\alpha_{2j})\\
\vdots\\
 (\alpha_{dj})\\
 (\alpha_{(d+1)j})\\
 \end{matrix}  \right)
\sim  
\left(\begin{matrix}
|u|^2&0&0 &0 \\
0&|u|^2&0 &0\\
\vdots&\vdots &\ddots&\vdots\\
0&0&0&|u|^2\\
u_1&\cdots &u_{d-1}&u_d\\
 \end{matrix}  \right).
 $$
From this and the support condition $|u_k|\gtrsim |u|$,  we compute the determinant of the above mixed hessian matrix after removing the $k^{th}$ row to obtain that
$$\det [F''_{(x_1\cdots x_{k-1}x_{k+1}\cdots x_d t)(y_1\cdots y_d)}(x,t,y)]=\frac{u_kt^{d-1}|u|^{2(d-1)}}{|u|\cdot |u|^{3(d-1)}}\approx \frac{t^{d-1}}{|u|^{d-1}}.$$
This finish the proof of Lemma \ref{lem977}.
\end{proof}
\begin{proof}[Proof of (\ref{545})]
This is the case $    \frac{2^j}{\lambda}\ll 1$. By the similar localization as above, it suffices to work with
\begin{align}\label{445}
 \mathcal{T}_{j}^{\lambda}g( x,t )  =\left(\frac{2^j}{\lambda}\right)^{d}  \lambda^{d/2}  \chi\left(t\right)  \int_{\mathbb{R}^d}
e^{2\pi i\lambda \frac{2^j}{\lambda}  \phi(x,t,\xi)   }
\psi(x)\psi(\xi/5)\chi\left(  t|\xi+x|\right) \widehat{g
 }(  \xi)d\xi
\end{align}
where 
$$ \phi(x,t,\xi)=\frac{2^j}{\lambda}  \langle A(x), \xi \rangle+ t| \xi+x|. $$
It suffices to work with the restriction $|\xi_k+x_k|\ge |\xi+x|/d$ by inserting the cutoff function
$\psi\left(\frac{|\xi+x|}{d|\xi_k+x_k|}\right)$. On the support of this cutoff function for $u=x+\xi$ and $u_k=x_k+\xi_k$ in (\ref{445}), we apply Lemma \ref{lem977} combined with the multi-linearity of $\det$ to obtain that for a fixed $x_k$,
$$ \det [ \phi'' _{(x_1\cdots x_{k-1}x_{k+1}\cdots x_d t)(\xi_1\cdots \xi_d)}(x,t,\xi)]=O\left(\frac{2^j}{\lambda}\right) + \frac{t^{d-1}}{|u|^{d-1}}   \approx 1.$$
This enables us to apply the H\"{o}rmander  theorem to obtain that
 $$\|  [\mathcal{T}_{j}^{\lambda}g](x_k,\cdot) \|_{L^2(\mathbb{R}^d)}\le C\left(\frac{2^j}{\lambda}\right)^{d}  \lambda^{d/2}   2^{-jd/2}  \|g\|_{L^2(\mathbb{R}^d)}=C\left(\frac{2^j}{\lambda}\right)^{d/2} \|g\|_{L^2(\mathbb{R}^d)}. $$
 Since $C$ is independent of $x_k$, we integrate $\|  [\mathcal{T}_{j}^{\lambda}g](\cdot,x_k) \|_{L^2(\mathbb{R}^d)}^2$ with respect to $dx_k$  in the support of (\ref{445}) to obtain (\ref{545}).
\end{proof}
Therefore we finished the proof of Proposition \ref{pr02}. 
\subsection{Reduction to $\lambda=2^j$}
Suppose that $\text{rank}(A)=d$.  We have finished the proof of   the proposition \ref{pr02} treating $\lambda\not\approx 2^j$. So as to estimate $\|\mathcal{T}^\lambda_j\|_{op}$,   there remains the   case $\lambda\approx 2^j$   in (\ref{5pp}).  Apply the dilation invariance  under the norms of $L^2(dx)$ and $L^2(d\xi)$ as $$\left(\frac{2^j}{\lambda}\right)^{d/2}\mathcal{T}_j^\lambda g\left( \frac{2^j}{\lambda}x,t  \right)\ \text{and}\ \widehat{h}(\xi)=\left(\frac{2^j}{\lambda}\right)^{d/2}\widehat{g}\left( \frac{2^j}{\lambda}\xi \right).$$   Then by the change of variable $\xi\rightarrow \frac{2^j}{\lambda}\xi$  in  (\ref{5pp}), we work with  the $L^2$ estimate for
 \begin{align*}
\left(\frac{2^j}{\lambda}\right)^{d/2}\mathcal{T}_j^\lambda g\left( \frac{2^j}{\lambda}x,t  \right)&=  \left(\frac{2^j}{\lambda}\right)^{d/2} \lambda^{d/2}  \chi\left(t\right)  \int_{\mathbb{R}^d}
e^{2\pi i 2^j \left(\left\langle  A(x), \xi \right\rangle+t| \xi+x|\right)}
\chi\left(  t|\xi+x| \right) \widehat{h
 }(  \xi)d\xi.  \end{align*} 
Due to $\frac{2^j}{\lambda}\approx 1$,  it suffices to   fix the number $2^j=\lambda  $ for the $L^2$ estimation  to rewrite it as
\begin{align*}
\mathcal{T}_{\rm{annulus}}^\lambda g(x,t)= \lambda^{d  /2}  \chi\left(t\right)    \int_{\mathbb{R}^d}
e^{2\pi i \lambda \left(\langle A(x), \xi \rangle+t| \xi+x|\right)}
\chi\left(  t|\xi+x| \right) \widehat{g
 }(  \xi)d\xi
 \end{align*}
 where we can insert $\psi(x) $ to localize $|x|\lesssim 1$ as in (\ref{5pp}). This operator is what we  have considered in (\ref{apl}). Therefore,  when $A$ invertible $A$,
  the estimate of  (\ref{s100}) follows from  \begin{align*} 
  \|\mathcal{T}_{\rm{annulus}}^{\lambda}\|_{L^2(\mathbb{R}^d)\rightarrow L^2(\mathbb{R}^d\times \mathbb{R})}\le C\lambda^{\epsilon} \lambda^{ c(A)/2} \ \text{ for  all large $\lambda\gg 1$ }
  \end{align*}
which is (\ref{s83}). This completes the proof of Main Theorem \ref{main3}.

 \subsection{Statement of the Main Estimates} 
We have proved Main Theorem \ref{main3} stating that  the sufficient part of Main Theorems 1 and 2 follows from the estimate (\ref{s100}) or simply (\ref{s83}).
  We now state the  estimates regarding  (\ref{s100}) and (\ref{s83}).  At first, the spherical maximal   theorem   (Main Theorem 2) follows from Theorem \ref{th44} below   
 \begin{theorem}\label{th44}
  If all eigenvalues of $A\in M_{d\times d}(\mathbb{R})$  has nonzero imaginary part, then  it holds 
\begin{align*} 
 \left\| \mathcal{T}_{\rm{annulus}}^\lambda  \right\|_{ L^2(\mathbb{R}^{d})\rightarrow L^2(\mathbb{R}^{d}\times \mathbb{R})} &\lesssim_{\epsilon}   \lambda^{0}\ \text{for  $\lambda\ge 1$. } 
 \end{align*}
\end{theorem}
We can obtain the annulus maximal result for  $\text{rank}(A)=2$ in Main Theorem 1     from 
\begin{theorem}\label{th4}
Let $A\in M_{2\times 2}(\mathbb{R})$  with $\text{rank}(A)= 2$ and $\lambda\ge 1$. Then it holds that
\begin{align*}
\text{If $\text{rank}\left(JA+(JA)^T\right)=2$,\ then}\  &\left\| \mathcal{T}_{\rm{annulus}}^\lambda  \right\|_{ L^2(\mathbb{R}^{2})\rightarrow L^2(\mathbb{R}^{2}\times \mathbb{R})}  \lesssim_{\epsilon} \lambda^{0},
 \\
  \text{If $\text{rank}\left(JA+(JA)^T\right)=0$,\ then}\  &\left\| \mathcal{T}_{\rm{annulus}}^\lambda  \right\|_{ L^2(\mathbb{R}^{2})\rightarrow L^2(\mathbb{R}^{2}\times \mathbb{R})}  \lesssim_{\epsilon} \lambda^{1/2},
 \\
    \text{If $\text{rank}\left(JA+(JA)^T\right)=1$,\ then}\  &\left\| \mathcal{T}_{\rm{annulus}}^\lambda  \right\|_{ L^2(\mathbb{R}^{2})\rightarrow L^2(\mathbb{R}^{2}\times \mathbb{R})}  \lesssim_{\epsilon} \lambda^{1/6}. 
  \end{align*}
These are the growth rates    $c(A)/2=0,1/6,1$  in (\ref{314a}).
  \end{theorem}
Finally,   the annulus maximal case of $\text{rank}(A)=1$ (Main Theorem 1)  follows from 
  \begin{theorem}\label{th441}  
Let $\mathcal{T}_j^{\lambda}$  be defined as in (\ref{5rpp}). Suppose that  $\text{rank}(A)=1$.    Then
    \begin{align*}
 \left\| \mathcal{T}_j^{\lambda} \right\|_{L^2(\mathbb{R}^2 )\rightarrow L^2(\mathbb{R}^2\times\mathbb{R})} &\lesssim 2^{\epsilon j}\ \text{uniformly in $\lambda\in\mathbb{R}$}.
\end{align*}
This is the growth rate $c(A)/2=0$ in  (\ref{314a}).
\end{theorem}

 \section{Proof of Theorem \ref{th44} } \label{secc6}
 
 \subsection{Kernel of  $[\mathcal{T}_{\rm{annulus}}^\lambda]^* \mathcal{T}_{\rm{annulus}}^\lambda$}
 Write the  integral kernel $K(\xi,\eta)$ of $[\mathcal{T}_{\rm{annulus}}^\lambda]^*\mathcal{T}_{\rm{annulus}}^\lambda$  of  (\ref{5pp}) as in (\ref{28n})
\begin{align}\label{10029}
K(\xi,\eta) &=  \lambda^{d} \int_{\mathbb{R}^{d+1}} e^{i\lambda \Phi(x,t,\xi,\eta) }\Psi(x,t,\xi,\eta) dxdt
 \end{align}
where  
\begin{itemize}
\item the phase function $\Phi(x,t,\xi,\eta)=\langle A^T(\xi-\eta), x\rangle +t\left(|\xi+x|-|\eta+x|\right)$,
\item   the amplitude $\Psi(x,t,\xi,\eta)= \chi(t)\psi\left(x\right)  \chi\left( t|\xi+x| \right) 
 \chi\left( t|\eta+x| \right)    $
 \end{itemize}
have the derivatives  
\begin{align*}
& \nabla_{x} \Phi(x,t,\xi,\eta)  =  A^T(\xi-\eta) + t \left( \frac{\xi+x}{|\xi+x|}-  \frac{\eta+x}{|\eta+x|} \right)\ \text{and}\
  \partial_{t} \Phi(x,t,\xi,\eta)  =   |\xi+x|-|\eta+x|,   \\
 &\langle v, \nabla_x\rangle^m \left(\Psi(x,t,\xi,\eta) \right) =O(1)\ \text{for $|v|=1$}\ \text{and}\ 
 (\partial_t)^m   \left(\Psi(x,t,\xi,\eta) \right) =O(1).  
 \end{align*}  
 We apply Schur's test for the operator  $[\mathcal{T}_{\rm{annulus}}^\lambda]^*\mathcal{T}_{\rm{annulus}}^\lambda$, estimating the $L^1$ norm of the kernel along one side with the other  fixed,
 $$\int |K(\xi,\eta)|d\xi\  \ \text{and}\ \int |K(\xi,\eta)|d\eta.$$
 Indeed, it suffices to estimate one of the above two integrals due to symmetry of $\xi$ and $\eta$ in the definition of $K$. Moreover, without loss of generality, we can assume that $\lambda>0$ in this paper.

\subsection{Estimate  for large $\partial_t\Phi$}\label{sec99} 
We first reduce the support of  (\ref{10029}) to the region:
  $$ \left| |\xi+x| -|\eta+x|\right| \le \frac{\lambda^{\epsilon}}{\lambda}   \ \text{and}\    |\xi-\eta|\ge \frac{\lambda^{2\epsilon}}{\lambda} . $$
 
    \begin{proposition}\label{prop700}
Suppose that  $A\in M_{d\times d}(\mathbb{R})$ is invertible. Recall that 
\begin{align}\label{pc98}
K_{\rm{main}}(\xi,\eta)&:= \lambda^{d}\int_{\mathbb{R}^{d+1} } e^{2\pi i \lambda
\Phi(x,t,\xi,\eta)}
 \Psi(x,t,\xi,\eta)\nonumber\\
&
\times\psi\left(\frac{ |\xi+x| -|\eta+x|
  }{\frac{\lambda^{\epsilon}}{\lambda} }\right)  dxdt \left(1-\psi\left( \frac{|\xi-\eta|}{\frac{\lambda^{2\epsilon}}{\lambda}} \right)\right).   
 \end{align}
 Given the  kernel $K$ in (\ref{10029}),    it holds that for any small  
 $\epsilon>0$,
\begin{align}\label{kc201}
\sup_{\eta}\int_{\mathbb{R}^{d}} \left|K (\xi,\eta) \right|d\xi     \lesssim  \lambda^{2d\epsilon  }+\sup_{\eta}\int_{\mathbb{R}^{d}} \left|K _{\rm{main}}(\xi,\eta) \right|d\xi 
 \end{align}
  where  $\xi$ and $\eta$   can be switched.
  \end{proposition}

\begin{proof}[Proof of Proposition \ref{prop700}]
We first treat   a good portion of $K(\xi,\eta)$ in (\ref{10029}) defined by
\begin{align}
&K_{\rm{time osc}}(\xi,\eta)\nonumber\\
&\qquad:= \lambda^{d}\int_{\mathbb{R}^{d+1} } e^{2\pi i \lambda
\Phi(x,t,\xi,\eta)}
 \Psi(x,t,\xi,\eta)(1- \psi)\left(\frac{ |\xi+x| -|\eta+x|
  }{\frac{\lambda^{\epsilon}}{\lambda } }\right)  dxdt      \label{h03}
 \end{align}
which has a enough oscillation    as its time derivative $\partial_t\Phi$ has the lower bound $ \lambda^\epsilon/\lambda$.
The lower bound of $|\partial_t\Phi(x,t,\xi,\eta)|= \big| |\xi+x| -|\eta+x|\big|\gtrsim \lambda^{\epsilon}/\lambda  $ leads  the    estimates:
 \begin{align}
  \sup_{\eta}\int_{\mathbb{R}^{d}} \left|K_{\rm{timeosc}}(\xi,\eta) \right|d\xi     &\lesssim  \lambda^{-N} \ \text{ where we can switch $\xi$ and $\eta$.}
  \label{5kg}
  \end{align}
  \begin{proof}[Proof of (\ref{5kg})]
 For this case,  
 \begin{itemize}
 \item derivatives   $|\partial_t\Phi|\gtrsim \frac{\lambda^{\epsilon}}{\lambda} $ in (\ref{10029}) and (\ref{h03})
 \item derivative $\partial_t$ of cutoff function $=O(1)$ in (\ref{10029}) and  (\ref{h03})
 \item   the measures   $dx=O(1)$  and   $d\xi =O(1)$  in (\ref{10029}) . 
  \end{itemize}
So, we   apply integration by parts $N>\frac{M}{\epsilon} $ times with respect to $dt$ 
in   (\ref{h03}),
\begin{align*}
\sup_{\eta}\int |K_{\rm{timeosc}}(\xi,\eta)|d\xi&\lesssim  \lambda^{d}\int_{\mathbb{R}^d} \int_{\mathbb{R}^{d}\times\mathbb{R}^{1}} \lambda^{-\epsilon N} 
 \Psi(x,t,\xi,\eta)  dxdt d\xi\lesssim \lambda^{-(M-d)}\end{align*}
where $M> d$. 
\end{proof}
From (\ref{5kg}), it suffices to estimate $\int |K(\xi,\eta)-K_{\rm{timeosc}}(\xi,\eta)|d\xi$. It holds that 
\begin{align}
\sup_{\eta}\int \left|(K(\xi,\eta)-K_{\rm{timeosc}}(\xi,\eta))\psi\left( \frac{|\xi-\eta|}{\frac{\lambda^{2\epsilon}}{\lambda}} \right)\right|d\xi&\lesssim  \lambda^{2d\epsilon}.\label{43k7}
\end{align}
because the measure   $d\xi  $  is $  = O\left(\left| (\lambda^{2\epsilon}/\lambda)\right|^d\right)$  with $dx=O(1)$.
Note from (\ref{pc98}) and (\ref{h03}),  $$(K(\xi,\eta)-K_{\rm{timeosc}}(\xi,\eta))\left(1- \psi\left( \frac{|\xi-\eta|}{\frac{\lambda^{2\epsilon}}{\lambda}} \right)\right)=K_{\rm{main}}(\xi,\eta) .$$ This together with (\ref{5kg}) and (\ref{43k7}) yields (\ref{kc201}). 
We finish the proposition \ref{prop700}.  
\end{proof}
\subsection{Derivative of Phase Function}
We express the $x$-derivative of the phase function $\Phi$  of the integral kernel  of the operator of (\ref{pc98}) as the following simple form.
 \begin{proposition}\label{prop701}
For $(\xi,\eta)\in \text{supp}(K_{\rm{main}})$ in (\ref{pc98}), the phase   $\Phi $   satisfies that 
   \begin{align} \label{409}  
 \nabla_x \Phi(x,t,\xi,\eta)& =  A^T(\xi-\eta) + t \left( \frac{\xi-\eta}{|\xi+x|}+(\eta+x)  \left(  \frac{1}{|\xi+x|} -   \frac{1}{|\eta+x|}\right)   \right)\nonumber\\
 &= \left(A^T+    \frac{ t }{|\xi+x|} I\right)(\xi-\eta)   +O\left(\frac{\lambda^{\epsilon}}{\lambda}\right)\ \text{with}\ |\xi-\eta|\ge \lambda^{2\epsilon}/\lambda.
   \end{align} 
  \end{proposition}

\begin{proof}[Proof of (\ref{409})]
From the support condition of  $K_{\rm{main}}$   in   (\ref{pc98}),
\begin{align*}
|x|\lesssim 1,\ |\xi+x|\approx|\eta+x|\approx  t\approx 1\ \text{and}\ 
\bigg| |\xi+x|- |\eta+x|\bigg|\le\frac{\lambda^{\epsilon }}{\lambda}. 
\end{align*}
This condition implies that
\begin{align*}  
&\qquad   \left|(\eta+x)  \left( \frac{1}{|\xi+x|} - \frac{1}{|\eta+x|}\right)\right| \approx   \bigg| |\xi+x|- |\eta+x|\bigg|\le  \frac{\lambda^{ \epsilon}}{\lambda}.  
 \end{align*}  
This is the error term of  $  \nabla_x \Phi(x,t,\xi,\eta) $ in the second line below    
  \begin{align*}
 \nabla_x \Phi(x,t,\xi,\eta)&=   A^T(\xi-\eta) + t \left( \frac{\xi+x}{|\xi+x|}-  \frac{\eta+x}{|\eta+x|} \right)\\
 &= A^T(\xi-\eta) +t\left( \frac{\xi-\eta}{|\xi+x|}  +(\eta+x)  \left( \frac{1}{|\xi+x|} - \frac{1}{|\eta+x|} \right) \right)    \\
&= \left(A^T+  
  \frac{t}{|\xi+x|}\right) \left(\xi-\eta\right)+O\left(\frac{\lambda^{\epsilon}}{\lambda}\right).  
 \end{align*}
  Here   $|\xi-\eta|\ge \lambda^{2 \epsilon}/\lambda$ from the support condition of (\ref{pc98}) .     \end{proof}

  \subsection{Estimate of $\int |K_{\rm{main}}(\xi,\eta)|d\xi$ for Proof of Theorem \ref{th44} }
To prove Theorem 
\ref{th44}, from Propositions  \ref{pr02} and \ref{prop700}, it suffices to   claim that 
\begin{align}
\sup_{\eta}\int \left| K_{\rm{main}}(\xi,\eta) \right|d\xi&\lesssim  1.\label{5kgkg}
\end{align}
   \begin{proof}[Proof of (\ref{5kgkg})]
Since $A$ has no real eigenvalue and $\frac{t}{|\xi+x|}\approx 1$ in (\ref{pc98}),   the matrix $A^T+\frac{t}{|\xi+x|}I$   in   (\ref{409})    is     invertible.  Thus in (\ref{409}),
  $$ \left| \left(A^T+ 
  \frac{ t}{|\xi+x|}I\right)\left(\xi-\eta\right)\right|\approx|\xi-\eta|\ge \lambda^{2 \epsilon}/\lambda.$$
On the other hand, we can decompose the support $\big\{(x,t): \big|\frac{ t}{|\xi+x|}\big|\approx 1 \big\}$ into   finer but  finitely many  separate pieces so that $ t/|\xi+x|$ is close to one fixed number $\approx 1$ on each slice.  Thus, on each piece, we can find a unit vector $v$, almost parallel to $\left(A^T+
  \frac{ t}{|\xi+x|}I\right)\left(\xi-\eta\right) $, not  depending on $x$ satisfying that  
   \begin{align} 
 |\langle v,\nabla_x \rangle \Phi(x,t,\xi,\eta)|& \approx \left| \left(A^T+
  \frac{ t}{|\xi+x|}I\right)\left(\xi-\eta\right)\right|\approx|\xi-\eta| \gtrsim \lambda^{2\epsilon }/\lambda.  \label{pc202} \end{align}  
 The higher derivatives of  $\langle v,\nabla_x \rangle \Phi(x,t,\xi,\eta)$ along any unit vector $v$ is majorized by
  $C|\xi-\eta|$.  As in (\ref{409}), it holds that  
\begin{align}\label{6613}
\nabla_x(|\xi+x|-|\eta+x|)=\frac{\xi+x}{|\xi+x|}-\frac{\eta+x}{|\eta+x|} =t\frac{\xi-\eta}{|\xi+x|}+O(\lambda^{\epsilon-1}).
 \end{align}
  So, we can compute that
  \begin{align*} 
 \left| \langle v,\nabla_x \rangle \psi\left(\frac{ |\xi+x| -|\eta+x|
  }{\frac{\lambda^{\epsilon}}{\lambda}}\right)\right|&=\left|  \psi'\left(\cdot  \right) \left\langle v,\,  t\frac{\xi-\eta}{|\xi+x|}+O(\lambda^{\epsilon-1}) \right\rangle\right|\nonumber\\
  &\lesssim \left|\frac{\lambda}{\lambda^{\epsilon}} \left(  |\xi-\eta| +  \frac{\lambda^{\epsilon}}{\lambda}\right) \right|
   \end{align*}  
leading its higher derivatives along any unit vector $v$
   \begin{align} \label{613}
\left|\langle v, \nabla_x \rangle^m \left[\psi\left(\frac{ |\xi+x| -|\eta+x|
  }{\frac{\lambda^{\epsilon}}{\lambda}}\right) \right]\right|=O\left(  \left[\frac{\lambda}{\lambda^{\epsilon}}  | \xi-\eta |+1\right]^m \right).
   \end{align} 
Thus, for $\Psi= \text{amplitidue of (\ref{pc98})} $, we have an upper bound   as
\begin{align}\label{61kk}
  | \langle v, \nabla_x \rangle^m \Psi(x,t,\xi,\eta)|\lesssim  \left[\frac{\lambda}{\lambda^{\epsilon}}  | \xi-\eta |+1\right]^m.
  \end{align}
So, we  utilize  the product rule  with (\ref{61kk}) and (\ref{pc202}) to compute  possible upper bounds to obtain   that
  $$ \left|\left[\langle v,\nabla_x \rangle \frac{1}{\lambda \langle v,\nabla_x \rangle \Phi(x,t,\xi,\eta)}\right]^m\Psi(x,t,\xi,\eta)\right|\le      
\left| \frac{\lambda^{1-\epsilon}   |\xi-\eta|}{\lambda|\xi-\eta|} \right|^{m} +   
\left| \frac{ 1}{\lambda|\xi-\eta| +1} \right|^{m}.$$
This enables  us to apply the integration by parts $N $  times with $ N\epsilon\ge 2d+1$ to obtain that
\begin{align*}
  \left|K_{\rm{main}}(\xi,\eta)\right| &\lesssim  \lambda^{d}   
\left| \frac{\lambda^{1-\epsilon}  }{\lambda } \right|^{N} +\lambda^{d}   
\left| \frac{ 1}{\lambda|\xi-\eta| +1} \right|^{N}. 
  \end{align*}
This with   $|\xi|,|\eta|\le 10$ in (\ref{pc98}) gives the desired bound $\int   \left|K_{\rm{main}}(\xi,\eta)\right|  d\xi= O(1)$ in (\ref{5kgkg}).
\end{proof} 
Hence,  we finish the proof of  Theorem \ref{th44} from  (\ref{5kgkg}) and (\ref{kc201}).

 \section{Proof  of Theorem \ref{th4}  }\label{sec6}
\subsection{Proof  of Theorem \ref{th4} for $\text{rank}(JA+(JA)^T)=2,0$.} Remind that in Theorem \ref{th4}, we assumed  $\text{rank}(A)=2$. 
From  
 Proposition  \ref{prop700}  for $d=2$, it suffices to show that,
\begin{align}\label{cb11}
\sup_{\eta}\int \left|K_{\rm{main}}(\xi,\eta)  \right|d\xi&\lesssim_{\epsilon} \begin{cases}  \lambda^{0}\ \text{if $\text{rank}\left(JA+(JA)^T\right)=2$},
\\
  \lambda^{1} \ \text{if $\text{rank}\left(JA+(JA)^T\right)=0$.}
\end{cases}
\end{align}
  Recall that $K_{\rm{main}}(\xi,\eta)$   in Propositions \ref{prop700} and \ref{prop701} for $d=2$ is
 \begin{align}\label{pc981}
K_{\rm{main}}(\xi,\eta)&:= \lambda^{2}\int_{\mathbb{R}^{2+1} } e^{2\pi i \lambda
\Phi(x,t,\xi,\eta)}
 \psi(x)\chi(t)\chi\left( 
t |\xi+ x| \right)\chi\left(  t |\eta+
x| \right)\nonumber\\
&
\times\psi\left(\frac{ |\xi+x| -|\eta+x|
  }{\frac{\lambda^{\epsilon}}{\lambda} }\right)  dxdt \left(1-\psi\left( \frac{|\xi-\eta|}{\frac{\lambda^{2\epsilon}}{\lambda}} \right)\right),\\   
\Phi(x,t,\xi,\eta)&=\langle A^T(\xi-\eta), x\rangle +t\left(|\xi+x|-|\eta+x|\right),\nonumber\\
 \nabla_x \Phi(x,t,\xi,\eta)& = \left(A^T+    \frac{ t }{|\xi+x|} I\right)(\xi-\eta)   +O\left(\frac{\lambda^{\epsilon}}{\lambda}\right)\ \text{with}\ |\xi-\eta|\ge \lambda^{2\epsilon}/\lambda.\nonumber
   \end{align} 
This combined with  the observation
$$  \left\langle v,  \frac{J(\xi-\eta)}{|\xi-\eta|}\right\rangle=0\  \text{for}\ v=\frac{J(\xi-\eta)}{|\xi-\eta|}$$
implies that   
\begin{align}
\langle v,\nabla_x\rangle \Phi&=  \left\langle A^T\left(\xi-\eta\right),   \frac{J(\xi-\eta)}{|\xi-\eta|} \right\rangle +O(\lambda^{\epsilon}/\lambda).  \label{att} 
\end{align}
 According to the size  $ \left\langle A^T\left(\xi-\eta\right),   \frac{J(\xi-\eta)}{|\xi-\eta|} \right\rangle$ in (\ref{att}), 
we split     (\ref{pc981}): $$K_{\rm{main}}(\xi,\eta) =K_{\rm{main}}^{\rm{good}}(\xi,\eta)+K_{\rm{main}}^{\rm{bad}}(\xi,\eta)$$ where
\begin{align*}
K_{\rm{main}}^{\rm{good}}(\xi,\eta)&=K_{\rm{main}}(\xi,\eta)(1-\psi)\left( \frac{ \left\langle A^T\left(\xi-\eta\right),   \frac{J(\xi-\eta)}{|\xi-\eta|} \right\rangle }{2^{10}\lambda^{2\epsilon  }\lambda^{-1} }\right),  \\
  K_{\rm{main}}^{\rm{bad}}(\xi,\eta)&=K_{\rm{main}}(\xi,\eta) \psi\left( \frac{ \left\langle A^T\left(\xi-\eta\right),   \frac{J(\xi-\eta)}{|\xi-\eta|} \right\rangle }{ 2^{10} \lambda^{2\epsilon } \lambda^{-1}   }\right).   \end{align*}
 {\bf Case  $K^{\rm{good}}_{\rm{main}} $}. 
The support   of $K^{\rm{good}}_{\rm{main}}(\xi,\eta)$ in (\ref{att}) gives the lower bound 
\begin{align*} 
|\langle v,\nabla_x\rangle \Phi|\gtrsim \lambda^{2\epsilon } \lambda^{-1}.
\end{align*}
 On the other hand,  $v\perp (\xi-\eta)$ also implies $[\langle v,\nabla_x \rangle]^m\Psi=O(1)$, which follows from 
\begin{align*} 
  \langle v,\nabla_x \rangle \psi\left(\frac{ |\xi+x| -|\eta+x|
  }{\frac{\lambda^{\epsilon}}{\lambda}}\right) &=  \psi'\left(\cdot  \right)\frac{\lambda}{\lambda^{\epsilon}} \left\langle v,\,  \frac{\xi-\eta}{|\xi+x|}+(\eta+x)  \left( \frac{1}{|\xi+x|} - \frac{1}{|\eta+x|} \right)  \right\rangle \nonumber\\
  &=  \psi'\left(\cdot  \right) \frac{\lambda}{\lambda^{\epsilon}}\left\langle v, \eta+x\right\rangle  \left( \frac{1}{|\xi+x|} - \frac{1}{|\eta+x|} \right) =O(1)
   \end{align*}  
Thus,  by applying  integration  by parts $N\gg 1$  times with respect to $dx$ for  (\ref{pc981}),
\begin{align*}
\sup_{\eta}\int |K_{\rm{main}}^{\rm{good}}(\xi,\eta)|d\xi \lesssim \frac{\lambda^{2}}{|\lambda \left( \lambda^{2\epsilon }\lambda^{-1}\right)|^{N} }\lesssim 1\ \text{for $2\epsilon N\ge 2$}. 
\end{align*}
 {\bf Case} $K_{\rm{main}}^{\rm{bad}} $.  
We shall claim     that  
\begin{align}\label{fp1}
\sup_{\eta}\int |K^{\rm{bad}}_{\rm{main}}(\xi,\eta)|d\xi \lesssim_{\epsilon} \begin{cases}  \lambda^{0}  \  \text{if 
$\text{rank}\left(JA+(JA)^T\right)=2$,} \\
 \lambda^{1} \ \text{ if  $\text{rank}\left(JA+(JA)^T\right)=0$.}
  \end{cases}
\end{align}
\begin{proof}[Proof of (\ref{fp1})]  
  It suffices to deal with $K^{\rm{bad}}_{\rm{main}}$ supported on the set
\begin{align*} \Gamma_{\rm{main}}^{\rm{bad}}=\left\{(\xi,\eta)\in\mathbb{R}^2\times\mathbb{R}^2:  \left\langle A^T\left(\xi-\eta\right),   J(\xi-\eta)  \right\rangle  \le |\xi-\eta|  2^{10}\lambda^{2\epsilon}\lambda^{-1} \right\}. 
\end{align*}
By $\left\langle  \left(\xi-\eta\right),   AJ(\xi-\eta)\right\rangle = \left\langle  AJ\left(\xi-\eta\right),   (\xi-\eta)\right\rangle $, 
\begin{align*}
  \left\langle A^T\left(\xi-\eta\right),    J(\xi-\eta)  \right\rangle& =  \left\langle  \frac{ JA+(JA)^T}{2} \left(\xi-\eta\right),    (\xi-\eta)\right\rangle. 
  \end{align*}
Let $\Gamma_m$ be the intersection  $\Gamma_{\rm{main}}^{\rm{bad}}\cap\{(\xi,\eta):|\xi-\eta|\approx 2^{-m}\le 1\}$ defined  by
$$
 \Gamma_m:= \left\{(\xi,\eta):  \left\langle  \frac{ JA+(JA)^T}{2} \left(\xi-\eta\right),    (\xi-\eta)\right\rangle   \le 2^{-m} \lambda^{2\epsilon}\lambda^{-1}\ \text{and}\ |\xi-\eta|\approx 2^{-m}\right\}. $$
  Then we can observe that $\Gamma^{\rm{bad}}_{\rm{main}} \subset\bigcup_{m=0}^\infty \Gamma_m$. For $\text{rank}(A)=2$,  we claim that
   \begin{align}\label{pc991}
 \left|\int\chi_{\Gamma_m}(\xi,\eta)d\xi\right|\le \begin{cases} C2^{-m}\lambda^{2\epsilon}\lambda^{-1}\ \text{if $\text{rank}(JA+(JA)^T)=2$},\\
C2^{-2m} \ \text{if $\text{rank}(JA+(JA)^T)=0$.}\end{cases}
\end{align}
\begin{proof}[Proof of (\ref{pc991})]
Since   a symmetric matrix is orthogonally diagonalizable, it holds that   $  (JA+(JA)^T)/2 = QDQ^T$ with an orthogonal matrix $Q$ and   two eigenvalues $\mu_1,\mu_2$ in the diagonal   of $D$. We use a change of variable $\xi-\eta\rightarrow \xi$ and next a rotation via $Q$ to obtain 
\begin{align*}
  \left|\int\chi_{\Gamma_m}(\xi,\eta)d\xi\right|&\le|\{\xi=(\xi_1,\xi_2):   | \mu_1\xi_1^2+\mu_2\xi_2^2|   \le 2^{-m} \lambda^{2\epsilon}\lambda^{-1}\ \text{and}\ |\xi|\approx 2^{-m}\} | 
  \\
  &=\begin{cases}O(2^{-m}\lambda^{2\epsilon}\lambda^{-1}) \ \text{ if  $\mu_1,\mu_2\ne 0$ namely $\text{rank}(JA+(JA)^T)=2$}\\
 O(2^{-2m})\ \text{ if   $\mu_1=\mu_2=0$ namely  $\text{rank}(JA+(JA)^T)=0$.}
 \end{cases}
  \end{align*}  
This follows from the elementary sub-level set estimates. So we proved (\ref{pc991}).
\end{proof}
In (\ref{pc981}), by another  sub-level set estimate with respect to $dx$ again,  we have 
\begin{align*} 
|K^{\rm{bad}}_{\rm{main}}(\xi,\eta) |\lesssim \lambda^{2}\int_{\mathbb{R}^2}  \psi\left(\frac{ |\xi+x| -|\eta+x|
  }{\lambda^{\epsilon}\lambda^{-1} }\right)dx\lesssim \lambda^{2}\frac{\lambda^{\epsilon}\lambda^{-1} }{|\xi-\eta| } 
  \end{align*} 
because
   $\left|\nabla_{x}\left( |\xi+x| -|\eta+x|\right)\right|  \approx |\xi-\eta|$ due to (\ref{6613}).    Thus, we obtain that
   \begin{align}\label{fp4}
\int_{\mathbb{R}^2} |K_{\rm{main}}^{\rm{bad}}(\xi,\eta)|d\xi&\lesssim\int_{\mathbb{R}^2} \lambda^{2}\frac{\lambda^{\epsilon}\lambda^{-1} }{|\xi-\eta| }  \sum_{\lambda^{-1j}\le 2^{-m}\le 1}\chi\left(\frac{|\xi-\eta|}{2^{-m}}\right)\chi_{\Gamma_m} (\xi,\eta)d\xi.   
 \end{align}
 If  $\text{rank}(JA+(JA)^T)=2$, then (\ref{pc991}) implies that $RHS$ of (\ref{fp4}) is bounded by
 $$ \sum_{\lambda^{-1}\le 2^{-m}\le 1}\lambda^{2}   \left(\frac{\lambda^{\epsilon}\lambda^{-1}}{2^{-m} } \right) 2^{-m}\lambda^{2\epsilon}\lambda^{-1}\le C\lambda^{4\epsilon}. $$
 If  $\text{rank}(JA+(JA)^T)=0$, then (\ref{pc991}) implies that $RHS$ of (\ref{fp4}) is bounded by
 $$ \sum_{\lambda^{-1}\le 2^{-m}\le 1}\lambda^{2}   \left(\frac{\lambda^{\epsilon}\lambda^{-1}}{2^{-m} } \right)   2^{-2m} \le C\lambda\lambda^{2\epsilon}. $$
 Hence we proved (\ref{fp1}).  \end{proof}
Therefore we finish the proof of (\ref{cb11}).

\subsection{Proof  of Theorem \ref{th4} for $\text{rank}\left(JA+(JA)^T\right)=1$ }\label{sec8.2}
Let $\phi(x,t,\xi,\eta)=\langle A(x), \xi \rangle+t| \xi+x|$ for $ A=\left(\begin{matrix}1&c\\ 0&1\end{matrix}  \right)$ as  (1-2) of Proposition \ref{lem10044}  and Lemma \ref{lem2.2} and recall that
\begin{align*} 
\mathcal{T}^{\lambda}_{\rm{annulus}}g(x,t )&=\lambda  \chi\left(t\right)\psi\left(x\right) \int_{\mathbb{R}^2}
e^{2\pi i\lambda\phi(x,t,\xi,\eta)}
\chi\left(  t|\xi+x| \right) \widehat{g
 }(  \xi)d\xi\ \text{}.\end{align*} 
We claim that under the conditions  $\text{rank}\left(JA+(JA)^T\right)=1$ and $\text{rank}(A)=2$,
\begin{align}
\|\mathcal{T}^{\lambda}_{\rm{annulus}}\|_{L^2(\mathbb{R}^2)\rightarrow L^2(\mathbb{R}^{2+1})}\lesssim \lambda^{1/6+\epsilon }.\label{pp89}
\end{align} 
For this purpose,  we shall apply the following well-known  results of the  two-sided fold singularities.
\begin{lemma}[\cite{GS2} Two--Sided Fold Singularity]\label{pr9}
Let  $\psi\in C_c(\mathbb{R}^d\times\mathbb{R}^d)$ and $\phi$ be a smooth real valued phase function defined in $\mathbb{R}^d\times\mathbb{R}^d$.  We consider a family of operators $\{\mathcal{S}^{\lambda}\}$ with $\lambda>0$:
$$\mathcal{S}^{\lambda}f(x)=\int e^{i\lambda \phi(x,y)} \psi(x,y)f(y)dy\ \text{for $f\in L^2(\mathbb{R}^d)$}.$$By $[\phi_{xy}''(x_0,y_0)]\in M_{d\times d}(\mathbb{R})$, we denote   the  mixed hessian matrix of $\phi$ at $(x_0,y_0)$. 
Assume that $C_{\phi}=\left\{(x,\phi'_x,y,-\phi'_y)\right\}$ is a two-sided folding canonical relation, that is, 
\begin{itemize}
\item for each point $(x_0,y_0)\in \text{supp}(\psi)$ satisfying $\det[\phi_{xy}''(x_0,y_0)]=0$, it holds that
\begin{align}\label{0643}
\text{rank}([\phi_{xy}''(x_0,y_0)])= d-1, 
\end{align}
\item for unit vectors  $U$ and $V$ in $\mathbb{R}^d$, it holds that
\begin{align}\label{pp88}
\begin{cases}
$$\text{If $[\phi_{xy}''(x_0,y_0)]V=0$,}\ \text{then}\ \left| \langle V,\nabla_y\rangle  \det([\phi_{xy}''(x,y)])\big|_{(x_0,y_0)}\right| >0,$$\\
 $$\text{If $U^T[\phi_{xy}''(x_0,y_0)]=0$,}\ \text{then}\  \left| \langle U,\nabla_x\rangle \det( [\phi_{xy}''(x,y)])\big|_{(x_0,y_0)}\right| >0.$$
 \end{cases}
 \end{align}
\end{itemize}
 Then there is an additional decay $\lambda^{-1/3}$ from the third derivative of the phase as
$$\|\mathcal{S}^{\lambda}\|_{L^2(\mathbb{R}^d)\rightarrow L^2(\mathbb{R}^d)}\lesssim \lambda^{-(d-1)/2}\lambda^{-1/3}.$$
\end{lemma}
We  utilize  Lemma \ref{pr9} to prove   Proposition \ref{lem81} below.
\begin{proposition} \label{lem81}
Let $\phi(x,t,y)=\langle A(x), y\rangle+t|x+y|$ where $A= \left(\begin{matrix}1&c\\ 0&1\end{matrix}  \right)$. Set
\begin{align*} 
\mathcal{S}^{\lambda}f(x,t)= \chi(t)\psi(x)\int e^{i\lambda  \phi(x,t,y) } \chi(|x+y|)f(y)dy.
\end{align*}
  Then  it holds that
\begin{align}
\|\mathcal{S}^{\lambda}\|_{L^2(\mathbb{R}^2)\rightarrow L^2(\mathbb{R}^2\times\mathbb{R})}\lesssim \lambda^{-1/2}\lambda^{-1/3}.\label{807}\end{align}
\end{proposition}
\begin{proof}[Proof of (\ref{807})]
  Split $\mathcal{S}^\lambda=\mathcal{S}^\lambda_1+\mathcal{S}^{\lambda}_2$ where
  \begin{align*}
  \mathcal{S}^{\lambda}_1f(x,t)&=\chi(t)\psi(x)\int e^{i\lambda\phi(x,t,y)} \psi\left(\frac{|x_1+y_1|}{\epsilon}\right)\psi\left(\frac{|x+y|+t}{\epsilon}\right)\chi(|x+y|)f(y)dy, \\
   \mathcal{S}^{\lambda}_2f(x,t)&=\chi(t)\psi(x)\int e^{i\lambda\phi(x,t,y)} \left(1-\psi\left(\frac{|x_1+y_1|}{\epsilon}\right)\psi\left(\frac{|x+y|+t}{\epsilon}\right)\right)\chi(|x+y|)f(y)dy.   
  \end{align*}
  In the support of the integral above, we use the letters  $u=(u_1,u_2)$
\begin{align*}
\text{$(u_1,u_2)=(x_1+y_1,x_2+y_2)$ where
  $|u|=|x+y|\approx 1$.}
  \end{align*} 
We first claim that
\begin{align}\label{100232}
\|\mathcal{S}^{\lambda}_2\|_{op}\lesssim \lambda^{-1}.
\end{align}
    \begin{proof}[Proof of (\ref{100232})]
We  apply H$\ddot{o}$rmander's theorem according to  the non-vanising mixed hessians of  $2\times 2$ submatrices of the $2\times 3$ full mixed derivative matrix of $ \phi(x,t,y)=\langle A(x), y\rangle+t|x+y|$ in (\ref{a72}).  Recall that for $A=\left(   \begin{matrix}a_{11} &a_{12} \\ a_{21}&a_{22}\end{matrix}  \right) =\left(\begin{matrix}1&c\\ 0&1\end{matrix}  \right)$ and $u=x+y$,
\begin{align*}
\det\left([\phi''_{(x_1t)(y_1y_2)}]\right)&=\frac{1}{|u|} \left(\det\left(   \begin{matrix}a_{11} &a_{12} \\ u_1&u_2\end{matrix}  \right) +\frac{tu_2}{|u|} \right)= \frac{1}{|u|}\left(u_2\left(1+ \frac{t}{|u|}\right)-cu_1 \right) \\
\det\left([\phi''_{(x_2t)(y_1y_2)}]\right)&=\frac{1}{|u|} \left(\det\left(   \begin{matrix}a_{21} &a_{22} \\ u_1&u_2\end{matrix}  \right) -\frac{tu_1}{|u|} \right) =\frac{-u_1}{|u|}\left(1+ \frac{t}{|u|}\right). 
\end{align*}
Then in the support of
$$ \left(1-\psi\left(\frac{|x_1+y_1|}{\epsilon}\right)\psi\left(\frac{|x+y|+t}{\epsilon}\right)\right)\chi(|x+y|)= \left(1-\psi\left(\frac{|u_1|}{\epsilon}\right)\psi\left(\frac{|u|+t}{\epsilon}\right)\right)\chi(|u|) ,$$ we choose the non-vanishing   mixed hessian according to the following three cases:
 \begin{itemize}
 \item If $|u_1|\le \frac{\epsilon}{10(1+|c|) }$, then $||u|+t|\ge \epsilon/2$ with $1/2\le |u|\le 2$ and $|u_2|\approx |u|$. This implies $$\left|\det\left([\phi''_{(x_1t)(y_1y_2)}]\right)  \right|\approx\left| \left(1+ \frac{t}{|u|}\right) \right|\gtrsim \epsilon. $$
  \item Let $|u_1|\ge \frac{\epsilon}{10(1+|c|) }$ and $||u|+t|\le  \frac{|c|\epsilon}{100(1+|c|) }  $.   This with  $|u|\approx 1$ implies
$$\left|\det\left([\phi''_{(x_1t)(y_1y_2)}]\right)  \right|\approx \frac{c|u_1|}{|u|}\gtrsim \epsilon.$$
\item Let  $|u_1|\ge \frac{\epsilon}{10(1+|c|) }$ and $||u|+t|\ge  \frac{|c|\epsilon}{100(1+|c|) }  $. Then $$\left|\det\left([\phi''_{(x_2t)(y_1y_2)}]\right)  \right|\gtrsim \epsilon^2.
$$
\end{itemize} 
Notice that the derivatives of cutoff functions separating the above three regions do not blow up.  We regard  $\mathcal{S}^{\lambda}f $ as a function of $(x_1,t)$ or $(x_2,t)$ respectively, whose phase function satisfies the non-degeneracy assumption of the  $2\times 2$ mixed hessian matrix. Then the desired estimate (\ref{100232}) follows from  the H$\ddot{o}$rmander  theorem.  
\end{proof}
We next   claim that
\begin{align}\label{100233}
\|\mathcal{S}^{\lambda}_1\|_{op}\lesssim \lambda^{-1/2}\lambda^{-1/3}.
\end{align}
\begin{proof}[Proof of (\ref{100233})]
Here, we freeze $t$ with $|t|\approx 1$ to treat $\mathcal{S}^\lambda_1f(x_1,x_2,t)$ and apply the two-sided fold singularity in Proposition \ref{pr9}.  From the support condition 
\begin{align}\label{100235}
  \psi\left(\frac{|x_1+y_1|}{\epsilon}\right)\psi\left(\frac{|x+y|+t}{\epsilon}\right),
  \end{align}
it holds that for    $u=(u_1,u_2)=(x_1+y_1,x_2+y_2)$,
 \begin{align}\label{p44}
\text{$ |u_1| =O(\epsilon)$,\ \text{that is} \  $  |u_2|^2=|u|^2+O(\epsilon^2)$,
and $1\approx |u|= -t+O(\epsilon)$}.
\end{align} 
At the points $(x_1,x_2),(y_1,y_2)$ satisfying (\ref{p44}) and $\det\left(\phi_{(x_1x_2)(y_1y_2)}''\right)=0$, we shall check the fold singularity of  $(x,\phi_x, y,-\phi_y)$.  
From $A= \left(\begin{matrix}a_{11}&a_{12}\\
a_{21}&a_{22}
\end{matrix}\right)= \left(\begin{matrix}1&c\\ 0&1\end{matrix}  \right)$ and  (\ref{p44}), we compute the mixed hessian matrix of  $ \phi(x,t,y)=\langle A(x), y\rangle+t|x+y|$ in  (\ref{a72}) at $(x,y)$ for a fixed $t$ as
\begin{align}\label{08b}
[\phi_{(x_1x_2)(y_1y_2)}''](x,y)= \left(\begin{matrix}1+t\frac{u_2^2}{|u|^3}&c- t\frac{u_1u_2}{|u|^3} \\ 0-
t\frac{u_1u_2}{|u|^3} &1+t\frac{u_1^2}{|u|^3} \end{matrix}  \right)= \left(\begin{matrix}0&c 
\\ 0 &1 \end{matrix}  \right) +O(\epsilon).
\end{align}
Here $O(\epsilon)$ means that every entry of the $2\times 2$ matrix is $O(\epsilon)  $ and
$
\det\left([\phi_{(x_1x_2)(y_1y_2)}''](x,y)\right)  =O(\epsilon).
$
Let $(x_0,y_0)$ be the point in the support of the kernel in  (\ref{100235}) for (\ref{08b}) to satisfy $$\det\left([\phi_{(x_1x_2)(y_1y_2)}''](x_0,y_0)\right)=0.$$ Then  from (\ref{08b}) it follows that
$\text{rank}([\phi_{xy}''(x_0,y_0)])=1$ satisfying (\ref{0643}).
The  kernel vector fields $V$ and $U$ in (\ref{pp88})  for the matrix $[\phi_{(x_1x_2)(y_1y_2)}''(x_0,y_0)]$ of (\ref{08b}) satisfying
$$  [\phi_{(x_1x_2)(y_1y_2)}'' (x_0,y_0)]V={\bf 0}\ \text{and}\ U^T [\phi''_{(x_1x_2)(y_1y_2)} (x_0,y_0)]={\bf 0}\ \text{with  $|U|=|V|=1$}$$
are parallel to $ (1,0)+O(\epsilon)$ and $ (-1,c)+O(\epsilon)$ respectively.   
On the other hand, we can evaluate the determinant of the hessian matrix  (\ref{08b})  at $(x,y)$ for a fixed $t$  as
\begin{align*} 
\det \left([\phi_{(x_1x_2)(y_1y_2)}''(x,y)]\right)&= 1+t\left(\frac{ u_1^2+ u_2^2+cu_1u_2}{|u|^3}\right)  
\end{align*}  
 for $u=x+y$ as in  (\ref{a72}).
Then we can see that
 $$\nabla_{x}\det\left([\phi_{(x_1x_2)(y_1y_2)}''(x,y)]\right)=\nabla_y\det\left([\phi_{(x_1x_2)(y_1y_2)}''(x,y)]\right)$$ whose components are  
\begin{align*}
\frac{\partial}{\partial y_1}&\det\left([\phi_{(x_1x_2)(y_1y_2)}''(x,y)]\right) =\frac{\partial}{\partial x_1}\det\left([\phi_{(x_1x_2)(y_1y_2)}''(x,y)]\right)\\
&\qquad\qquad= \frac{t}{|u|^5}\left( |u|^2( 2u_1+cu_2)-3 u_1( u_1^2+cu_1u_2+ u_2^2)\right)=ct  u_2/|u|^3+O(\epsilon)
\end{align*}
\begin{align*}
\frac{\partial}{\partial y_2}&\det\left([\phi_{(x_1x_2)(y_1y_2)}''(x,y)]\right) =\frac{\partial}{\partial x_2}\det\left([\phi_{(x_1x_2)(y_1y_2)}''(x,y)]\right)\\ 
&\qquad\qquad= \frac{t}{|u|^5} \left(|u|^2( 2u_2+cu_1)-3u_2( u_1^2+cu_1u_2+ u_2^2)\right) =-t u_2 /|u|^3+O(\epsilon).
\end{align*}
 Thus we utilize the two kernel vector field $V= (1,0)+O(\epsilon)$ and $U= \frac{(-1,c)}{\sqrt{1+c^2}}+O(\epsilon)  $ with the support condition (\ref{p44}) to compute the directional derivative in   (\ref{pp88}), 
\begin{align*}
 \langle V,\nabla_y\rangle  \det([\phi_{xy}''(x ,y )])\bigg|_{(x_0,y_0)}&=\frac{tu_2}{|u|^3}\left(  c,  -1\right)\cdot  (1,0)+O(\epsilon)=\frac{ ct u_2}{|u|^3}+O(\epsilon) \ne 0, \\
 \langle U,\nabla_x\rangle \det( \phi_{xy}''(x_0,y_0))\bigg|_{(x_0,y_0)}&=\frac{tu_2}{|u|^3}\left( c,  -1\right)\cdot  \frac{(-1,c)}{\sqrt{1+c^2}}+O(\epsilon)=\frac{-2ctu_2 }{\sqrt{1+c^2 }|u|^3}+O(\epsilon)  \ne 0.
\end{align*}
where $u=x_0+y_0$,
satisfying the condition (\ref{pp88}) for a fixed $t$  with $|t|\approx 1$.
Hence
$(x,\phi'_x, y,-\phi'_y)$ on the support of (\ref{100235}) has two sided fold singularity. Therefore, we freeze $t$ and apply Lemma \ref{pr9} for $d=2$ to obtain (\ref{100233}).
\end{proof}
We have proved    (\ref{100232}) and (\ref{100233}),  to  finish the proof of (\ref{807}).
\end{proof}
The proposition \ref{lem81}  implies that $\|\mathcal{T}^\lambda_{\rm{annulus}} \|_{op}=\|\lambda \mathcal{S}^{\lambda}\|_{op}\lesssim\lambda^{1/6}$   to   show (\ref{pp89}),  which is  the   case $\text{rank}(A)=2$ and $\text{rank}((JA)+(JA)^T)=1$.  Therefore we  finished the proof of Theorem \ref{th4}.

 \section{Proof of Theorem \ref{th441} ($\text{rank}(A)=1$)}\label{sec90}
 The goal of this section is to prove Theorem \ref{th441}.  
 Under the assumption of $\text{rank}(A)=1$,  we  shall  show
\begin{align}\label{nv2}
\| \mathcal{T}_{j}^{\lambda}\|_{L^2(\mathbb{R}^2)\rightarrow L^2(\mathbb{R}^{2+1})} \lesssim 2^{\epsilon j}\ \text{uniformly in $\lambda$}.
\end{align}
First of all, we show the following lemma.
\begin{lemma}\label{lem81} Suppose that 
  $\text{rank}(A)=1$.
 Then there exists an orthogonal matrix $Q$ such that
  $$Q^TAQ=   \left(\begin{matrix} \alpha &0\\  \beta&0\end{matrix}\right)\ \text{or}\  \left(\begin{matrix}0 &\alpha\\  0&\beta\end{matrix}\right).$$
  \end{lemma}
 \begin{proof}
First consider the case that $A$ is diagonalizable. Let  $Av_1=\lambda_1 v_1 $ and $Av_2=\lambda_2 v_2$ where $v_1$ and $v_2$ are two normalized eigenvectors. We may assume that $\lambda_1=0$ because $\det(A)=0$. Thus the other eigenvalue $\lambda_2$ is   a purely real number so that both $v_1,v_2\in \mathbb{R}^2$. Suppose that $v_1,v_2$ are linearly independent. Then by Gram-Schmidt process, there exists an orthonormal vectors $q_1=v_1$ and $q_2=\frac{v_2-\langle v_2,v_1\rangle v_1}{c}$  with $c=|v_2-\langle v_2,v_1\rangle v_1|$.  Put
$Q= \left(\begin{matrix} q_1|q_2\end{matrix}\right)$. Then $$AQ=\left(\begin{matrix}Aq_1|Aq_2\end{matrix}\right)=\left(\begin{matrix} {\bf 0}| \, \frac{1}{c} \lambda_2v_2\end{matrix}\right).$$
Thus
$$Q^TAQ=\left(\begin{matrix} q_1^T\\q_2^T\end{matrix}\right) \left(\begin{matrix} {\bf 0}| \, \frac{1}{c} \lambda_2v_2\end{matrix}\right)= \left(\begin{matrix}0&    \langle q_1,\frac{1}{c}\lambda_2v_2\rangle\\
0& \langle q_2^T, \frac{1}{c} \lambda_2v_2\rangle\end{matrix}\right)=   \left(\begin{matrix}0&   \frac{1}{c} \langle v_1,\lambda_2v_2\rangle\\
0& \frac{\lambda_2}{c}(|v_2|^2-|\langle v_2,v_1\rangle|^2)\end{matrix}\right)$$
Next consider the case  that $A$ is not diagonalizable. Then the only eigenvalue of $A$ is $\lambda=0$ with its eigenvectors $v_1,v_2$   dependent.  Then (1-2) of Proposition \ref{lem10044} for the case $\lambda=0$ implies that $ A=Q^T \left(\begin{matrix}0&c\\ 0&0\end{matrix}\right)    Q$. This completes the proof of Lemma \ref{lem81} 
 \end{proof}
\subsection{Basic Reduction and Decompositions}
In view of (ii) of Lemma \ref{lem2.2} and Lemma \ref{lem81}, we let $A=\left(\begin{matrix} \alpha &0\\  \beta&0\end{matrix}\right)$ and show  (\ref{nv2}). In view of Remark \ref{rek1} and Proposition \ref{prop111}, it suffices to treat the only case
\begin{align}\label{kyo}
\frac{2^j}{\lambda}\lesssim 1.
\end{align}
Consider the oscilatory integral operator $\mathcal{T}_j^\lambda$ in   (\ref{5rpp}). Let
 $G(x_1,\xi)= | \xi+A(x)|$ with $A(x)=(\alpha x_1,\beta x_1)$ and $|(\alpha,\beta)|\ne 0$.
From this  we write the integral   in (\ref{5rg}) as
\begin{align} 
\mathcal{T}_j^\lambda f(x_1,x_2,t)&=\lambda \int e^{2\pi i\lambda \left(\langle  x,  \xi\rangle +tG(x_1,\xi_1,\xi_2)\right)}\chi(t)\chi\left(\frac{tG(x_1,\xi_1,\xi_2)}{\frac{2^j}{\lambda}}\right) \widehat{f}(\xi_1,\xi_2)d\xi_1d\xi_2\nonumber\\
&=\sqrt{\lambda}\int e^{2\pi i \lambda x_2 \xi_2} \left[   \mathcal{S}^{\xi_2}(\widehat{f}(\cdot,\xi_2)) \right](x_1,t) d\xi_2\label{00hh}
 \end{align}
Here
$$ \mathcal{S}^{\xi_2}(h)(x_1,t):=\sqrt{\lambda}\int e^{2\pi i \lambda [x_1 \xi_1+tG(x_1,\xi_1,\xi_2)]} \chi(t)  \chi\left(\frac{tG(x_1,\xi_1,\xi_2)}{\frac{2^j}{\lambda}}\right) h(\xi_1)d\xi_1 $$
It suffices to prove that  there exists $C$  such that for all $h\in L^2(\mathbb{R}^2)$,
\begin{align}\label{ssee}
\|\mathcal{S}^{\xi_2}(h)\|_{L^2(\mathbb{R}^2)}\le\,  C2^{\epsilon j}  \|h\|_{L^2(\mathbb{R})},
\end{align}
since this  with the Plancherel theorem with respect to $x_2$ variable in (\ref{00hh}) implies (\ref{nv2}).
\subsection{Proof of (\ref{ssee})}
For this case the mixed hessian with respect to $x_1\xi_1$ and $t\xi_1$ vanishes simultaneously, we cannot apply directly the H\"{o}rmander  theorem.  
Let us fix $\xi_2=c$ and denote $G(x_1,\xi_1,\xi_2)$ by $G(x_1,\xi_1)=|(\xi_1+\alpha x_1,c+\beta x_1)|$,  we need to control the sizes of derivatives of $G$,
 \begin{align}\label{pnm1}
  \frac{\partial}{\partial \xi_1}G(x_1,\xi_1)  & =
  \frac{ H(x_1,\xi_1)   }{G(x_1,\xi_1)  }   \end{align} 
  where
  $$H(x_1,\xi_1)=\xi_1+\alpha x_1\ \text{and}\ R(x_1,\xi_1) =\alpha ( \xi_1+ \alpha x_1)+ \beta( c+\beta x_1 ).$$
 These two derivatives in (\ref{pnm1}) are bounded by $O(1)$.  
 According to the size of $\frac{\partial}{\partial \xi_1}G(x_1,\xi)=H/G$, we decompose $ \mathcal{S}^{\xi_2} =\sum_{m=0}^{j} \mathcal{S}_{m} $ where  for $2^{-j}< 2^{-m}\le 1$
 \begin{align*}
  \mathcal{S}_{m}(h)(x_1,t) &:=\sqrt{\lambda} \chi(t)\int e^{2\pi i \lambda \left(x_1\xi_1 +tG(x_1,\xi_1)\right)} \chi\left(  \frac{ H(x_1,\xi_1) }{2^{-m} \frac{\lambda}{2^j}} \right)
\chi\left(\frac{tG(x_1,\xi_1) }{\frac{2^j}{\lambda}}\right) h(\xi_1)d\xi_1, 
\\
 \mathcal{S}_{j}(h)(x_1,t) &:=\sqrt{\lambda} \chi(t)\int e^{2\pi i \lambda \left(x_1\xi_1 +tG(x_1,\xi_1)\right)} \psi\left(  \frac{   H(x_1,\xi_1)  }{2^{-j} \frac{\lambda}{2^j}} \right)
\chi\left(\frac{tG(x_1,\xi_1) }{\frac{2^j}{\lambda}}\right) h(\xi_1)d\xi_1, 
\end{align*}
 Here we can take   $H(x_1,\xi_1)>0$ without loss of generality.
We first estimate $ \mathcal{S}_{j}$ above.  In  the   kernel of $\mathcal{S}_j$,    it holds that $|H| \lesssim 2^{-j} 2^j/\lambda$. Thus
  $d\xi_1=O( 1)$. Moreover,
  $dx_1 =O(\frac{2^j}{\lambda})=O(1)$ from $|(\alpha,\beta)|\ne 0$ and (\ref{kyo}). Thus we have $ \|   \mathcal{S}_{j} \|_{op}\lesssim 1$.  It suffices to prove that  for a fixed $m$ in the above,
\begin{align}\label{vbb2}
 \|   \mathcal{S}_m  \|_{op}\lesssim 1.
\end{align}
 The integral kernel $ K(\xi_1,\eta_1)$ of $[\mathcal{S}_{m}]^* \mathcal{S}_{m}$ is
  \begin{align}
 &K(\xi_1,\eta_1)=\lambda  \int e^{2\pi i \lambda (\xi_1-\eta_1)x_1}  e^{2\pi i \lambda\left( G(x_1,\xi)-G(x_1,\eta)\right) t } \Psi(x_1,t,\xi,\eta)
 dx_1dt\label{7j4}
 \end{align}
 where $\Psi(x_1,t,\xi_1,\eta_1)$ is
 \begin{align*}
 \chi(t) \psi(x_1)  \chi\left(  \frac{ H(x_1,\xi_1) }{2^{-m} \frac{\lambda}{2^j}} \right)
\chi\left(\frac{tG(x_1,\xi_1) }{\frac{2^j}{\lambda}}\right)  \chi\left(  \frac{ H(x_1,\eta_1) }{2^{-m} \frac{\lambda}{2^j}} \right)\chi\left(\frac{tG(x_1,\eta_1) }{\frac{2^j}{\lambda}}\right) . 
\end{align*}
In view of (\ref{pnm1})  from the support of (\ref{7j4}), it holds that  
\begin{align}
H=  \xi_1+\alpha x_1 (\text{or}\ \approx \eta_1+\alpha x_1)\approx  2^{-m} \left(\frac{2^j}{\lambda}\right),\  G= |(\xi_1+\alpha x_1,c+\beta x_1)|\approx \frac{2^j}{\lambda}.  \label{kp1}
\end{align}
The derivative of the phase function in (\ref{7j4}) with respect to $t$ is
\begin{align} \label{998} 
 G(x_1,\xi)-G(x_1,\eta_1)&= \frac{G(x_1,\xi)^2-G(x_1,\eta_1)^2}{G(x_1,\xi)+G(x_1,\eta_1)}\nonumber \\
& = \frac{\bigg(H(x_1,\xi)+H(x_1,\eta)\bigg)(\xi_1-\eta_1) }{G(x_1,\xi)+G(x_1,\eta_1)  }  
 \approx 2^{-m}(\xi_1-\eta_1). 
\end{align} 
{\bf Case 1}. Let  $2^{j}/\lambda<C(2^{-m}+ |\alpha|)   $ for  $C\gg 1$. Then 
\begin{itemize}
\item if $\alpha\ne 0$, then $dx_1\lesssim 2^{-m}  \frac{2^j}{\lambda} =O(2^{-m})$ from the first term (\ref{kp1}).
\item if $\alpha=0$, then $dx_1\lesssim \frac{2^j}{\lambda} =O(2^{-m})$  from the second term (\ref{kp1}) and  $2^{j}/\lambda<C 2^{-m} $.
\end{itemize}
With this and (\ref{998}), we apply the integration by parts with respect to $t$ 
to obtain that
 \begin{align}\label{pm33}
& \int   \left| K(\xi_1,\eta_1)  \right|d\xi_1  
 \lesssim \int \frac{ \lambda  2^{-m}    }{(\lambda 2^{-m}|\xi_1-\eta_1|+1)^2}   d\xi_1\lesssim  1.
\end{align} 
{\bf Case 2}. Let  $2^{j}/\lambda\ge C(2^{-m}+ |\alpha|) $ for a large constant $C>0$.  This case occurs only when $\alpha=0$ because   $2^j/\lambda\lesssim 1$ in (\ref{kyo}).  For this case $G(x_1,\xi_1)=|(\xi_1,c+\beta x_1)|$ and $H(x_1,\xi_1)=\xi_1.$
From  the above support  condition  in   (\ref{pnm1}),(\ref{7j4}) and (\ref{998})  with $|G(x_1,\xi)|\approx |G(x_1,\eta_1)|\approx 2^j/\lambda $, it holds that  
\begin{align*} 
 \left| \frac{\partial}{\partial x_1} [G(x_1,\xi_1)-G(x_1,\eta_1)]\right|&=\beta (c+\beta x_1)  \left|\frac{ 1  }{G(x_1,\xi_1)  } -  \frac{ 1  }{G(x_1,\eta_1)  }\right| \nonumber \\
 &\le \frac{2^{-m}|\xi_1-\eta_1| }{ 2^j/\lambda}  \le \frac{|\xi_1-\eta_1|}{C}.  
 \end{align*} 
From  this, we see that  $x_1$-derivative of the phase function in (\ref{7j4}) has the lower bound $$\left|\frac{\partial}{\partial x_1} \left( (\xi_1-\eta_1)x_1+ t(G(x_1,\xi)-G(x_1,\eta_1)) \right)\right| \gtrsim |\xi_1-\eta_1|.$$ 
Moreover, the higher $x_1$-derivatives of the phase function for $N\ge 2$ satisfy  $$\partial_{x_1}^N\left( (\xi_1-\eta_1)x_1+ t(G(x_1,\xi)-G(x_1,\eta_1) )\right)=O \left(\left(\frac{1}{2^j/\lambda}\right)^{N-1}  |\xi_1-\eta_1|\right).   $$
Futhermore, the  $x_1$ derivative of the cutoff functions has the upper bound
$$  \left|\partial^N_{x_1} \chi\left(  \frac{    G(x_1,\xi)  }{  2^j/\lambda } \right)\right|  \lesssim 
\left(\frac{1}{  \frac{2^j}{\lambda} } \right)^N.   $$
Thus, we are able to apply the integration by parts with respect to $dx_1$,
 \begin{align*}
& \int | K(\xi_1,\eta_1) |d\xi_1   \lesssim 
 \int \frac{  \lambda  }{(2^j  |\xi_1-\eta_1|+1)^N}  \times [dx_1\ \text{measure}]\,d\xi_1\lesssim 1
\end{align*}
where $dx_1=O \left(  \frac{ 2^j}{\lambda} \right) $   from the second part of (\ref{kp1}).
This yields (\ref{vbb2}) to finish the proof of  Theorem \ref{th441}.
 \section{Local Smoothing in $L^2$; Proof of Main Theorem \ref{main4}}\label{Sec91}
 In this section, we prove  Main Theorem \ref{main4}. For this 
we verify
 \begin{align} \label{488}
&\|\mathcal{T}_{\rm{annulus}}^\lambda\|_{L^2(\mathbb{R}^{d+1})\rightarrow L^2(\mathbb{R}^{d+1}\times \mathbb{R})}\lesssim_{\epsilon} C\lambda^{c(A)/2}\ \text{implies}\ \nonumber\\
&\qquad\qquad\left\| \mathcal{A}_{S^{d-1}A)} \right\|_{L^2_{\alpha}(\mathbb{R}^{d+1}) \rightarrow L^2(\mathbb{R}^{d+1}) \times [1,2])}\lesssim 1\ 
\text{ for   $\alpha>-(d-1)/2+c(A)/2$} 
\end{align}
and  find the matrix  $A=\left(\begin{matrix}0&1\\ 1&0\end{matrix}\right)$ such  that
 \begin{align}\label{j994}
 \text{$f\rightarrow \mathcal{A}_{S^{1}(A)}f(\cdot,1)$ is unbounded  from  $L^2_{\alpha}(\mathbb{R}^3)$ to $L^2(\mathbb{R}^3)$ for $ -1/4>\alpha>-1/2$.}
 \end{align}
where    $ -1/2 =-\frac{d-1}{2}+\frac{c(A)}{2}$  with $d=2$ and $\frac{c(A)}{2}=0$ since $\text{rank}(JA+(JA)^T)=2$.

\begin{proof}[Proof of (\ref{488})]
It suffices to work with $\alpha\le 0$. Since $t$ is localized as $1\le |t|\le 2$ for $ \mathcal{A}_{S^{d-1}(A)}f(x,x_{d+1},t)$, in (\ref{47kg}), we can localize $x$   as $|x|\le 2$.   
To compare    $|\xi|$ and $|\xi_{3}|$ in the symbol expression of the average in Section \ref{Sec42}, we take  a large $C\ge 1$ and set
  \begin{align}\label{si9}
\psi_1(\xi,\xi_3):= \psi\left(\frac{|\xi|}{C|\xi_3|}\right)\ \text{and}\ 
\psi_2(\xi,\xi_3):= 1-\psi\left(\frac{|\xi|}{C|\xi_3|}\right).
  \end{align}
Fix $\alpha$   above and put 
$$ \widehat{g}(\xi,\xi_{d+1})=(|\xi|+|\xi_{d+1}|+1)^{ \alpha} \widehat{f
 }(\xi,\xi_{{d+1}}).   $$
In view of (\ref{3hf}),  we set the Fourier integral operators $\mathcal{T}_{m^{\alpha}_{s,j} }$  for $s=1,2$  as
\begin{align*}
\mathcal{T}_{m^{\alpha}_{s,j} }g(x,x_{d+1},t) 
&= \psi(x)\chi(t)\int
e^{2\pi i\left( (x,x_{d+1})\cdot(\xi,\xi_{d+1})+t|\xi+\xi_{d+1}A(x)|\right)} \chi\left(\frac{t|\xi+\xi_{d+1}A(x)|}{2^j}\right)\\
&\times \psi_s(\xi,\xi_{d+1}) (|\xi|+|\xi_{d+1}|+1)^{ -\alpha}   \widehat{g}(\xi,\xi_{d+1}) d\xi d\xi_{{d+1}}.
     \end{align*}
     Here the symbol $m^{\alpha}_{s,j}$ is
     \begin{align*}
    & m^{\alpha}_{s,j}(x,x_{d+1},t,\xi,\xi_{d+1})\\
     &=\psi(x)\chi(t) 
e^{2\pi i t|\xi+\xi_{d+1}A(x)| } \chi\left(\frac{t|\xi+\xi_{d+1}A(x)|}{2^j}\right)  \psi_s(\xi,\xi_{d+1}) (|\xi|+|\xi_{d+1}|+1)^{ \alpha}  
     \end{align*}
     In view of the previous reduction to the symbol  in Section \ref{Sec42}, we  control the $L^2$ norm of   $  |\mathcal{A}_{S^{d-1}(A)}f(x,x_{d+1},t) |$ in (\ref{47kg}) by that of the following summation 
    \begin{align}\label{n32}
& \sum_{j=1}^\infty 2^{-(d-1)j/2} ( |\mathcal{T}_{m^{\alpha}_{1,j} }g(x,x_{d+1},t)| + |\mathcal{T}_{m^{\alpha}_{2,j}}g(x,x_{d+1},t)|).
\end{align}
From this    
      combined with $ \left\|
f \right\|_{L^2_{ \alpha}(\mathbb{R}^{d+1}) } = \left\|
g\right\|_{L^2(\mathbb{R}^{d+1}) },$
      it suffices to prove that     for  $s=1,2$,
      \begin{align*}
  \left\| 2^{-(d-1)j/2}\mathcal{T}_{m^{\alpha}_{s,j} }g\right\|_{L^2(\mathbb{R}^{d+1}) \times [1,2])}\lesssim 2^{-\epsilon j}\left\|
g\right\|_{L^2(\mathbb{R}^{d+1}) } 
     \end{align*}
    for   $\alpha>-(d-1)/2+c(A)/2$.   By the similar reduction to a family of the oscillatory integral operators as in (\ref{011}),
      $$\left\| 2^{-(d-1)j/2}\mathcal{T}_{m^{\alpha}_{s,j} } \right\|_{L^2(\mathbb{R}^{d+1})\rightarrow L^2(\mathbb{R}^{d+1}) \times \mathbb{R})}\lesssim \sup_{\lambda} \left\|   2^{-(d-1)j/2} \mathcal{T}^\lambda_{s,j}\right\|_{L^2(\mathbb{R}^{d})\rightarrow L^2(\mathbb{R}^{d+1})}$$
 where for each $s=1,2,$
    \begin{align*}
 \mathcal{T}_{s,j}^{\lambda}g (x,t )&= \lambda^{d/2}  \chi\left(t\right)\psi(x)  \int_{\mathbb{R}^d}
e^{2\pi i\lambda \left(\langle x,  \xi \rangle+t| \xi+A(x)|\right)}
\chi\left(\frac{\lambda t|\xi+A(x)|}{2^j}\right)\\
&\times \psi_s(\lambda \xi,\lambda) (\lambda(1+|\xi|)+1)^{ -\alpha}   \widehat{g}(  \xi)d\xi.\end{align*} 
 Let $s=1$. From the support condition
  $ \psi_1(\lambda \xi,\lambda)=\psi(|\xi|/C)$ in (\ref{si9}) with $|x|\le 2$ and $-\alpha\ge 0$,
\begin{align}\label{0449}
2^j\lesssim \lambda |\xi+A(x)|\lesssim \lambda\ \text{and}\ 
  (\lambda(1+|\xi|)+1)^{ -\alpha} \lesssim   \lambda^{ -\alpha} .
 \end{align}
  If $2^j\ll \lambda$, then by using this and (\ref{545}),
\begin{align}\label{hj1}
 \left\|   2^{-(d-1)j/2} \mathcal{T}^\lambda_{s,j}\right\|_{L^2(\mathbb{R}^{d})\rightarrow L^2(\mathbb{R}^{d+1})}&\lesssim 2^{-(d-1)j/2} \lambda^{ -\alpha} \left(\frac{2^j}{\lambda}\right)^{d/2}\nonumber
 \\
 &\ll  \lambda^{-(d-1)/2-\alpha}  \ll 2^{-((d-1)/2+\alpha) j}
\end{align}
If $2^j\approx\lambda$, then by our hypothesis with (\ref{0449}), we obtain that
\begin{align}\label{hj2}
\nonumber \left\|   2^{-(d-1)j/2} \mathcal{T}^\lambda_{s,j}\right\|_{L^2(\mathbb{R}^{d})\rightarrow L^2(\mathbb{R}^{d+1})}&\lesssim 2^{-(d-1)j/2} \lambda^{-\alpha} \lambda^{c(A)/2+\epsilon}\\
 &\approx  2^{j(-(d-1)/2 -\alpha+c(A)/2+\epsilon)}\le 2^{-j((d-1)/2+\alpha-c(A)/2-\epsilon)}.
\end{align}
Let $s=2$. Then 
  $ \psi_2(\lambda \xi,\lambda)=1-\psi(|\xi|/C)$, which implies $|\xi|\ge C/2\gg 1$ for large $C$. Thus
  $2^j/\lambda \approx |\xi+A(x)|\approx |\xi|\gg 1$ because $|x|\le 2$. Hence,
  $ (\lambda(1+|\xi|)+1)^{ -\alpha} \lesssim | \lambda \xi|^{ -\alpha}\approx 2^{-j\alpha}  $. 
  By this   with the bound $O(1)$ in (\ref{546})
  \begin{align}
\nonumber \left\|   2^{-(d-1)j/2} \mathcal{T}^\lambda_{s,j}\right\|_{L^2(\mathbb{R}^{d})\rightarrow L^2(\mathbb{R}^{d+1})}&\lesssim 2^{-(d-1)j/2}2^{-j\alpha} =2^{-((d-1)/2+\alpha) j}.
\end{align}
 This with (\ref{hj1}) and (\ref{hj2}) for $\alpha>-\frac{d-1}{2}+\frac{c(A)}{2}$ enables us to sum  (\ref{n32}) to obtain (\ref{488}).    \end{proof}

\begin{proof}[Proof of (\ref{j994})]
From the observation that for  $\tilde{f}(x,x_3)=f(x,x_3-x_1x_2)$ and 
$$- \langle A(x),y\rangle -(x_1-y_1)(x_2-y_2)=-x_1x_2 -y_1y_2\ \text{where $A=\left(\begin{matrix}0&1\\ 1&0\end{matrix}\right)$,}  $$
our average can be expressed as
\begin{align*}
\mathcal{A}_{S^1(A)}f(x,x_3+x_1x_2,1)&=\int_{S^1} \tilde{f}(x-y,x_3-y_1y_2)d\sigma(y)\\
&=\int_0^{2\pi} \tilde{f}(x-(\cos\theta,\sin\theta),x_3-\cos \theta\sin\theta)d\theta.
\end{align*}
Thus, it suffices to put
$$\mathcal{A}_{S^1(A)}f(x,x_3,1) 
 =\int_0^{2\pi}f(x-(\cos\theta,\sin\theta),x_3-\cos \theta\sin\theta)d\theta  $$
which is the convolution operator on the Euclidean space having its multiplier
$$ \int_0^{2\pi} e^{2\pi i (\xi,\xi_3)\cdot (\cos\theta,\sin\theta,\cos\theta\sin  \theta)} d\theta.  $$ Note $c(A)/2=0$ since $\text{rank}(JA+(JA)^T)=2$.  We deal with the operator $T:f\rightarrow T(f)$ defined by \begin{align*}
 & \widehat{T f}(\xi,\xi_3)=m(\xi,\xi_3)\widehat{f}(\xi,\xi_3) \ \text{with}
\\
&\qquad \qquad \qquad m(\xi,\xi_3)= \int_0^{2\pi} e^{2\pi i (\xi,\xi_3)\cdot (\cos\theta,\sin\theta,\cos\theta\sin  \theta)} d\theta( |(\xi,\xi_3)|+1)^{-\alpha}.
\end{align*}
Then  from
  $\widehat{f}(\xi,\xi_3)=( |(\xi,\xi_3)|+1)^{-\alpha} ( |(\xi,\xi_3)|+1)^{ \alpha} \widehat{f}(\xi,\xi_3)$, we observe that (\ref{j994})  is equivalent to  the unboundedness of $T$
 from $L^2(\mathbb{R}^3)$ to $L^2(\mathbb{R}^3)$. As $m$ is a continuous multiplier,  it suffices to show that for all $ -1/4> \alpha>-1/2$,
\begin{align}\label{jf1}
&\|T\|_{L^2(\mathbb{R}^3)\rightarrow L^2(\mathbb{R}^3)}\nonumber\\
&\qquad=\sup_{(\xi,\xi_3)\in\mathbb{R}^3}\left|\int_0^{2\pi} e^{2\pi i (\xi,\xi_3)\cdot (\cos\theta,\sin\theta,(1/2)  \sin 2  \theta)} d\theta( |(\xi,\xi_3)|+1)^{-\alpha}\right|=\infty.
\end{align}
Assume the contrary, i.e.,  there  is $C>0$ such that  for all $(\xi,\xi_3)\in \mathbb{R}^3$, 
\begin{align}\label{jf2}
\left|\int_0^{2\pi} e^{2\pi i (\xi,\xi_3)\cdot (\cos\theta,\sin\theta,(1/2)  \sin 2  \theta)} d\theta\right|\le C ( |(\xi,\xi_3)|+1)^{\alpha}\le C |(\xi,\xi_3)|^{\alpha}.
\end{align}
We shall find a contradiction.
Let   $F_{\mathfrak{e}}(\theta)=(\cos \theta,\sin \theta, \frac{1}{2}\sin 2\theta)\cdot \mathfrak{e}$ with $\mathfrak{e}\in S^2$. Find  $\mathfrak{e}$ and the number $\theta=\theta_0$  satisfying  that 
$$F^{(k)}_{\mathfrak{e}}(\theta)\bigg|_{\theta=\theta_0}=0\ \text{for $k=1,2,3$,  i.e.},\ \left(\begin{matrix}-\sin\theta&\cos\theta&\cos 2\theta\\ 
-\cos\theta&-\sin\theta& -2\sin 2\theta\\
\sin\theta&-\cos\theta& -4\cos 2\theta\end{matrix}\right)\bigg|_{\theta=\theta_0}\mathfrak{e}={\bf 0}. $$ 
Obviously, one can check that the above function $ F_{\mathfrak{e}}(\theta)=(\cos \theta,\sin \theta, \frac{1}{2}\sin 2\theta)\cdot \mathfrak{e}$ with  $$\mathfrak{e}=\frac{(1,1,-1/\sqrt{2})}{\sqrt{5/2}}\ \text{and}\ \theta_0=\pi/4$$
satisfies the above equation.   
For $\theta_0=\pi/4$,
From $F^{(k)}_{\mathfrak{e}}(\theta_0)=0$ for $k=1,2,3$, we express  the analytic function
$$F_\mathfrak{e}(\theta)-F_{\mathfrak{e}}(\theta_0)=c_4(\theta-\theta_0)^4+c_5(\theta-\theta_0)^5+\cdots$$
Thus we obtain that
\begin{align}\label{jf3}
\int_0^{2\pi} \psi\left( \frac{F_{\mathfrak{e}}(\theta) -  F_{\mathfrak{e}}(\theta_0)}{\epsilon} \right)d\theta\gtrsim \left|\left\{\theta\in [0,2\pi]: |  F_\mathfrak{e}(\theta)-F_{\mathfrak{e}}(\theta_0) |<\epsilon\right\}\right|\gtrsim \epsilon^{1/4} \end{align} 
By applying the Fourier inversion formula and switching the order of the integral,
\begin{align}\label{jf4}
&\int_0^{2\pi} \psi\left(\frac{ F_{\mathfrak{e}}(\theta)-F_{\mathfrak{e}}(\theta_0)}{\epsilon}\right)d\theta\nonumber\\
&\qquad=\int_{\mathbb{R}}\epsilon \widehat{\psi}(\epsilon \lambda)e^{-2\pi i \lambda F_{\mathfrak{e}}(\theta_0)}\left[\int_0^{2\pi } e^{2\pi   \lambda F_{\mathfrak{e}}(\theta)}d\theta\right]d\lambda
\end{align}
By (\ref{jf2}) with $\lambda=|\xi|$ and $\xi/|\xi|=\mathfrak{e}$ for $F_{\mathfrak{e}}(\theta)=(\cos \theta,\sin \theta, \frac{1}{2}\sin 2\theta)\cdot \mathfrak{e}$, we have
\begin{align*}
RHS\ \text{of (\ref{jf4})} \le C\int_{\mathbb{R}}|\epsilon \widehat{\psi}(\epsilon \lambda)|  \, |\lambda|^\alpha d\lambda=O(\epsilon^{-\alpha})\ \text{where $1/4<-\alpha<1/2$}
\end{align*}
while from (\ref{jf3}),
\begin{align*}
c\epsilon^{1/4}\le LHS\ \text{of (\ref{jf4})} =RHS   \ \text{of (\ref{jf4})} =O(\epsilon^{-\alpha}).
\end{align*}
This is a contradiction for $0<\epsilon\ll 1$. Thus (\ref{jf2}) is not true. Hence we obtain (\ref{jf1}).
\end{proof}

\section{Lower Bounds in Main Theorem \ref{main1}}\label{sec9}
\begin{lemma}
 Suppose  $A\in M_{2\times 2}(\mathbb{R})$. Then there is a constant $c>0$ independent of $A$ so that
\begin{align}\label{nv32}
\|\mathcal{M}^\delta_{S^1(A)}\|_{L^2(\mathbb{R}^3)\rightarrow L^2(\mathbb{R}^3)}\ge c
(\log 1/\delta)^{1/2}.
\end{align}
\end{lemma}
\begin{proof}
Let $e(\theta)=(\cos\theta,\sin\theta)$. We take $f(x,x_3)=\psi(x/\delta)\psi(x_3/C)$ for a large $C>0$ bigger than $10+\max|a_{ij}|$ of $A=(a_{ij})$.  Let
$B:=\{(x,x_3):10\delta\le |x|\le 1, |x_3|\le 1\}$. For each $(x,x_3)\in B$, we choose $t=|x|$ and estimate
\begin{align*}
 \mathcal{A}^{\delta}_{S^1(A)}f(x,x_3,t) & =  \frac{1}{2\pi \delta}\int_{S^1_{\delta}}
f\left(x- ty,x_3 - \langle A(x), t y \rangle \right)dy\approx \frac{1}{2\pi \delta}\times[(\delta/|x|) \times \delta]\approx \frac{\delta}{|x|}
\end{align*}
because,  for each $(x,x_3)\in B$,  the   integral  can be evaluated as the sublevel set measure,
\begin{align*}
&|\{y\in S^1_\delta: |x-|x|y|\le \delta, |x_3-A(x,|x|y)|\le C\}|\\
& \qquad\qquad\approx   |\{e(\theta)\in S^1: |x-|x|e(\theta)|\le \delta\}|\times  \delta\approx  (\delta/|x|)\times \delta.
\end{align*}
Thus
\begin{align*}
\int_{B}| \mathcal{A}^{\delta}_{S^1(A)}f(x,x_3,|x|) |^2dxdx_3&\gtrsim   \int_{(x,x_3)\in B}  (\delta/|x|)^2 dx dx_3\\
&\approx C\delta^2 \int_{10\delta<|x|<1} |x|^{-2}dx\approx C\delta^2\log (1/\delta) 
\end{align*}
while 
$$\int |f(x,x_3)|^2dxdx_3\approx C\delta^2.$$
Therefore we have the lower bound $(\log 1/\delta)^{1/2}$ for the maximal average $\mathcal{M}_{S^1(A)}^\delta$.
\end{proof}
This lemma gives the lower bound  $(\log 1/\delta)^{1/2}$ for the case  $\text{rank}((JA)+(JA)^T)=2$,  and for the case $\text{rank}(A)\le 1$ in Main Theorem \ref{main1}. The following lemma gives the lower bound for  $\text{rank}((JA)+(JA)^T)=1$ with $\text{rank}(A)=2$ in Main Theorem \ref{main1}.
\begin{lemma}
Recall $\mathcal{M}^\delta_{S^1(A)} $ in (\ref{0g}). Suppose that  
$\text{rank}((JA)+(JA)^T)=1$ and $\text{rank}(A)=2$.  Then there exists $c>0$ such that
\begin{eqnarray}
\| \mathcal{M}^\delta_{S^1(A)}\|_{L^p(\mathbb{R}^3)\rightarrow L^p(\mathbb{R}^p)}&\ge
c \delta^{1/3-1/p}.\label{100932}
 \end{eqnarray}
If $p=2$, this     is the lower bound  $\delta^{-1/6}$  of Main Theorem \ref{main1} for  the case $\text{rank}((JA)+(JA)^T)=1$   and $\text{rank}(A)=2$.
 \end{lemma}
\begin{proof}[Proof of (\ref{100932})]
In view of Proposition \ref{lem10044} and Lemma \ref{lem2.2}, it suffices to regard $A$ as $I_c=\left(\begin{matrix}
1&c\\
0&1
\end{matrix}\right)$.  Let $(x,x_3)\in \mathbb{R}^2\times\mathbb{R}$. Then our average  is given by
\begin{align*}
 \mathcal{A}^{\delta}_{S^1(A)}f(x,x_3,t) & =  \frac{1}{2\pi \delta}\int_{S^1_{\delta}}
f\left(x- ty,x_3 - \langle I_c x, t y \rangle \right)dy\nonumber \\
&= \frac{1}{2\pi \delta}\int_{S^1_{\delta}}
f\left(x-ty,x_3 -  \langle x, t y \rangle -cx_2 ty_1\right)dy.
\end{align*}
By the change of variable  $f(x,x_3)=\tilde{f}(x,x_3-|x|^2/2)$, it suffices to work with 
\begin{align}
 \mathcal{A}^{\delta}_{S^1(A)}f(x,x_3,t) & =  \frac{1}{2\pi\delta}\int_{S^1_{\delta}}
\tilde{f}\left(x- ty,x_3-\frac{|x|^2}{2} -\frac{|ty|^2}{2}-cx_2ty_1 \right)dy.\label{4004}
\end{align}
Change of variable $x_3-|x|^2/2\rightarrow x_3$ on the last component, and 
set $B:=\{(x,x_3): |x_1|\le 5,\ |x_2|\le \delta^{1/3}\ \text{and}\ 1\le x_3 \le 2\} $. 
Choose 
   \begin{align}\label{func}
\tilde{f}(u_1,u_2,u_3)&:=\psi\left(\frac{u_1}{10}\right)\psi\left(\frac{u_2}{10\delta^{1/3}}\right)\psi\left(\frac{u_3}{10\delta}\right).
 \end{align} 
 We  express $y\in S^1_{\delta}$ in (\ref{4004}) as $ y=e(\theta)+O(\delta)$ for  $\theta\in [0,2\pi]$ where $e(\theta)=(\cos \theta,\sin \theta)$.  By  taking the measure of the thicknes $\delta$,  for $(x,x_3)\in B$ and $|t|\approx 1$, it holds that $\mathcal{A}^{\delta}_{S^1(A)}f(x,x_3,t)$ is bounded away from
\begin{align*} 
 & \frac{1}{2\pi} \int_{0}^{2\pi}
\tilde{f}\left(x_1-t\cos \theta +O(\delta), x_2-t\sin\theta+O(\delta),x_3 - \frac{t^2 }{2}-cx_2t\cos \theta+O(\delta) \right)d\theta
\\
&\quad\qquad\ge \frac{1}{2\pi}  \int_{0}^{\delta^{1/3}}
\tilde{f}\left(x_1-t  +O(\delta^{2/3}), x_2-t\theta+O(\delta),x_3 -\frac{ t^2}{2} -cx_2t +O(\delta) \right)d\theta 
\end{align*}
where we used $|x_2|\le \delta^{1/3}$, $\cos \theta=1-O(\theta^2)$ and $\sin\theta=\theta+O(\theta^3)$ for $|\theta|\le \delta^{1/3}$.
 Next choose $t=t(x,x_3)$ for each $(x,x_3)\in B$ satisfying
  $$  x_3-t^2/2-cx_2t=0.$$
For this $t=t(x,x_3)$ which is $\approx 1$, from the support condition of $B$ and (\ref{func}),  it holds  that the above integral (a lower bound of $\mathcal{A}^{\delta}_{S^1(A)}f(x,x_3,t)$) where $\tilde{f}=1$,    is bounded away from
$
  \frac{\delta^{1/3}}{2\pi}\ \text{if $(x,x_3)\in B$}.
$
From this lower bound  combined with the measure $|B| \ge  \delta^{1/3}$, we see that for $t=t(x,x_3)$ chosen as above,
 $$\|\mathcal{M}^\delta_{S^1(A)} f\|_{L^p(\mathbb{R}^3)}^p\ge \int_B |\mathcal{A}^{\delta}_{S^1(A)}f(x,x_3,t(x,x_3))|^pdx dx_3\ge   \delta^{1/3}   \left( \frac{\delta^{1/3}}{2\pi}\right)^p$$
 while 
 $$\|f\|_{L^p(\mathbb{R}^3)}^p=  \int |\tilde{f}(x,x_3)|^p dxdx_3\le 10^3 \delta^{1/3}\delta.$$
 Therefore, we obtain the desired lower bound as
 \begin{eqnarray*} 
\| \mathcal{M}^\delta_{S^1(A)}  \|^p_{L^p(\mathbb{R}^3)\rightarrow L^(\mathbb{R}^3)} 
\gtrsim \frac{\delta^{p/3}}{\delta}.
\end{eqnarray*}
This implies (\ref{100932}).
    \end{proof}
 
    The following lemma gives the lower bound for the case that $\text{rank}((JA)+(JA)^T)=0$ with $\text{rank}(A)=2$ in Main Theorem \ref{main1}.

\begin{lemma}
   If
$\text{rank}((JA)+(JA)^T)=0$ and $\text{rank}(A)=2$, then there is a constant $c>0$ such that
\begin{align}\label{nv32}
\|\mathcal{M}^\delta_{S^1(A)}\|_{L^p(\mathbb{R}^3)\rightarrow L^p(\mathbb{R}^3)}\ge c
 \delta^{-1/p}\ \text{for all $1\le p<\infty$ }
\end{align}
When $p=2$, this is  the lower bound $\delta^{-1/2}$ of Main Theorem \ref{main1} for the case $\text{rank}((JA)+(JA)^T)=0$ and $\text{rank}(A)=2$. 
\end{lemma}
\begin{proof}[Proof of (\ref{nv32})]
For this case    $A =I$ from (1-4) in Lemma \ref{lem10044}. Set $f(x,x_3)=\tilde{f}(x,x_3-\frac{1}{2}|x|^2)$ 
as 
\begin{eqnarray*}
 \mathcal{A}^{\delta}_{S^1(A)}f\left(x,x_3,t \right)&=&\frac{1}{2\pi\delta}\int_{S^1_{\delta}} f \left(x-ty,x_3 -\langle I(x), ty\rangle \right)dy 
 \\
 &=&  \frac{1}{2\pi \delta}\int_{S^1_{\delta}} \tilde{f}\left(x-ty,x_3-  \frac{1}{2}|x|^2-\frac{1}{2}|ty|^2   \right)dy.  
\end{eqnarray*}
Change of variable $x_3-|x|^2/2\rightarrow x_3$ on the last component, and set a region $B =\left\{(x,x_3):\    |x|\le
1, \
1\le x_3\le 2 \right\}  $. Next,  take
 \begin{align}\label{func4}
\tilde{f}(u_1,u_2,u_3)=\psi\left(u_1/10\right)\psi\left(u_2/10\right)
\psi\left(\frac{u_3}{100\delta}\right).\end{align}
 For each $(x,x_3)\in B $,
choose $t$ such that  
$
\frac{1}{2}t^2= x_3 
$
so that
\begin{align}
 x_3
-\frac{1}{2}|ty|^2 = x_3(1-|y|^2) \le 10\delta\ \text{where}\ y\in S^1_{\delta}. \label{ddta4}
\end{align}
This implies that for $t=\sqrt{2x_3}$ in (\ref{ddta4}) on the support condition  (\ref{func4}),   it holds that
 \begin{align*}
  \mathcal{A}^{\delta}_{S^1(A)}f(x,x_3,t)\ge 1
  \ \text{for $(x,x_3)\in B$ with $B$ in (\ref{func4}).
}
   \end{align*}
Hence  it holds that
 \begin{eqnarray}\label{pbe3}
 \|\mathcal{M}^\delta_{S^1(A)} f\|_{L^p(\mathbb{R}^3)}^p\ge \int | \mathcal{A}^{\delta}_{S^1(A)}f(x,x_3,\sqrt{2x^3})|^p
dxdx_3\ge 1.
\end{eqnarray}
From (\ref{func4}), we have $\| f \|^p_{L^p(\mathbb{R}^3)}\le  10^4\delta.
$
This combined with (\ref{pbe3}) implies that  $$
\| \mathcal{M}^\delta_{S^1(A)}\|^p_{L^p(\mathbb{R}^3)\rightarrow
L^p(\mathbb{R}^3)}\gtrsim \delta^{-1}
$$ which gives the desired lower bound for (\ref{nv32}).\end{proof}

\subsection{Final Remark}
To extend the result of the main theorem 1 to the higher dimension, we need to  generalize   $ \text{rank}\left(JA+(JA)^T\right)$. This number is related with  the multiplicities of  real eigenvalues of $A$  in Proposition \ref{lem10044}.  In our sub-sequential paper, we shall obtain the range of $p$ for $ \mathcal{M}_{S^{d-1}(A)}  $ to be bounded in $L^p(\mathbb{R}^{d+1})$  in terms of the multiplicities of  real eigenvalues of $A$ under the assumption that $A$ is diagonalizable.

\section{Appendix}\label{sec10}
\subsection{Proof of (\ref{kcc})}
To  prove   (\ref{kcc}).  We first observe that  $d\sigma_0$ is majorized by a rapidly decreasing function. So, we can replace $\sigma_0(y)$ with $\psi(y)$. From this combined with  the relation (\ref{0ch}),    it suffices to prove the $L^p$ boundedness of  the mapping $f\rightarrow \sup_{j\in\mathbb{Z}}|f|*_A \mu_j$, where the convolution is defined by   \begin{align*}
|f|*_A\mu_j(x,x_{d+1}): 
 = \int_{\mathbb{R}^d}  |f|\left(x-y,x_{d+1}-\langle A(x-y), y\rangle\right)\psi\left(\frac{y}{2^j}\right) \frac{1}{ 2^{dj}} dy. 
 \end{align*}
  Before dealing with the maximal average associated with $\mu_j$, we  treat the convolution with a less singular measure $\nu_j$ defined as
 \begin{align*}
&f*_A \nu_j(x,x_{d+1})\\
&\qquad: 
= \int_{\mathbb{R}^d}  f\left(x-y,x_{d+1}-(y_{d+1}+\langle A(x-y),y\rangle )\right)\psi\left(\frac{y}{2^j}\right) \frac{1}{ 2^{dj}} \psi\left(\frac{y_{d+1}}{2^j}\right)  \frac{1}{2^{2j}}dydy_{d+1}.
 \end{align*}
By change of variables, it can be expressed as 
 \begin{align*}
 \int_{\mathbb{R}^d}  f\left(y,y_{d+1}+\langle A(y),y\rangle\right)\psi\left(\frac{x-y}{2^j}\right) \frac{1}{ 2^{dj}} \psi\left(\frac{x_{d+1}-(y_{d+1}+\langle A(y),x\rangle)}{2^j}\right)  \frac{1}{2^{2j}}dydy_{d+1}.
 \end{align*} 
Then we have the well known result as
 \begin{lemma}  For any $A\in M_{d\times d}(\mathbb{R})$, there exists $C>0$ independent of  $f\in L^1(\mathbb{R}^{d+1})$ such that
$$ \left\|\sup_{j\in\mathbb{Z}} |f|*_A\nu_j\right\|_{L^{1,\infty}(\mathbb{R}^{d+1})}\le C \|f\|_{L^1(\mathbb{R}^{d+1})}.$$
An interpolation with  its $L^\infty(\mathbb{R}^{d+1})$ bound  yields the $L^p(\mathbb{R}^{d+1})$ bound  for $1<p<\infty$.
 \end{lemma}
 \begin{proof}
Let $r>0$ and   $B$ be a  bilinear form $(x,y)\rightarrow B(x,y)=\langle x,A(y)\rangle$. In view of the integral kernel of $f*_A\nu_j$ above,  with  a non-isotropic dilation $(y,y_{d+1})\rightarrow (ry_1,\cdots,ry_d,r^2y_{d+1})$ and a variable hyperplanes $ (x,x_{d+1})+\{(y,B(x,y)):y\in\mathbb{R}^d\}$, we define a ball centered at $(x,x_{d+1})$ with a radius $r$ as
 $$D_r(x,x_{d+1})=\{(y,y_{d+1}):|y-x|<r, |y_{d+1}-(x_{d+1}-B(x,y) |<r^2\}.$$
  We define
the maximal average associated with these balls  as \begin{align*}
\mathcal{N}(f)(x)=\sup_{r>0}\frac{1}{|D_r(x,x_{d+1})|}\int_{D_r(x,x_{d+1})} |f(y,y_{d+1})|dydy_{d+1}.
\end{align*}
Then, from the  expression of $f*_A\nu_j$   and $\mathcal{U}_Af(y,y_{d+1})=f(y,y_{d+1}+\langle A(y),y\rangle)$ in (\ref{0ch}), it holds that
\begin{align}\label{40p}
\sup_{j\in\mathbb{Z}} |f|*_A \nu_j(x,x_{d+1})\le C \mathcal{N}(\mathcal{U}_Af)(x,x_{d+1}).
\end{align}
  Moreover, the  following three properties hold   regarding the   balls $D_r(x,x_{d+1})$, 
 \begin{itemize}
 \item[(1)] $|D_r(x,x_{d+1})|=2c_dr^{d+2}$.
 \item[(2)] $r_1\le r_2$ implies $D_{r_1}(x,x_{d+2})\subset  D_{r_2}(x,x_{d+1})$.
 \item[(3)] Let   $\tilde{D}_r(x,x_{d+1})= \bigcup_{ D_r(y,y_{d+1})\cap D_r(x,x_{d+1})\ne \emptyset }D_r(y,y_{d+1})  $.   Then there exists $C>0$ independent of a center $(x,x_{d+1})$ and a dimension $d$ such that $$\tilde{D}_r(x,x_{d+1})\subset D_{Cr}(x,x_{d+1}). $$
 \end{itemize} 
One can obtain   (1) and (2) evidently with   $c_d$  the volume of a unit ball in $\mathbb{R}^d$. We give a proof of    (3),  which is a   Vitali-type covering property for the non-isotropic and  non-Euclidean  balls.
 \begin{proof}[Proof of (3)]
  Let $z\in \tilde{D}_r(x,x_{d+1})$. Then in view of the definition of $\tilde{D}_r(x,x_{d+1})$, there exists $(y,y_{d+1})$ such that $z\in D_r(y,y_{d+1})$ and
  $   D_r(y,y_{d+1})\cap D_r(x,x_{d+1})\ne \emptyset $. Take $w\in D_r(y,y_{d+1})\cap D_r(x,x_{d+1})$. Then $ |y-w|,|w-x|<r\ \text{and}\ |z-y|<r$. This implies $ |z-x|<3r<Cr$.
Rewrite
 \begin{align*}
 z_{d+1}&-(x_{d+1}-B(x,z)) = [ z_{d+1}-(y_{d+1}-B(y,z) ]+[(y_{d+1}-B(y,w))-w_{d+1}]\\
 &+[w_{d+1}-(x_{d+1}-B(x,w)]+[    +B(x,z)-B(y,z)+B(y,w)-B(x,w)]
 \end{align*}
 where the first three terms are controlled by $O(r^2)$ and the last term $[   B(x,z)-B(y,z)+B(y,w)-B(x,w)]=-B(x-y,w-z)$ satisfies $|B(x-y,w-z)|\le \|A\| |x-y| |w-z|\le \|A\| 4r^2$. Then
 we obtain that 
 $$  |z_{d+1}-(x_{d+1}-B(x,z))|< 3r^2+4\|A\| r^2<(Cr)^2.  $$
 Hence we obtain $z\in  D_{Cr}(x,x_{d+1}) $   with radius $Cr$ for $C=2\sqrt{3+\|A\|}$.
 \end{proof}
We can obtain the weak type (1,1) boundedness of the maximal operator $\mathcal{N}$  by utilizing   the standard argument, based on the three properties (1)-(3) analogous to those of the Euclidean balls defining the Hardy-Littlewood maximal function.  
 \end{proof}

  \begin{lemma}\label{lemi22}   For any $d\times d$ matrix $A$, there exists $C$  independent of $f$ such that
\begin{align}\label{i22}
 \left\|\sup_{j\in\mathbb{Z}} |f|*_A \mu_j\right\|_{L^{p}(\mathbb{R}^{d+1})}\le C  \|f\|_{L^p(\mathbb{R}^{d+1})}\ \text{for all $f\in L^p(\mathbb{R}^{d+1})$  for $1<p<\infty$.}
 \end{align}
 \end{lemma}
 In order to prove Lemma \ref{lemi22}, we can replace $\psi$ by   $\varphi$ supported in $|y|\approx 1$  for defining $\mu_j$ in the above. Let $f\in \mathcal{S}(\mathbb{R}^{d+1})$. Then for each  fixed $\lambda\in \mathbb{R}$, we 
define $$ \widehat{f}^{d+1}(x,\lambda)=\int e^{-2\pi i \lambda x_{d+1}} f(x,x_{d+1})dx_{d+1}$$   as the Fourier transform of $f$ with respect to  the single variable $x_{d+1}$.  To restrict the  frequency $\lambda$ as $\lambda 2^{2j}\approx 1$, we set
 $$f*_AQ_j(x,x_{d+1})=\int  e^{2\pi i\lambda x_{d+1}}\chi(\lambda 2^{2j})  \widehat{f}^{d+1}(x,\lambda)d\lambda.
 $$
 so that $\sum_{j\in\mathbb{Z}}  f*_AQ_j=f$. For this case, we observe that $*_A$ is same as the Euclidean convolution $*$.  
 We can assume that $f\ge 0$ in Lemma \ref{lemi22}.
 Split $$  f*_A\mu_j=\left[\sum_{\ell<0}  f*_AQ_{j+\ell}*_A\mu_j\right]+\left [\sum_{\ell\ge 0}  f*_AQ_{j+\ell}*_A(\mu_j-\nu_j)\right]+\left[\sum_{\ell\ge 0}  f*_AQ_{j+\ell}*_A\nu_j\right]. $$
Let  $ U_j:=\sum_{\ell\ge 0}Q_{j+\ell} $.  Since
 $f*_A\left|U_j\right| (x,x_{d+1})$ is majorized by the Hardy-Littlewood maximal operator along the $x_{d+1}$ axis,  the composition of the two bounded maximal operator in $L^p$ gives  
 $$\left\|\sup_j  f*_A\sum_{\ell\ge 0} Q_{j+\ell}*_A\nu_j  \right\|_{L^p}\le \left\| \sup_j\left(\sup_{j} |f|*_A |U_j|\right)*_A \nu_j\right\|_{L^p}\le C \left\|   f\right\|_{L^p}. $$
 Next, we can obtain Lemma \ref{lemi22}  by showing the following two square sum estimates:
\begin{align}
 \left\|\left(\sum_{j\in\mathbb{Z}} \left|f*_A Q_{j+\ell}* _A\mu_j  \right|^2\right)^{1/2}\right\|_{L^p(\mathbb{R}^{d+1})}
\lesssim 2^{-c|\ell|}\|f\|_{L^p(\mathbb{R}^{d+1})}\  \text{for $\ell<0$}, \label{i23}\\
 \left\|\left(\sum_{j\in\mathbb{Z}} \left| f  *_A Q_{j+\ell}* _A(\mu_j-\nu_j) \right|^2\right)^{1/2}\right\|_{L^p(\mathbb{R}^{d+1})}
\lesssim 2^{-c|\ell|}\|f\|_{L^p(\mathbb{R}^{d+1})}\  \text{for $\ell\ge 0$}.\label{i24}
 \end{align}
For $p=2$, we first take the Fourier transform of $f*_AQ_{j+\ell}*_A \mu_j   $ and $f *_AQ_{j+\ell}*_A\nu_j  $
along $x_{d+1}$ axis. Then we need to show the uniform $L^2$ estimate in $\lambda$ for the square sum of 
\begin{align*}
M_j^\lambda f(x)&=\chi(\lambda 2^{2(j+\ell)})\int_{\mathbb{R}^d}e^{2\pi i\lambda A(y)\cdot (x-y)} \varphi_{2^{j}}(x-y)   \widehat{f}^{d+1}(y,\lambda)dy\ \text{and}\ \\
N_j^\lambda f(x)&=\chi(\lambda 2^{2(j+\ell)})\widehat{\psi}(\lambda 2^{2j})\int_{\mathbb{R}^d}e^{2\pi i\lambda A(y)\cdot (x-y)} \varphi_{2^{j}}(x-y)   \widehat{f}^{d+1}(y,\lambda)dy.
\end{align*}

 \begin{proof}[Proof of (\ref{i23}) for $p=2$]
For this case $\lambda 2^{2j}\approx 2^{-2\ell}\ge 1$ with $\ell<0$. Then we can prove the decay estimate 
 $\|[M_j^{\lambda}]^*M_j^{\lambda}\|_{op} =O(1/(\lambda 2^{2j})^{\epsilon})=O( 2^{-2\epsilon |\ell|}).$
For a fixed $\ell$, we can sum these estimates over $j$ because $ \chi(\lambda 2^{2(j+\ell)})\ne 0 $ for at most five $j$'s. Hence we can
obtain (\ref{i23}) for $p=2$.
 \end{proof}
  \begin{proof}[Proof of (\ref{i24}) for $p=2$]
We obtain (\ref{i24}) for $p=2$  from the mean value theorem estimate
 $\|M_j^{\lambda}-N_j^{\lambda}\|_{op} =O(\lambda 2^{2j})=O( 2^{-2  |\ell|})$ and sum those over $j$ because  $ \chi(\lambda 2^{2(j+\ell)})\ne 0 $ for at most five $j$'s.
 \end{proof}
 \begin{proof}[Proof of (\ref{i23}) and (\ref{i24}) for $p<2$]
  The case $p=2$ of  (\ref{i23}) and (\ref{i24})  shows the $L^2$ boundedness of the maximal function $f\rightarrow \sup_j\mu_j*f$. 
 Let   $\tilde{p}_0=2/(1+1/p_0)$.  To treat $p<2$, we utilize the following property
 \begin{align}\label{i91}
&  \left\|\sup_{j\in\mathbb{Z}}|f| * \mu_j\right\|_{L^{p_0}}\lesssim  \|f\|_{L^{p_0}} \nonumber\\
  &\qquad\Rightarrow  \left\|\left(\sum |f_j|*\mu_j|^2\right)^{1/2}\right\|_{L^p}\lesssim\left\|\left(\sum |f_j|^2\right)^{1/2}\right\|_{L^p}\ \text{for $p>  \tilde{p}_0$}
 \end{align}
together with the Littlewood-Paley inequality for $f_j=f * Q_j$ above such that
 \begin{align*} 
   \left\|\left(\sum |f * Q_j|^2\right)^{1/2}\right\|_{L^p}\lesssim\left\|f\right\|_{L^p} \ \text{for $1<p<\infty$}.
 \end{align*}
Take $p_0=2$ in (\ref{i91}). This leads the vector valued inequality (\ref{i91}) for $p>\tilde{p}_0$, which proves (\ref{i23}) and (\ref{i24}) for the same range $p$. Thus we obtain the $L^p$ boundedness of $f\rightarrow \sup_j |f|*\mu_j$  for $p>\tilde{p}_0=4/3$ for $p_0=2$. This again implies  (\ref{i91}) for $p>\tilde{p}_1$ with $ p_1$ near $4/3$. By repeating this argument, we    cover the full range of $p>1$.
 \end{proof}

\subsection{Proof of Lemma \ref{prop23} }\label{app11.2}
  For   $(j,k)\in\mathbb{Z}_+\times\mathbb{Z}$,   we show (\ref{yb1}),
\begin{align*} 
\left\|2^{k/p}\mathcal{T}_{m_{j,k}}\right\|_{L^p(\mathbb{R}^{{d+1}})\rightarrow L^p(\mathbb{R}^{{d+1}}\times\mathbb{R})} = \left\|\mathcal{T}_{m_{j,0}}\right\|_{L^p(\mathbb{R}^{{d+1}})\rightarrow L^p(\mathbb{R}^{{d+1}}\times\mathbb{R})} 
\end{align*} 
  Recall $\mathcal{T}_{m_{j,k}}f(x,x_{d+1},t)$ in (\ref{3hf}) where 
\begin{align} \label{i7}
m_{j,k}(x,x_{d+1},t,\xi,\xi_{d+1})&=e^{2\pi i t |\xi+\xi_{{d+1}}A(x)|}   \chi\left(\frac{t|\xi+\xi_{{d+1}}A(x)|}{2^j} \right)\chi(2^kt) .  \end{align}
Given $p\ge 1$, we set the dilations as
\begin{align}
[\mathcal{D}^{p}_{2^{-k}}g](x,x_{d+1},t)&=2^{-(d+3)k/p}g(2^{-k}x,2^{-2k}x_{d+1},2^{-k}t),\label{4bv}\\
[D^{p}_{2^{-k}}f](x,x_{d+1})&=2^{-(d+2)k/p}f(2^{-k}x,2^{-2k}x_{d+1})\nonumber
\end{align}
satisfying the following two  $L^p$-invariance conditions:
\begin{align}\label{nk}
\|g\|_{L^p(\mathbb{R}^{{d+1}}\times \mathbb{R})}=
\|\mathcal{D}^{p}_{2^{-k}}g\|_{L^p(\mathbb{R}^{{d+1}}\times
 \mathbb{R})}\ \text{and}\ \|f\|_{L^p(\mathbb{R}^{{d+1}})}=
 \|D_{2^{-k}}f\|_{L^p(\mathbb{R}^{{d+1}})}.
 \end{align}
In (\ref{i7}),   we can check that
\begin{align}\label{i0}
m_{j,k}(2^{-k}x,2^{-2k}x_{d+1},2^{-k}t,2^{k}\xi,2^{2k}\xi_{d+1})= m_{j,0}( x, x_{d+1}, t,\xi,\xi_{d+1}).
\end{align}
Then     it holds that
\begin{align}
&\mathcal{D}^{p}_{2^{-k}}[2^{k/p} [\mathcal{T}_{m_{j,k}}] f](x, x_{d+1},t)=   \mathcal{T}_{m_{j,0}} D^p_{2^{-k}} f(x,x_{d+1},t).
\label{nka}
\end{align} 

\begin{proof}[Proof of (\ref{nka})]
We show (\ref{nka}) for the symbols $m_{j,k}$ satisfying   (\ref{i0}).  We recall   (\ref{3hf}),  
\begin{align*}
\mathcal{T}_{m_{j,k}}  f(x,x_{d+1},t)&=   \int
e^{2\pi i (x,x_{d+1})\cdot(\xi,\xi_{d+1}) } m_{j,k}( x, x_{d+1}, t,\xi,\xi_{d+1})
\widehat{f}(\xi,\xi_{d+1})d\xi d\xi_{d+1}. 
\end{align*}
By using $ \mathcal{D}^{p}_{2^{-k}}$ in (\ref{4bv}), we write
\begin{align*}
 \mathcal{D}^{p}_{2^{-k}}[2^{k/p}\mathcal{T}_{m_{j,k}}  f](x, x_{d+1},t)&= 2^{-(d+2)k/p}      \int
e^{2\pi i\left((2^{-k}x,2^{-2k}x_{d+1})\cdot(\xi,\xi_{d+1}) \right)}\nonumber\\
&\times m_{j,k}(2^{-k}x,2^{-2k}x_{d+1},2^{-k}t, \xi, \xi_{d+1})
\widehat{f}( \xi, \xi_{d+1})d\xi d\xi_{d+1}.
 \end{align*}
Apply the change of variable $ (\xi,\xi_{d+1}) \rightarrow (2^{k}\xi,2^{2k}\xi_{d+1}) $. Then
the above integral becomes
\begin{align*}
&   2^{-(d+2)k/p}    2^{(d+2)k/p}  \int
e^{2\pi i\left((2^{-k}x,2^{-2k}x_{d+1})\cdot(2^k\xi,2^{2k}\xi_{d+1}) \right)}\nonumber\\
&\times m_{j,k}(2^{-k}x,2^{-2k}x_{d+1},2^{-k}t, 2^k\xi, 2^{2k}\xi_{d+1})
 2^{(d+2)k/p'} \widehat{f}(2^k\xi, 2^{2k}\xi_{d+1})d\xi d\xi_{d+1} 
 \end{align*}
where $   2^{(d+2)k/p'} \widehat{f}(2^k\xi, 2^{2k}\xi_{d+1}) =\left[D^p_{2^{-k}} f\right]^{\vee}
 (\xi,\xi_{d+1}) $ due to  (\ref{4bv}).  This with (\ref{i0}) implies
\begin{align*}
 & \mathcal{D}^{p}_{2^{-k}}[2^{k/p}\mathcal{T}_{m_{j,k}}  f](x, x_{d+1},t)\\
 &\qquad\quad= \int
 e^{2\pi i\left((x,x_{d+1})\cdot(\xi,\xi_{d+1}) \right)}
m_{j,0}(x,x_{d+1},t, \xi, \xi_{d+1}) \left[D^p_{2^{-k}} f\right]^{\vee}
 (\xi,\xi_{d+1})d\xi d\xi_{d+1}
 \end{align*}
which is $ \mathcal{T}_{m_{j,0}} D^p_{2^{-k}} f(x,x_{d+1},t).$  This implies (\ref{nka}).  \end{proof}
  From (\ref{nka}) with (\ref{nk}),  it holds that
  \begin{align*} 
\left\|2^{k/p}\mathcal{T}_{m_{j,k}}\right\|_{L^p(\mathbb{R}^{{d+1}})\rightarrow L^p(\mathbb{R}^{{d+1}}\times\mathbb{R})} =\left\|\mathcal{T}_{m_{j,0}}\right\|_{L^p(\mathbb{R}^{d+1})\rightarrow L^p(\mathbb{R}^{d+1} \times \mathbb{R})}.
\end{align*} 
Therefore,
we have completed the proof of Lemma \ref{prop23}.

\subsection{The operator norm of $\mathcal{T}_j^\lambda $ for the case $2^j/\lambda\not \approx 1$ for the case $\text{rank}(A)<d$}
In Proposition \ref{pr02}, we treated the estimates of $\|\mathcal{T}_j^\lambda\|_{op}$ for the case $2^j\gg \lambda$ and $2^j\ll \lambda$ when $\text{rank}(A)= d$. We now deal with the case $\text{rank}(A)<d$ as we mentioned in Remark \ref{rek1}.
\begin{proposition}\label{prop111} Suppose   $\det(A)=k<d$. Then  that there exists $C\gg 1$ such that
\begin{align}\label{ihd5}
 \left\|\mathcal{T}_j^\lambda   \right\|_{L^2(\mathbb{R}^d)\rightarrow L^2(\mathbb{R}^{d+1}) }&\lesssim  \begin{cases} 1\   \ \text{ if $\left|\frac{2^j}{\lambda}\right|\ge C$, }\\
\left(\frac{2^j}{\lambda}\right)^{k}  \lambda^{k/2}   \ \text{ if $\left|\frac{2^j}{\lambda}\right|\le 1/C$.}
\end{cases}
\end{align}
\end{proposition}
\begin{proof}[Proof of (\ref{ihd5})] Since $\det(A)=k$, there exists a nonsingular matrix $R\in M_{d\times d}(\mathbb{R})$ (composition of column operations)  such that
 $AR(x)=(E(x_1,\cdots,x_k),{\bf 0})$ where $E\in M_{k\times k}(\mathbb{R})$ with $\text{rank}(E)=k$ and $\det(R)=1$. Set $$G(x_1,\cdots,x_k,\xi)=|(R^{-1})^*\xi+(E(x_1,\cdots,x_k),{\bf 0})|. $$
Apply the   change of variable  $\xi\rightarrow (R^{-1})^*\xi$ for the integral $\mathcal{T}_j^\lambda g(x,t)$ in (\ref{5rpp}). Then $\mathcal{T}_{j}^{\lambda}g(Rx,t )$ can be written as
  \begin{align*}
& \lambda^{d/2} \int_{\mathbb{R}^d}
e^{2\pi i\lambda \left(\langle Rx, (R^{-1})^*\xi\rangle+tG(x_1,\cdots,x_k,\xi)\right)}
  \chi\left(t\right)  \chi\left(\frac{ tG(x_1,\cdots,x_k,\xi)}{2^j/\lambda}\right)  \widehat{g
 }( (R^{-1})^*\xi)d\xi 
 \end{align*}
 where  $\langle Rx, (R^{-1})^*\xi\rangle=\langle x,\xi\rangle.$ 
 To each $(\xi_{k+1},\cdots,\xi_d)$, we assigne an operator  defined as
 \begin{align}
 \left[\mathcal{S}^{(\xi_{k+1},\cdots,\xi_d)}h\right](x_1,\cdots,x_k,t)&= \lambda^{k/2} \int_{\mathbb{R}^d}
e^{2\pi i\lambda \left(\langle  (x_1,\cdots,x_k),(\xi_1,\cdots,\xi_k)\rangle+tG(x_1,\cdots,x_k,\xi)\right)}
 \nonumber \\
  &\times  \chi\left(t\right)  \chi\left(\frac{ tG(x_1,\cdots,x_k,\xi)}{2^j/\lambda}\right)  h(\xi_1,\cdots,\xi_k)d\xi_1\cdots d\xi_k.\label{0bb}\end{align}
Here we take $h$ as a function  $g^{\xi_{k+1},\cdots,\xi_d}(\xi_1,\cdots,\xi_k)=\widehat{g
 }( (R^{-1})^*\xi)$ in the above. Then 
  we can express   $\mathcal{T}_{j}^{\lambda}g(Rx,t )$ in the above as 
  \begin{align}
\mathcal{T}_{j}^{\lambda}g(Rx,t ) &= \lambda^{(d-k)/2} \int_{\mathbb{R}^{d-k}}e^{2\pi i\lambda \langle (x_{k+1},\cdots,x_d),(\xi_{k+1},\cdots,\xi_d)\rangle}\nonumber\\
&\times \left[\mathcal{S}^{(\xi_{k+1},\cdots,\xi_d)}g^{(\xi_{k+1},\cdots,\xi_d)}\right](x_1,\cdots,x_k,t) d\xi_{k+1}\cdots d\xi_d.\label{ihd4}
 \end{align} 
We claim that  
 \begin{align}\label{ihd}
 \sup_{\xi_{k+1},\cdots,\xi_d} \| \mathcal{S}^{(\xi_{k+1},\cdots,\xi_d)}  \|_{L^2(\mathbb{R}^k)\rightarrow L^2(\mathbb{R}^{k+1}) } =\begin{cases} O(1)\ \text{if $2^j\gg \lambda$}\\
 \left(\frac{2^j}{\lambda}\right)^{k}  \lambda^{k/2} \ \text{if $2^j\ll \lambda$.}
 \end{cases}  
 \end{align}
 \begin{proof}[Proof of (\ref{ihd})]
As in (\ref{54ss}),  apply the dilation $(x_1,\cdots, x_k)\rightarrow (2^{j}/\lambda)(x_1,\cdots,x_k)$ and the change of variables  $ (\xi_1,\cdots,\xi_d)\rightarrow 2^j (\xi_1,\cdots,\xi_d)$ in (\ref{0bb}). Define
  \begin{align*} 
 \mathcal{S}_{\rm{dilate}}^{(\xi_{k+1},\cdots,\xi_d)}  h(x_1,\cdots,x_k,t)& :=\left(\frac{2^j}{\lambda}\right)^{k/2} \left[\mathcal{S}^{(2^{j}/\lambda)(\xi_{k+1},\cdots,\xi_d)}h\right]((2^{j}/\lambda)x_1,\cdots,(2^{j}/\lambda)x_k,t) 
 \end{align*}
 which is written as
\begin{align}
\left(\frac{2^j}{\lambda}\right)^{k}  \lambda^{k/2}  \int_{\mathbb{R}^k}
&e^{2\pi i\lambda \left(\frac{2^j}{\lambda}\right)^2  \left[    \langle (x_1,\cdots, x_k), (\xi_1,\cdots,\xi_k)\rangle+  \left(\frac{\lambda}{2^j}\right)tG(x_1,\cdots,x_k,\xi) \right]  }\nonumber\\
&\quad  \times  \chi\left(t\right)\chi\left(  tG(x_1,\cdots,x_k,\xi)\right)h(  \xi_1,\cdots,\xi_k)d\xi_1\cdots d\xi_k.\label{ikd1}
\end{align}
Then 
\begin{align}
\sup_{\xi_{k+1},\cdots,\xi_d} \left\|\mathcal{S}^{(\xi_{k+1},\cdots,\xi_d)}\right\|_{L^2(\mathbb{R}^k)\rightarrow L^2(\mathbb{R}^{k}\times\mathbb{R}) }&=\sup_{\xi_{k+1},\cdots,\xi_d} \left\|\mathcal{S}^{\left(\frac{2^j}{\lambda}\right)(\xi_{k+1},\cdots,\xi_d)}\right\|_{L^2(\mathbb{R}^k)\rightarrow L^2(\mathbb{R}^{k}\times\mathbb{R}) } \nonumber \\
 &=\sup_{\xi_{k+1},\cdots,\xi_d} \left\|\mathcal{S}_{\rm{dilate}}^{(\xi_{k+1},\cdots,\xi_d)}\right\|_{L^2(\mathbb{R}^k)\rightarrow L^2(\mathbb{R}^{k}\times\mathbb{R}) }. \label{ikd3}
\end{align} 
In (\ref{ikd1}), we decompose
\begin{align*} 
\chi\left(  tG(x_1,\cdots,x_k,\xi)\right) =\sum_{m\in \mathbb{Z}^d; \ |\epsilon m|\approx 1}\chi_m(t,x_1,\cdots,x_k,\xi) 
\end{align*}
where $ (R^{-1})^*\xi+(E(x_1,\cdots,x_k),{\bf 0}) $ is localized in a finer portion as
$$\chi_m(t,x_1,\cdots,x_k,\xi):= \psi\left( \frac{t[(R^{-1})^*\xi+(E(x_1,\cdots,x_k),{\bf 0}) ]-\epsilon   m}{\epsilon}\right)\chi\left(  tG(x_1,\cdots,x_k,\xi)\right) $$ supported in 
$$  |t[(R^{-1})^*\xi+E(x_1,\cdots,x_k),{\bf 0}) ]-\epsilon m| \le\epsilon. $$
 Since there are finitely many $m\in \mathbb{Z}^d$ satisfying $|\epsilon m|\approx 1$, it suffices to fix one $m\in\mathbb{Z}^d$.
The phase function of (\ref{ikd1}) is
$$\phi^{(\xi_{k+1},\cdots,\xi_{d})}(x_1,\cdots,x_k,t,\xi_1,\cdots,\xi_k)= \langle (x_1,\cdots, x_k), (\xi_1,\cdots,\xi_k)\rangle+  \left(\frac{\lambda}{2^j}\right)tG(x_1,\cdots,x_k,\xi). $$ 
Let $\lambda\ll 2^j$. Then   by using the fact $ t|(R^{-1})^*\xi+E(x_1,\cdots,x_k),{\bf 0}) |\gtrsim 1$ on the support of (\ref{ikd1})  and  the multilinearity of $\det$,  we can compute that  
$$ \det \left[ [\phi^{(\xi_{k+1},\cdots,\xi_{d})}]''_{(x_1\cdots x_{k})(\xi_1\cdots \xi_k)}(x,t,\xi)\right]= 1+  O\left(\frac{\lambda}{2^j}\right) \approx1. $$
  Thus we are able to apply the H\"{o}rmander Theorem to the operator $\mathcal{S}_{\rm{dilate}}^{(\xi_{k+1},\cdots,\xi_d)}$. For this we need to check the localization $|(\xi_1,\cdots,\xi_k)-(\eta_1,\cdots,\eta_k)|\le \epsilon$     from the factor
  $$\chi_m(t,x_1,\cdots,x_k,\xi_1,\cdots,\xi_k,\xi_{k+1},\cdots,\xi_d)\chi_m(t,x_1,\cdots,x_k,\eta_1,\cdots,\eta_k,\xi_{k+1},\cdots,\xi_d)$$   of the kernel
 $L(\xi_1,\cdots,\xi_k,\eta_1,\cdots,\eta_k)$ of $[\mathcal{S}_{\rm{dilate}}^{(\xi_{k+1},\cdots,\xi_d)}]^*[\mathcal{S}_{\rm{dilate}}^{(\xi_{k+1},\cdots,\xi_d)}]$.
 Therefore
\begin{align*}
\sup_{\xi_{k+1},\cdots,\xi_d} \left\|\mathcal{S}_{\rm{dilate}}^{(\xi_{k+1},\cdots,\xi_d)}\right\|_{L^2(\mathbb{R}^k)\rightarrow L^2(\mathbb{R}^{k}\times\mathbb{R}) }   
&\lesssim\left(\frac{2^j}{\lambda}\right)^{k}  \lambda^{k/2} \left(\left(\frac{2^j}{\lambda}\right)^2\lambda\right)^{-k/2} =1 
\end{align*} 
Let $\lambda\gg 2^j$. Then compute the size of $dx_1\cdots dx_k$ and $d\xi_1\cdots d\xi_k$ to obtain that
\begin{align*}
 \sup_{\xi_{k+1},\cdots,\xi_d} \left\|\mathcal{S}_{\rm{dilate}}^{(\xi_{k+1},\cdots,\xi_d)}\right\|_{L^2(\mathbb{R}^k)\rightarrow L^2(\mathbb{R}^{k}\times\mathbb{R}) }  
&\lesssim\left(\frac{2^j}{\lambda}\right)^{k}  \lambda^{k/2}   
\end{align*} 
The above two bound in (\ref{ikd3}) lead (\ref{ihd}).
  \end{proof}
 In (\ref{ihd4}), we
fix $x_1,\cdots,x_k$ and apply the Plancherel Theorem with respect to $x_{k+1},\cdots,x_d$. Then  we obtain 
$$\int |\mathcal{T}_{j}^{\lambda}g(Rx,t )|^2 dx_{k+1}\cdots dx_d= \int \left[\mathcal{S}^{(\xi_{k+1},\cdots,\xi_d)}g^{(\xi_{k+1},\cdots,\xi_d)}\right]^2(x_1,\cdots,x_k,t) d\xi_{k+1}\cdots d\xi_{d}.  $$
 Integrate both sides above with respect to $dx_1,\cdots dx_kdt$.  Next change the order of integration on the RHS.  Then we apply the operator norm in (\ref{ihd})  to obtain the desired bound $ \|\mathcal{T}^{\lambda}_j \|_{L^2(\mathbb{R}^d)\rightarrow L^2(\mathbb{R}^{d}\times\mathbb{R}) }   $ to finish the proof of (\ref{ihd5}). \end{proof}
Note that Remark \ref{rek1} follows from the  first part of (\ref{ihd5}).

\subsection{Proof of Lemma \ref{prop1}}
We recall the Littlewood-Paley  projection $\mathcal{P}_jf$ in (\ref{10061}), 
\begin{align*}
\mathcal{P}_jf(y,y_{{d+1}})&=\int_{(\eta,\eta_{d+1})\in \mathbb{R}^d\times \mathbb{R}}e^{2\pi i (\eta,\eta_{{d+1}})\cdot (y,y_{{d+1}}) }P_j(\eta+\eta_{d+1}A(y),\eta_{d+1})\widehat{f}(\eta,\eta_{{d+1}})d\eta d\eta_{{d+1}}
\end{align*}
where $P_j(\eta,\eta_{d+1})=\psi\left( \frac{\eta}{2^{j+1}},\frac{\eta_{d+1}}{2^{2(j+1)}}\right)- \psi\left( \frac{\eta}{2^{j}},\frac{\xi_{d+1}}{2^{2j}}\right)$.
To prove Lemma  \ref{prop1}, we shall show 
\begin{align}
\left\| 2^{k/2}\mathcal{T}_{m_{j,k}}\mathcal{P}_{j+k+\ell} \right\|_{ L^2(\mathbb{R}^{d+1})\rightarrow L^2(\mathbb{R}^{d+1}\times \mathbb{R})} &\lesssim 2^{-c|\ell|}  \ \text{for $|\ell|\ge 100d   j$,} \label{045b}
\end{align}
 and
 \begin{align}
\left\|  \mathcal{P}_{k_1} \mathcal{P}_{k_2}^*  \right\|_{ L^2(\mathbb{R}^{d+1})\rightarrow L^2(\mathbb{R}^{d+1})} &\lesssim   2^{-c|k_1-k_2|}.  \label{0444}
\end{align}
\noindent
   \begin{proof}[Proof of (\ref{045b})]
  We apply the dilation (\ref{4bv}) to the function $ \mathcal{P}_{j+\ell+k}f$ and obtain  the identity
\begin{align*} 
D^p_{2^{-k}} \mathcal{P}_{j+\ell+k}f(y,y_{d+1})=\mathcal{P}_{j+\ell}D^p_{2^{-k}} f(y,y_{d+1}).
\end{align*}
Apply $\mathcal{D}^{p}_{2^{-k}}[2^{k/p}  \mathcal{T}_{m_{j,k}}]  =   \mathcal{T}_{m_{j,0}} D^p_{2^{-k}}$ in  (\ref{nka}) first and the above indentity next to obtain  
\begin{align*}
&\mathcal{D}^{p}_{2^{-k}}[2^{k/p}  \mathcal{T}_{m_{j,k}} \mathcal{P}_{j+k+\ell}f] =  [\mathcal{T}_{m_{j,0}} D^p_{2^{-k}} \mathcal{P}_{j+k+\ell}f] 
= \mathcal{T}_{m_{j,0}} \mathcal{P}_{j+\ell}D^p_{2^{-k}} f.
\end{align*}
From this with  the $L^2$ norm invariance of (\ref{nk}),   in order to show  (\ref{045b}),   it suffices  to prove  that 
\begin{align}\label{100645}
\left\|  \mathcal{T}_{m_{j,0}}\mathcal{P}_{j+\ell} f \right\|_{L^2(\mathbb{R}^{d+1}\times \mathbb{R})}&\lesssim 2^{-c|\ell|}   \|f\|_{L^2(\mathbb{R}^{d+1})}\ \text{for  $|\ell|\ge 100 d   j$}.  \end{align}
Recall
\begin{align*} 
m_{j,0}(x,x_{d+1},t,\xi,\xi_{d+1})&=e^{2\pi i t |\xi+\xi_{{d+1}}A(x)|} \chi(t)  \chi\left(\frac{t|\xi+\xi_{{d+1}}A(x)|}{2^j} \right).  \end{align*}
Split $m_{j,0}=a_{j,\ell}+b_{j,\ell}$
\begin{align*} 
a_{j,\ell} (x,x_{d+1},t,\xi,\xi_{d+1})&=e^{2\pi i t |\xi+\xi_{{d+1}}A(x)|} \chi(t)  \chi\left(\frac{t|\xi+\xi_{{d+1}}A(x)|}{2^j} \right)  \psi\left(\frac{|\xi_{d+1}|}{ 2^{|\ell|/10} }\right),\\
b_{j,\ell} (x,x_{d+1},t,\xi,\xi_{d+1})&=e^{2\pi i t |\xi+\xi_{{d+1}}A(x)|} \chi(t)  \chi\left(\frac{t|\xi+\xi_{{d+1}}A(x)|}{2^j} \right) \left(1- \psi\left(\frac{|\xi_{d+1}|}{ 2^{|\ell|/10}  }\right)\right).\end{align*}
We   prove (\ref{100645})  from   the two separate estimates for   $m_{j,0}=b_{j,\ell}\ \text{and}\ a_{j,\ell}$ below.
\end{proof}
\noindent   {\bf Proof of (\ref{100645}) for $m_{j,0}=b_{j,\ell}$}.  We show that
\begin{align} \label{1085}
\left\|  \mathcal{T}_{b_{j,\ell}} f \right\|_{L^2(\mathbb{R}^{d+1}\times I)} \lesssim 2^{-c|\ell|}   \|f\|_{L^2(\mathbb{R}^{d+1})}\ \text{for all $|\ell|\ge 100 d j$.} 
\end{align} 
\begin{proof}[Proof of (\ref{1085})]
The last variable $\xi_{d+1}$ of the above symbol  $b_{j,\ell}$ corresponds to $\lambda$ in $\mathcal{T}^\lambda_j$ of (\ref{5rpp}).  Thus from the support condition $|\lambda|\ge 2^{|\ell|/10}$ and $|\ell|\ge 100dj$, we are able to apply the second case of (\ref{ihd5}) to obtain that
\begin{align*} 
\left\|\mathcal{T}^{\lambda}_j\right\|_{L^2(\mathbb{R}^d)\rightarrow L^2(\mathbb{R}^{d}\times\mathbb{R}) } =    O(2^{jk}/\lambda^{k/2}) = O(2^{-c|\ell|})   
\end{align*}
 which leads (\ref{1085}). 
 \end{proof}
  It suffices to prove  (\ref{100645}) for $m_{j,0}=a_{j,\ell}$.
 \\
 {\bf Proof of (\ref{100645}) for $m_{j,0}=a_{j,\ell}$}. 
It suffices to fix $t=1$ in (\ref{i7}) and prove 
\begin{align} \label{100645p}
\left\|  \mathcal{T}_{a_{j,\ell}}\mathcal{P}_{j+\ell} f(\cdot,\cdot,1) \right\|_{L^2(\mathbb{R}^{d+1})} \lesssim 2^{-c|\ell|}   \|f\|_{L^2(\mathbb{R}^{d+1})}\ \text{for  $|\ell|\ge 100 d j$.} 
\end{align}
Our proof  is based on the argument of  M. Christ in \cite{Ch}, where he facilitated the non-isotropic dilations combined with the cancellation property of the singular kernels  in  the nilpotent Lie groups.
For this purpose,   we obtain  the   kernel representation by using the Fourier inversion formula in the Euclidean space as it follows.
We write
\begin{align*}
\mathcal{T}_{a_{j,\ell}}f(x,x_{d+1},1)&=\int f\left(x-y,x_{d+1}-y_{d+1}-\langle A(x),y\rangle \right)K_{j,\ell}(y,y_{d+1})dydy_{d+1}\\
&=f\cdot_A K_{j,\ell}(x,x_{d+1})
\end{align*}
where
\begin{align}\label{92s}
K_{j,\ell}(x,x_{d+1})=\left(  e^{2\pi i|\cdot|} \chi\left(\frac{|\cdot |}{2^j} \right) \right)^{\vee}(x)2^{|\ell|/10} \psi^{\vee}(2^{|\ell|/10}x_{d+1}). 
 \end{align}
 In view of (\ref{10061}) in Definition \ref{de31},  we write
\begin{align*}
\mathcal{P}_{j+\ell} f (x,x_{d+1})&=\int f\left(x-y,x_{d+1}-y_{d+1}-\langle A(x),y\rangle \right)P_{j+\ell}(y,y_{d+1})dydy_{d+1}\\
&=f\cdot_A P_{j+\ell}(x,x_{d+1})
\end{align*}  where
 \begin{align}\label{10057}
 P_{j+\ell}(x,x_{d+1})&=2^{(j+\ell+1)d}2^{2(j+\ell+1)}  \psi^{\vee}(2^{j+\ell+1}x, 2^{2(j+\ell+1)}x_{d+1}) \nonumber\\
 &-
 2^{(j+\ell)d}2^{2(j+\ell)}  \psi^{\vee}(2^{j+\ell}x, 2^{2(j+\ell)}x_{d+1}).
 \end{align}
 We shall prove (\ref{100645p}) by Lemma \ref{lem599}-\ref{lem602} below.
 \begin{lemma}\label{lem599}
 Suppose that the two operators $\mathcal{T}_{a_{j,\ell}}$ and $\mathcal{P}_{j+\ell} $ have the integral kernels $K_{j,\ell}$ and $P_{j+\ell}$ in (\ref{92s}) and (\ref{10057}).   
Then, these two kernels satisfy the cancellation property:
\begin{align}\label{10055}
\int K_{j,\ell}(x,x_{d+1})dxdx_{d+1}=\int P_{j+\ell}(x,x_{d+1})dxdx_{d+1}=0.
\end{align}
Let  $\{e(\theta_k)\}$  be a set of equally distributed vectors in $S^{d-1}$ and let $e^{\perp}(\theta_k)$'s be the $(d-1)$ different  unit vectors perpendicular to each  $e(\theta_k) $.  Then it holds that 
    \begin{align}\label{10056}
 | K_{j,\ell}(x,x_{d+1})|\lesssim &\sum_{k=1  }^{2^{j(d-1)/2}} \frac{2^j }{(|(x-e(\theta_k)) \cdot e(\theta_k) 2^{j}|+1)^N}  \frac{2^{j/2}}{(|(x  \cdot e^{\perp}(\theta_k) 2^{j/2}|+1)^N} \nonumber \\
 &\times 2^{|\ell|/10}\psi^{\vee}(2^{|\ell|/10}x_{d+1}) \end{align}
  having   its $L^1$ norm $O(2^{j(d-1)/2})$.
  The support of $P_{j+\ell}$ in (\ref{10057}) is concentrated on
  \begin{align}\label{10058}
  \text{ $\{(y,y_{d+1}): |(2^{j+\ell}y,2^{2(j+\ell)}y_{d+1})|\lesssim1\}$  with its
  $L^1$ norm $O(1)$.   }
  \end{align}
  \end{lemma}
    \begin{proof}
The cancellation  in (\ref{10055}) follows from $\widehat{K_j}(0)=\widehat{P_{j+\ell}}(0)=0$ where  $\ \widehat{}\ $ indicates the Euclidean Fourier transform.  We derive the inequality  (\ref{10056}) by the decomposing of the frequency variables $\xi$ along angular sectors with the angle
  width $2^{-j/2}$ and gaining the decay along the sectors. Finally, we can verify (\ref{10058}) in view of (\ref{10057}).
  \end{proof}
  \begin{lemma}\label{lem600}
   Let the two bilinear operations $(f,g)\rightarrow f\cdot_A g$ and $(f,g)\rightarrow f*_Ag$ be defined as in (\ref{0ch}). Then
\begin{align}\label{0cc}
(f\cdot_A P)\cdot_A K=f\cdot_A( P*_{-A^T}K).
\end{align}
   Moreover,
our  composition operator is exprssed as
\begin{align}\label{kkgg}
\mathcal{T}_{a_{j,0}}\mathcal{P}_{j+\ell}f(\cdot,1) =
f\cdot_A(P_{j+\ell}*_{-A^T}K_{j,\ell})(\cdot).
\end{align}
This can be represented as  $$ \mathcal{T}_{a_{j,0}}\mathcal{P}_{j+\ell} f(x,x_{d+1},1)=\int f\left(x-y,x_{d+1}-y_{d+1}-\langle A(x),y\rangle \right)U_{j,\ell}(y,y_{d+1})dydy_{d+1}$$ 
where $U_{j,\ell}(y,y_{d+1})=P_{j+\ell}*_{-A^T}K_{j,\ell}(y,y_{d+1})$ is written as
\begin{align}\label{sspp}
& \int P_{j+\ell}\left(y-z,y_{d+1}-z_{d+1}+\langle  A^T (y-z),z\rangle \right)K_{j,\ell}(z,z_{d+1})dzdz_{d+1}\\
 &= \int K_{j,\ell}\left(y-z,y_{d+1}-z_{d+1}+\langle  A (y-z),z\rangle \right)P_{j+\ell}(z,z_{d+1})dzdz_{d+1} 
\nonumber
\end{align}
\end{lemma}
 
  \begin{proof}
From   $f\cdot_A g =\mathcal{U}_{-A^T}(\mathcal{U}_{A^T}f*_{-A^T}g)$ in   (\ref{0ch}),
\begin{align*}
(f\cdot_A P)\cdot_A K&=\mathcal{U}_{-A^T}[\mathcal{U}_{A^T}(f\cdot_A P)*_{-A^T}K]\\
&=\mathcal{U}_{-A^T}[\mathcal{U}_{A^T}[ \mathcal{U}_{-A^T}(\mathcal{U}_{A^T}f*_{-A^T} P)]  *_{-A^T}K]\\
&=\mathcal{U}_{-A^T}[ (\mathcal{U}_{A^T}f*_{-A^T} P)  *_{-A^T}K]= \mathcal{U}_{-A^T}[ \mathcal{U}_{A^T}f*_{-A^T} P  *_{-A^T}K]\\
& = \mathcal{U}_{-A^T}[ \mathcal{U}_{A^T}f*_{-A^T} (P  *_{-A^T}K)]=f\cdot_A (P  *_{-A^T}K)
\end{align*}
This implies (\ref{0cc}). Thus from (\ref{92s}) and (\ref{10057}) combined with (\ref{0cc}), it follows that
$$\mathcal{T}_{a_{j,0}}\mathcal{P}_{j+\ell}f(\cdot,1)=f\cdot_A P_{j+\ell}\cdot_A K_{j,\ell}(\cdot)=
f\cdot_A(P_{j+\ell}*_{-A^T}K_{j,\ell})(\cdot).$$
showing (\ref{kkgg}).
By (\ref{kkgg}) with $\cdot_A$ and $*_A$ in (\ref{0ch}), we obtain the first line of the expression (\ref{sspp}). The second lines follows from the change of variables. 
  \end{proof}
\begin{lemma}\label{lem602}
Let $|\ell| \ge 100d  j$.  Suppose that $U_{j,\ell}$ is defined in (\ref{sspp}). Then   \begin{align}\label{100301}
\int |U_{j,\ell}(y,y_{d+1})|dy dy_{d+1}\lesssim 2^{-c|\ell|}\ \text{for some $c>0$.}
\end{align}
\end{lemma}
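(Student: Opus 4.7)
I would combine the cancellation properties $\int K_j=0=\int P_{j+\ell}$ from (\ref{10055}) with the anisotropic smoothness of $K_j$ and $P_{j+\ell}$ recorded in (\ref{10056})--(\ref{10058}). Heuristically, the two kernels live at the essentially disjoint Fourier scales $2^j$ and $2^{j+\ell}$, and the cancellation realizes this disjointness at the level of the twisted convolution (\ref{sspp}) when $|\ell|$ is large. The argument naturally splits into $\ell\ge 100dj$ and $\ell\le -100dj$.

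\textbf{Case 1 ($\ell\ge 100dj$).} Here $P_{j+\ell}$ is concentrated on $\{|z|\lesssim 2^{-(j+\ell)},\,|z_{d+1}|\lesssim 2^{-2(j+\ell)}\}$ and $U_{j,\ell}$ itself is supported in a bounded set. Subtracting $K_j(y,y_{d+1})$ by cancellation of $P_{j+\ell}$,
\begin{align*}
U_{j,\ell}(y,y_{d+1}) = \int \bigl[K_j(y-z,y_{d+1}+\phi) - K_j(y,y_{d+1})\bigr]\,P_{j+\ell}(z,z_{d+1})\,dz\,dz_{d+1},
\end{align*}
with $\phi=-z_{d+1}+\langle A(y),z\rangle-\langle A(z),z\rangle$ of size $O(2^{-(j+\ell)})$. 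The mean value theorem and Fubini give
\begin{align*}
\|U_{j,\ell}\|_{L^1} \lesssim 2^{-(j+\ell)}\bigl(\|\nabla_y K_j\|_{L^1}+\|\partial_{y_{d+1}} K_j\|_{L^1}\bigr)\|P_{j+\ell}\|_{L^1}.
\end{align*}
From the decomposition (\ref{10056}) I would extract $\|\nabla_y K_j\|_{L^1}\lesssim 2^j \|K_j\|_{L^1}$ (radial direction dominates) and $\|\partial_{y_{d+1}} K_j\|_{L^1}\lesssim 2^{|\ell|/10}\|K_j\|_{L^1}$ with $\|K_j\|_{L^1}\approx 2^{j(d-1)/2}$ and $\|P_{j+\ell}\|_{L^1}=O(1)$. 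For $\ell\ge 100dj$ the polynomial losses are absorbed to give $\|U_{j,\ell}\|_{L^1}\lesssim 2^{-c\ell}$.

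\textbf{Case 2 ($\ell\le -100dj$).} Now $K_j$ is concentrated at unit scale while $P_{j+\ell}$ has scale $\sim 2^{|\ell|-j}$. A volume-preserving change of variable $w=y-z$, $w_{d+1}=y_{d+1}+\phi$ rewrites
\begin{align*}
U_{j,\ell}(y,y_{d+1}) = \int K_j(w,w_{d+1})\,P_{j+\ell}\bigl(y-w,\,y_{d+1}-w_{d+1}+\langle A(w),y\rangle-\langle A(w),w\rangle\bigr)\,dw\,dw_{d+1}.
\end{align*}
I would then use $\int K_j=0$ to subtract $P_{j+\ell}(y,y_{d+1})$ inside the integrand and apply the mean value theorem to $P_{j+\ell}$. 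The $L^1$ norms of the derivatives of $P_{j+\ell}$ satisfy $\|\nabla_y P_{j+\ell}\|_{L^1}\lesssim 2^{j+\ell}$ and $\|\partial_{y_{d+1}} P_{j+\ell}\|_{L^1}\lesssim 2^{2(j+\ell)}$, both exponentially small in $|\ell|$. The subtle contribution arises from the $y$-dependent term $\langle A(w),y\rangle$ in the $y_{d+1}$-displacement; bounding $|y|\lesssim 2^{|\ell|-j}$ on the support of $P_{j+\ell}$ and using Fubini, the corresponding integral is dominated by $2^{|\ell|-j}\cdot 2^{2(j+\ell)}=2^{j-|\ell|}$. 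Putting the pieces together against $\|K_j\|_{L^1}\approx 2^{j(d-1)/2}$ yields $\|U_{j,\ell}\|_{L^1}\lesssim 2^{j(d+1)/2-|\ell|}\lesssim 2^{-c|\ell|}$ under $|\ell|\ge 100dj$.

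\textbf{Main obstacle.} The hardest step is Case 2: the $y$-dependence of the phase correction $\phi$ breaks translation-invariance, so a plain MVT estimate is not enough, and one must carefully track $|y|$ on the support of $P_{j+\ell}$. The generous assumption $|\ell|\ge 100dj$ is precisely what is needed to absorb the polynomial loss $2^{j(d+1)/2}$ from the $L^1$ norm of $K_j$ and its derivatives into the exponential decay $2^{-|\ell|}$. A first-order Taylor expansion suffices in both regimes; no higher-order correction is needed, which is in line with Christ's approach on nilpotent groups referenced in the paper.
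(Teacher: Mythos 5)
Your argument is correct and follows essentially the same route as the paper: in both regimes you exploit the cancellation of one kernel ($P_{j+\ell}$ for $\ell>0$, $K_j$ for $\ell<0$ after the same unit-Jacobian change of variables), apply the mean value theorem to the other, and balance the anisotropic derivative losses against the displacement sizes, with $|\ell|\ge 100dj$ absorbing the polynomial factor $2^{j(d-1)/2}$ from $\|K_j\|_{L^1}$. Your treatment of the $y$-dependent correction $\langle A(w),y\rangle$ in Case 2, bounded via $|y|\lesssim 2^{|\ell|-j}$ on the support of $P_{j+\ell}$, is exactly the paper's estimate $2^{2j-2|\ell|}\cdot 2^{|\ell|-j}=2^{j-|\ell|}$.
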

\begin{proof}
{\bf Case 1}. Let $\ell>0$. Use $\int P_{j+\ell}(z,z_{d+1})dzdz_{d+1}=0$ and  the first line of (\ref{sspp}),
\begin{align*}
&U_{j,\ell}(y,y_{d+1})\\
&= \int \bigg[K_{j,\ell}\left(y-z,y_{d+1}-z_{d+1}+\langle  A (y-z),z\rangle \right)-K_{j,\ell}\left(y,y_{d+1}  \right)\bigg] P_{j+\ell}(z,z_{d+1})dzdz_{d+1}.   
\end{align*}
Note that   $K_{j,\ell}$ is supported essentially on  $S^{d-1}\times [-2^{-|\ell|/10},2^{-|\ell|/10}]$ in (\ref{10056}). From this we observe that
\begin{itemize}
\item  $|\nabla_y K_{j,\ell}(y,y_{d+1})|\lesssim 2^j\times |K_{j,\ell}(y,y_{d+1})|$  because of (\ref{92s})  
\item $|\nabla_{y_{d+1}} K_{j,\ell}(y,y_{d+1})|\lesssim  2^{|\ell|/10}\times |K_{j,\ell}(y,y_{d+1})|$  because of (\ref{92s})     
\item $|z|, |\langle  A(y-z),z\rangle|\lesssim 2^{-j-\ell} $ and $ |z_{d+1}|   \le 2^{-2j-2|\ell|}$ because of (\ref{10057}) and (\ref{10056}). 
\end{itemize}
By utilizing this condition with $y$ dominating over $z$,   we apply the mean value theorem   for   the function $K_{j,\ell}$  to obtain that
\begin{align}
 \int |U_{j,\ell}(y,y_{d+1})|dy dy_{d+1}&\lesssim \int  |2^{-j-\ell}2^j+2^{-j-\ell} 2^{|\ell|/10}|\times |\tilde{K}_{j,\ell}(y,y_{d+1})| dydy_{d+1}\nonumber\\
&\times\int |P_{j+\ell}(z,z_{d+1})|dzdz_{d+1}=O(2^{(d-1)j/2}2^{-8|\ell|/10})=O(2^{-|\ell|/2}).\label{100302}
\end{align}
where $\tilde{K}_{j,\ell}$ is a slight change of $K_{j,\ell}$ in  (\ref{92s}).\\
{\bf Case 2}. Let $\ell<0$. By subtracting from $U_{j,\ell}(y,y_{d+1})$,   the vanishing term
$$ \int K_{j,\ell}\left(z,z_{d+1} \right)  P_{j+\ell}\left(y,y_{d+1}  \right) dzdz_{d+1} =0,$$
we rewrite $U_{j,\ell}(y,y_{d+1})$ in the second line of (\ref{sspp}) as
 $$ \int K_{j,\ell}\left(z,z_{d+1} \right) \bigg[ P_{j+\ell}\left(y-z,y_{d+1}-z_{d+1}+  \langle A^T(y-z),z\rangle\right)- P_{j+\ell}\left(y,y_{d+1}  \right) \bigg]  dzdz_{d+1}.$$
Note that for $\ell<0$ in (\ref{10057}) and (\ref{10056}),
\begin{itemize}
\item  $|\nabla_y P_{j+\ell}(y,y_{d+1})|\lesssim 2^{j-|\ell|}\times |P_{j+\ell}(y,y_{d+1})|$
\item
  $|\nabla_{y_{d+1}} P_{j+\ell}(y,y_{d+1})|\lesssim 2^{2j-2|\ell|}\times |P_{j+\ell}(y,y_{d+1})|$, 
  \item $|z|\lesssim 1$,$|z_{d+1}|\lesssim 2^{- |\ell|/10}$,$|y-z|\lesssim 2^{-j+\ell}$ and $|\langle  A^T(y-z),z\rangle|\lesssim |y-z|\,|z|\lesssim 2^{-j+|\ell|}$.
\end{itemize}
We apply  the mean value theorem to obtain
\begin{align*}
&\bigg[P_{j+\ell}\left(y-z,y_{d+1}-z_{d+1}+\langle  A(z),y-z
\rangle \right)-P_{j+\ell}\left(y,y_{d+1} \right) \bigg]\\
&\qquad\qquad\lesssim \max\{ 2^{j-|\ell|},  2^{2j-2|\ell|} (2^{-j+|\ell|}+2^{-|\ell|/10})     \} \times P_{j+\ell}\left(y,y_{d+1} \right).    \end{align*}
Thus
\begin{align*}
  \int |U_{j,\ell}(y,y_{d+1})|dy dy_{d+1}& \lesssim  2^{j-|\ell|}\int  |P_{j+\ell}\left(y,y_{d+1} \right)|dydy_{d+1} \int |K_{j,\ell}(z,z_{d+1})| dzdz_{d+1}\\
  &\lesssim  2^{(d-1)\,j/2}2^j 2^{-|\ell|}=O(2^{-|\ell|/2}).
  \end{align*}
By this and (\ref{100302}) with $|\ell|>100d j$, we obtain (\ref{100301}).
\end{proof}
\noindent  
{\bf Almost orthogonality}.\ \ 
There remains  (\ref{0444}) in Lemma \ref{prop1}.
We claim that
\begin{align*} 
\|\mathcal{P}_{j}\mathcal{P}_{j+\ell}^*\|_{op}\lesssim 2^{-c|\ell|}.
\end{align*}
\begin{proof}
Notice $\mathcal{P}_{j+\ell}^*f=f*_AP_{j+\ell}$ and $(\mathcal{U}_AF)*_{-A^T}(\mathcal{U}_AG)=\mathcal{U}_A(F*_AG)$. Then
\begin{align*}
\mathcal{P}_{j}\mathcal{P}_{j+\ell}^*f&=(f*_AP_{j+\ell})\cdot_AP_j =\mathcal{U}_{-A}\left( [\mathcal{U}_A(f*_AP_{j+\ell})] *_{-A^T}P_j\right)\\
&=\mathcal{U}_{-A}   \mathcal{U}_A( f*_AP_{j+\ell}*_A \mathcal{U}_{-A}P_j )    = f*_AP_{j+\ell}*_A \mathcal{U}_{-A}P_j.
\end{align*}
Thus
$\mathcal{P}_{j}\mathcal{P}_{j+\ell}^*f(x,x_{d+1})=\int f(x-y,x_{d+1}-y_{d+1}-\langle A(x-y),y\rangle) U_{j,\ell}(y,y_{d+1})dydy_{d+1}$,
where  
\begin{align*}
&U_{j,\ell}(y,y_{d+1}) =P_{j+\ell}*_A \mathcal{U}_{-A}P_j(y,y_{d+1})= \mathcal{U}_{-A}P_j*_{A^T}P_{j+\ell}(y,y_{d+1})\\
&=
\int  P_j\left(y-z,y_{d+1}-z_{d+1}-\langle  A^T (y-z),z\rangle-\langle  (y-z),A(y-z)\rangle\right)P_{j+\ell}(z,z_{d+1})dzdz_{d+1}\\
&= \int  P_j\left(y-z,y_{d+1}-z_{d+1} -\langle  (y-z),A(y)\rangle\right)P_{j+\ell}(z,z_{d+1})dzdz_{d+1} . 
\end{align*}
It suffices to consider the case $\ell>0$. Using $\int P_{j+\ell}(z,z_{d+1})dzdz_{d+1}=0$, we
write $U_{j,\ell}(y,y_{d+1})$ in the above as $$ \int \bigg[P_j\left(y-z,y_{d+1}-z_{d+1}+\langle   (y-z),A(z)\rangle\right)-P_j(y,y_{d+1}) \bigg] P_{j+\ell}(z,z_{d+1})dzdz_{d+1}.   $$
Notice from (\ref{10058}), on the region
$
 \{(z,z_{d+1}): |(2^{j+\ell}z,2^{2(j+\ell)}z_{d+1})|\lesssim1\}   
$, it holds that
\begin{itemize}
\item  $|\nabla_y P_{j}(y,y_{d+1})|\lesssim 2^{j}\times |P_{j}(y,y_{d+1})|$ and 
  $|\nabla_{y_{d+1}} P_{j}(y,y_{d+1})|\lesssim 2^{2j }\times |P_{j+\ell}(y,y_{d+1})|$, 
  \item $|z|\lesssim 2^{-j-\ell}$,$|z_{d+1}|\lesssim  2^{-2j-2\ell}$, $|\langle  y-z, A(z)\rangle|\lesssim |z||y-z|\lesssim 2^{-j-\ell}2^{-j}$  
\end{itemize}
Using the mean value theorem as above, we obtain  $\int |U_{j,\ell}(y,y_{d+1})|dydy_{d+1}\lesssim 2^{-\ell}$.
 \end{proof}

\end{document}